\documentclass[10pt,twoside,a4paper,reqno]{amsart}
\usepackage{amsfonts,amsthm,latexsym,amsmath,amssymb,amscd,epsfig}
\usepackage{graphics,graphicx}

\theoremstyle{plain}
\newtheorem{theo+}           {Theorem}
\newtheorem{prop+}           {Proposition}
\newtheorem{coro+}           {Corollary}
\newtheorem{lemm+}           {Lemma}

\newtheorem*{3-conj}         {The 3-Conjecture}

\theoremstyle{definition}
\newtheorem{defi+}           {Definition}
\newtheorem{problem}         {Problem}
\newtheorem*{pb}             {Problem}

\newtheorem*{pb-prime}       {Problem 1$\mathbb{'}$}

\newtheorem{case}            {Case}
\newtheorem{not+}            {Notation}

\theoremstyle{remark}
\newtheorem{rema+}           {Remark}
\newtheorem*{expl1}          {Explanations to Figure~\ref{fig1}}
\newtheorem*{example}        {Example}

\newenvironment{theorem}{\begin{theo+}}{\end{theo+}}
\newenvironment{proposition}{\begin{prop+}}{\end{prop+}}
\newenvironment{corollary}{\begin{coro+}}{\end{coro+}}
\newenvironment{lemma}{\begin{lemm+}}{\end{lemm+}}
\newenvironment{remark}{\begin{rema+}}{\end{rema+}}
\newenvironment{definition}{\begin{defi+}}{\end{defi+}}
\newenvironment{notation}{\begin{not+}}{\end{not+}}

\newcommand{\al}{\alpha}

\newcommand{\la}{\lambda}
\newcommand{\La} {\Lambda}
\newcommand{\Si}{\Sigma}
\newcommand{\bC}{\mathbb C}
\newcommand{\bR}{\mathbb R}
\newcommand{\bN}{\mathbb N}
\newcommand{\bZ}{\mathbb Z}

\newcommand{\C}{\mathcal C}
\newcommand{\calO}{\mathcal O}

\newcommand{\SG}{\mathcal{SG}}
\newcommand{\eps}{\epsilon}

 \def\bar{\overline}

 \def\0{{\bf 0}}

 \def\1{{\bf 1}}

 \def\o{{\bf o}}

 \def\u{{\bf u}}
 \def\v{{\bf v}}
 \def\w{{\bf w}}
 \def\x{{\bf x}}

 \def\y{{\bf y}}

 \def\hs{\hspace*{\parindent}}
 
 \def\gl{{\rm GL}}
 
 \newcommand{\lan}{\langle}
 \newcommand{\ran}{\rangle}

 \def\proof{{\hs \bf Proof.\ }}

 \def\t{^{\rm t}}

 \def\F{\mathord{\mathbb F}}
 \def\R{\mathord{\mathbb R}}
 \def\C{\mathord{\mathbb C}}

 \def\N{\mathord{\mathbb N}}
 \def\O{\mathord{\mathbb O}}
 \def\P{\mathord{\mathbb P}}

\numberwithin{equation}{section}

\begin{document}

\title[Parametric Poincar\'e-Perron theorem with applications]
{Parametric Poincar\'e-Perron theorem with applications}
\author[J.~Borcea]{$\text{Julius Borcea}^{\dagger}$}\thanks{$\dagger$ J.~B. unexpectedly passed away  on April 8, 2009 at the age of 40.  We dedicate  this paper (started jointly with J.~B. in Spring 2004) to the memory of this talented and tragic  human being. Rest in peace, Julius.}
\author[S.~Friedland]{Shmuel Friedland}
\address{
 Department of Mathematics, Statistics and
 Computer Science, 
 University of Illinois at Chicago, 
 Chicago, Illinois 60607-7045}
 \email{friedlan@uic.edu}
\author[B.~Shapiro]{$\text{Boris Shapiro}^{*}$}\thanks{$*$ Corresponding author}
\address{Department of Mathematics, Stockholm University, SE-106 91 Stockholm,
   Sweden}
\email{shapiro@math.su.se}
\keywords{Asymptotic ratio distribution, maxmod-discriminants}
\subjclass[2000]{Primary 30C15; Secondary 42C05, 58K15, 58K20}

\begin{abstract}
We prove a parametric generalization of the classical Poincar\'e-Perron theorem on stabilizing recurrence relations
 where we assume  that the varying coefficients of a recurrence depend  on auxiliary parameters and converge uniformly 
 in these parameters to their limiting values.  As an application we study  convergence of the ratios  of families of functions 
 satisfying  finite recurrence relations with varying functional coefficients. For example, we explicitly  describe  the asymptotic ratio for two classes of biorthogonal polynomials introduced by Ismail and Masson.   \end{abstract}

\maketitle

\section{Introduction}\label{s1}

Consider  a usual linear recurrence relation of length $k+1$  with
constant coefficients
\begin{equation}\label{eq:Basic}
     u_{n+1}+\al_{1}u_{n}+\al_{2}u_{n-1}+\ldots+\al_{k}u_{n-k+1}=0,
\end{equation}
with $\al_{k}\neq 0$.

\begin{definition}\label{def4}
    The left-hand side of the equation
\begin{equation}\label{eq:Char}
	t^k+\al_{1}t^{k-1}+\al_{2}t^{k-2}+\ldots +\al_{k}=0
\end{equation}
is called the {\em characteristic polynomial} of
recurrence~\eqref{eq:Basic}. Denote the roots of~\eqref{eq:Char} by
$\la_{1},\ldots, \la_{k}$ and call them the {\em spectral numbers} of the
recurrence.
\end{definition}

The following simple theorem can  be found in e.g.~\cite[Ch.~4]{St}. 
 
\begin{theorem}\label{th:ordrec}
Let $k\in \bN$ and consider a $k$-tuple $(\al_{1},\ldots,\al_{k})$ of
complex numbers with $\al_{k}\neq 0$. For any function
$u:\bZ_+\to \bC$ the following conditions are equivalent:
\begin{enumerate}
\item[(i)] $\sum_{n\ge 0}u_{n}t^n=\frac {P(t)}{Q(t)}$,
where $Q(t)=1+\al_{1}t+\al_{2}t^2+\ldots+\al_{k}t^k$ and $P(t)$ is a
polynomial in $t$ whose degree is smaller than $k$.
\item[(ii)] For all $n\ge k-1$ the function $u_{n}$ satisfies  relation~\eqref{eq:Basic}.
\item[(iii)] For all $n\ge 0$ one has
\begin{equation}\label{eq:leadasymp}
	 u_{n}=\sum_{i=1}^rP_{i}(n)\la^n_{i},
\end{equation}
where $\la_{1},\ldots,\la_{r}$ are the distinct spectral numbers
of~\eqref{eq:Basic} with multiplicities $m_{1},\ldots,m_{r}$, respectively,
and $P_{i}(n)$ is a polynomial in the variable $n$ of degree at most
$m_{i}-1$ for $1\le i\le r$.
\end{enumerate}
\end{theorem}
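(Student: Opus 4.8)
The plan is to prove the two equivalences (i)\,$\Leftrightarrow$\,(ii) and (i)\,$\Leftrightarrow$\,(iii) separately, using the formal generating function as the common pivot. Set $F(t)=\sum_{n\ge 0}u_n t^n\in\bC[[t]]$ and keep $Q(t)=1+\al_1 t+\dots+\al_k t^k$ as in the statement, writing $\al_0=1$ for convenience. Two preliminary remarks will be used throughout: $Q(0)=1$, so $Q$ is a unit in $\bC[[t]]$ and division by $Q$ is legitimate at the level of formal power series; and $\al_k\neq 0$ forces $\deg Q=k$ exactly.

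\emph{(i)\,$\Leftrightarrow$\,(ii).} Expand $Q(t)F(t)=\sum_{m\ge 0}c_m t^m$, where $c_m=\sum_{j=0}^{\min(m,k)}\al_j u_{m-j}$. For $m\ge k$ this equals $u_m+\al_1 u_{m-1}+\dots+\al_k u_{m-k}$, which is precisely the left-hand side of \eqref{eq:Basic} with $n=m-1$; and the constraint $m\ge k$ corresponds exactly to $n\ge k-1$. Hence condition (ii) holds if and only if $c_m=0$ for all $m\ge k$, i.e. if and only if $Q(t)F(t)$ is a polynomial $P(t)$ of degree at most $k-1$. Dividing by $Q$ (a unit) gives (i); conversely (i) says $Q(t)F(t)=P(t)$ with $\deg P<k$, so all $c_m$ with $m\ge k$ vanish, which is (ii).

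\emph{(i)\,$\Leftrightarrow$\,(iii).} First relate $Q$ to the characteristic polynomial $\chi(t)=t^k+\al_1 t^{k-1}+\dots+\al_k$ via the identity $Q(t)=t^k\chi(1/t)$. Since $\chi(0)=\al_k\neq 0$, none of the spectral numbers $\la_i$ is zero, and factoring $\chi(t)=\prod_{i=1}^r(t-\la_i)^{m_i}$ yields $Q(t)=\prod_{i=1}^r(1-\la_i t)^{m_i}$ with $\sum m_i=k$. Assuming (i), the hypothesis $\deg P<k=\deg Q$ makes the partial fraction decomposition $P(t)/Q(t)=\sum_{i=1}^r\sum_{j=1}^{m_i}c_{ij}(1-\la_i t)^{-j}$ available; expanding $(1-\la t)^{-j}=\sum_{n\ge 0}\binom{n+j-1}{j-1}\la^n t^n$ and reading off the coefficient of $t^n$ gives $u_n=\sum_{i=1}^r P_i(n)\la_i^n$ with $P_i(n)=\sum_{j=1}^{m_i}c_{ij}\binom{n+j-1}{j-1}$, a polynomial of degree at most $m_i-1$, which is (iii). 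For the converse, the key point is that $\{\binom{n+j-1}{j-1}:1\le j\le m_i\}$ is a basis of the space of polynomials in $n$ of degree $<m_i$; writing each $P_i$ in this basis reverses the computation, so $\sum_n P_i(n)\la_i^n t^n$ is rational with denominator $(1-\la_i t)^{m_i}$ and numerator of degree $<m_i$, and summing over $i$ and clearing denominators produces $F(t)=P(t)/Q(t)$ with $\deg P<\sum m_i=k$, i.e. (i).

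\emph{Main obstacle.} The result is classical and carries no conceptual difficulty; the work is entirely in the bookkeeping. The two places that need genuine care are the index shift $n\leftrightarrow m-1$ together with the matching degree bound $\deg P<k$ in the first equivalence, and, in the second, verifying that the binomial coefficients $\binom{n+j-1}{j-1}$ really do form a basis of the relevant polynomial spaces — this is what upgrades the passage between (iii) and partial fractions from a one-way implication to an equivalence. The hypothesis $\al_k\neq 0$ is used essentially twice: to guarantee $\deg Q=k$, so that the partial fraction and degree counts are sharp, and to guarantee every $\la_i\neq 0$, so that $t=0$ is not a pole of $P/Q$ and the geometric series expansions above are valid.
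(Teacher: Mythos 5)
Your proof is correct. The paper offers no proof of this theorem at all --- it simply cites it as classical (Stanley, \emph{Enumerative Combinatorics}, Ch.~4), and the standard proof given there is exactly the generating-function argument you carry out: $Q(t)F(t)$ having vanishing coefficients in degrees $\ge k$ pivots between (i) and (ii), and partial fractions over the factorization $Q(t)=\prod_i(1-\la_i t)^{m_i}$ (valid since $\al_k\neq 0$) pivots between (i) and (iii). Your bookkeeping --- the index shift $n=m-1$, the identity $Q(t)=t^k\chi(1/t)$, and the observation that the binomial coefficients $\binom{n+j-1}{j-1}$, $1\le j\le m_i$, form a basis of the polynomials of degree $<m_i$, which makes the (iii)$\Rightarrow$(i) direction go through --- is all accurate.
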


The $k$-tuple $(u_0,...u_{k-1})$ which can be chosen arbitrarily is called the {\em initial $k$-tuple}. Denote the $k$-dimensional space of all initial $k$-tuples as $\bC^k$. 

\begin{definition}\label{def5}
Recurrence relation~\eqref{eq:Basic}  and its characteristic
polynomial~\eqref{eq:Char} are called {\em maxmod-generic}  if there exists a
unique and simple spectral number $\la_{max}$ of this recurrence satisfying
$|\la_{max}|=\max_{1\le i\le k}|\la_i|$.
Otherwise~\eqref{eq:Basic} and~\eqref{eq:Char} are called
{\em maxmod-nongeneric}.  The number
$\la_{max}$ will be referred to as the
{\em leading spectral number} of~\eqref{eq:Basic} or the {\em leading root}
of~\eqref{eq:Char}.
\end{definition}

\begin{definition}\label{df:slow}
    An initial $k$-tuple of complex numbers
     $(u_{0},u_{1},\ldots,u_{k-1})\in \bC^k$ is called {\em fast growing} with respect
to a given maxmod-generic recurrence \eqref{eq:Basic} if the
     coefficient $\kappa_{max}$ of $\la_{max}^n$ in~\eqref{eq:leadasymp} is
nonvanishing, that is, $u_{n}=\kappa_{max}\la_{max}^n+\ldots$ with
$\kappa_{max}\neq 0$.   Otherwise the $k$-tuple
$(u_{0},u_{1},\ldots,u_{k-1})$ is said to be {\em slow growing}. \end{definition}

\begin{remark}
Note that by Definition~\ref{def5} the leading spectral number $\la_{max}$ of any maxmod-generic recurrence has multiplicity one. An alternative characterization of  fast growing initial $k$-tuples is that they  have  the property $\lim_{n\to\infty}\frac{u_{n+1}}{u_n}=\la_{max}$.
One easily sees that  the set of all slowly growing initial $k$-tuples is a (complex) hyperplane in $\bC^k$,  its complement being the set of all fast growing $k$-tuples.
 The latter hyperplane of slow growing $k$-tuples can be found explicitly using linear algebra. 
\end{remark} 

\medskip 

A famous and frequently used  generalization of Theorem~\ref{th:ordrec}  in the case of variable coefficients was obtained by H.~Poincar\'e in 1885, \cite {Po} and later extended by O.~Perron, \cite{Pe}.  

\begin{theorem}[Poincar\'e-Perron] \label{th:Po} If the coefficients $\al_{i,n},\quad  i=1,...,k$ of a linear homogeneous difference equation 
\begin{equation}\label{eq:Po}
u_{n+k}+\al_{1,n}u_{n+k-1}+\al_{2,n}u_{n+k-2}+...+\al_{k,n}u_n=0
\end{equation}
have limits $\lim_{n\to \infty}\al_{i,n}=\al_i,\quad i=1,...,k$ and if the roots $\la_1,...,\la_k$ of the characteristic equation 
$t^k+\al_{1}t^{k-1}+...+\al_k=0$ have distinct absolute values then 
\begin{itemize}
\item[(i)]  for any solution $u$ of \eqref{eq:Po} either $u(n)=0$ for all sufficiently large $n$ or $\lim_{n\to\infty}\frac {u(n+1)}{u(n)}$ for $n\to \infty$ equals one of the roots of the characteristic equation. 
\item[(ii)]  if additionally $\al_{k,n}\neq 0$ for all $n$ then for every $\la_i$ there exists a solution $u$ of \eqref{eq:Po} with  $\lim_{n\to\infty}\frac {u(n+1)}{u(n)}=\la_i.$
\end{itemize} 
\end{theorem}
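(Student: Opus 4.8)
The plan is to pass to the companion first-order system, diagonalize the limiting matrix (its eigenvalues being of distinct moduli, they are distinct, hence it is diagonalizable), and then study a small $n$-dependent perturbation of a diagonal system with gapped spectrum. First I would set $\mathbf{v}_n=(u_n,u_{n+1},\dots,u_{n+k-1})\t$, so that \eqref{eq:Po} becomes $\mathbf{v}_{n+1}=A_n\mathbf{v}_n$ with $A_n$ the companion matrix of $t^k+\al_{1,n}t^{k-1}+\dots+\al_{k,n}$, and $A_n\to A$, the companion matrix of $t^k+\al_1t^{k-1}+\dots+\al_k$, whose eigenvalues are $\la_1,\dots,\la_k$. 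Relabelling so that $|\la_1|>\dots>|\la_k|\ge0$, write $A=SDS\inv$ with $D=\diag(\la_1,\dots,\la_k)$; since $A$ is a companion matrix its eigenvectors are $(1,\la_j,\dots,\la_j^{k-1})\t$, so the first row of $S$ is $(1,\dots,1)$. Putting $\mathbf{w}_n:=S\inv\mathbf{v}_n$ turns the recurrence into
\begin{equation*}
 \mathbf{w}_{n+1}=(D+C_n)\mathbf{w}_n,\qquad C_n:=S\inv(A_n-A)S\longrightarrow 0,
\end{equation*}
with $u_n=\sum_j w_n^{(j)}$; it then suffices to prove the statements for $\mathbf{w}$.

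For part (ii) I would fix $i$ with $\la_i\neq0$ and look for a solution of the form $\mathbf{w}_n=\la_i^{\,n}(\mathbf{e}_i+\mathbf{r}_n)$ with $\mathbf{r}_n\to0$, which immediately gives $u_n=\la_i^n(1+o(1))$ and $u_{n+1}/u_n\to\la_i$. Substituting, $\mathbf{r}$ satisfies an inhomogeneous linear recursion $\la_i\,\mathbf{r}_{n+1}=(D+C_n)\mathbf{r}_n+C_n\mathbf{e}_i$, the $i$-th coordinate being absorbed by the normalization. I would split the remaining coordinates into the \emph{fast} block $F=\{j:|\la_j|>|\la_i|\}$ and the \emph{slow} block $G=\{j:|\la_j|<|\la_i|\}$, solve the $G$-equations forward from a large index $N$ and the $F$-equations backward as absolutely convergent sums, and check that the resulting map on the Banach space of null sequences $(\mathbf{r}_n)_{n\ge N}$ is a contraction once $N$ is large — this uses the spectral gaps $\min_{j\in F}|\la_j|>|\la_i|>\max_{j\in G}|\la_j|$ together with $\|C_n\|\to0$. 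Running the fast block \emph{backward} is precisely where the hypothesis $\al_{k,n}\neq0$ is used: it makes every $D+C_n$ invertible. The case $\la_i=0$ (possible only for $i=k$, and only if $\al_k=0$) degenerates for this ansatz; there the existence of a solution with $u_{n+1}/u_n\to0$ would follow from a dimension count on the flag of subspaces $\{u:\limsup_n|u_n|^{1/n}\le|\la_i|\}$ produced by the same estimates, or by a short order-reduction argument.

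For part (i), I would let $u$ be a solution that does not eventually vanish; since $\mathbf{v}_n=0$ propagates forward, $\mathbf{w}_n\neq0$ for all $n$. When $\al_k\neq0$ (equivalently all $\la_j\neq0$) this reduces to part (ii): then $\al_{k,n}\neq0$ for all large $n$, part (ii) supplies a basis $\mathbf{w}^{(1)},\dots,\mathbf{w}^{(k)}$ with $\mathbf{w}^{(j)}_n=\la_j^n(\mathbf{e}_j+o(1))$, and writing $\mathbf{w}=\sum_j c_j\mathbf{w}^{(j)}$ with $i=\min\{j:c_j\neq0\}$ the term $c_i\mathbf{w}^{(i)}$ dominates because $|\la_j|<|\la_i|$ for $j>i$, so $u_{n+1}/u_n\to\la_i$. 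In general — in particular without invertibility — I would argue directly on $\mathbf{w}_{n+1}=(D+C_n)\mathbf{w}_n$ by a dominated-splitting argument: for any $\delta>0$ and all large $n$, each cone $K_i(\delta)=\{\mathbf{w}:|w^{(j)}|\le\delta|w^{(i)}|\ \text{for }j\neq i\}$ is forward invariant, on it $w^{(i)}_{n+1}/w^{(i)}_n=\la_i+O(\delta)$, and the modulus gaps force every never-vanishing solution eventually into exactly one such cone; letting $\delta\to0$ gives $u_{n+1}/u_n\to\la_i$.

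The algebraic reduction and the bookkeeping in the contraction argument are routine. The hard part will be the analytic core: showing that a vanishingly small but $n$-dependent perturbation $C_n$ of $D$ cannot mix the growth rates dictated by the gapped spectrum of $D$. Concretely, one must balance the cone aperture $\delta$, the threshold $N$, and the rate at which $\|C_n\|\to0$ is exploited so that the forward-invariance and contraction estimates close at once, and separately verify the (elementary but fussy) fact that the dominant coordinate of a never-vanishing solution eventually stabilizes. This is exactly the estimate that the parametric strengthening in the next section must carry out uniformly in the auxiliary parameters.
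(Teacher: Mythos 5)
First, a point of comparison: the paper does not prove Theorem~\ref{th:Po} at all --- it is quoted as a classical result with references to Poincar\'e \cite{Po} and Perron \cite{Pe}, and the machinery the paper actually develops (Theorem~\ref{th:linop} from \cite{Fr2} and its parametric version in \S 2, proved by contraction estimates in projective space) only recovers the statement for the eigenvalue of maximal modulus. So your sketch is measured against the classical literature rather than against anything in the text. Your route --- companion system, diagonalization of the limiting matrix, viewing $\mathbf{w}_{n+1}=(D+C_n)\mathbf{w}_n$ as a vanishing perturbation of a gapped diagonal system, and a Lyapunov--Perron fixed-point construction for part (ii) --- is the standard modern proof, and the outline of (ii) is sound. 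Two bookkeeping remarks there: the backward iteration of the fast block only needs invertibility of the fast diagonal part of $D$ (automatic, since those eigenvalues have modulus strictly larger than $|\lambda_i|\ge 0$); the hypothesis $\al_{k,n}\neq 0$ for \emph{all} $n$ is really needed to extend the tail solution constructed for $n\ge N$ backward to $n=0$, since otherwise the map from initial data at $n=0$ to states at $n=N$ need not be onto. Also, the reduction of (i) to (ii) requires checking that the $k$ solutions produced in (ii) are linearly independent, which follows by comparing leading asymptotics.

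The genuine gap is the forward-invariance claim in part (i). For $i$ not the index of maximal modulus, the cone $K_i(\delta)=\{\mathbf{w}:\,|w^{(j)}|\le\delta|w^{(i)}|\text{ for }j\neq i\}$ is \emph{not} forward invariant, even with $C_n\equiv 0$: take $k=2$, $\lambda_1=2$, $\lambda_2=1$, $\mathbf{w}_0=(\delta,1)\in K_2(\delta)$; then $\mathbf{w}_n=(2^n\delta,1)$ exits $K_2(\delta)$ at the first step. Any nonzero component along a faster eigendirection grows relative to $w^{(i)}$, so no two-sided cone about an intermediate eigendirection can be invariant, and this is exactly the case your argument must handle (when some $A_n$ is singular one cannot fall back on part (ii)). The classical repair is a one-sided comparison: if $|w_n^{(i)}|=\max_j|w_n^{(j)}|=:M_n$ and $n$ is large, then $|w_{n+1}^{(i)}|\ge(|\lambda_i|-k\|C_n\|)M_n$ while $|w_{n+1}^{(j)}|\le(|\lambda_j|+k\|C_n\|)M_n$ for every $j$ with $|\lambda_j|<|\lambda_i|$, so slower coordinates can never overtake a currently maximal faster one; hence the modulus of the eigenvalue indexing the argmax is nondecreasing and therefore eventually constant, equal to some $|\lambda_i|$. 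One then shows $w_n^{(j)}/w_n^{(i)}\to 0$ for all $j\neq i$ (for slower $j$ by iterating the same estimate; for faster $j$ because the ratio is bounded by $1$ yet is multiplied by a factor exceeding some $q>1$ whenever it exceeds a fixed $\delta>0$, forcing it below every $\delta$ eventually), and $u_{n+1}/u_n\to\lambda_i$ follows, the case $M_n=0$ giving the eventually vanishing solutions. With this replacement your sketch closes up into a complete proof.
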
Ê

\begin{remark}Ê
If as above  $\la_{max}$ will denote  the root of the limiting characteristic equation with the  maximal absolute value then under the assumptions  of Theorem~\ref{th:Po} (ii)  the set of solutions of \eqref{eq:Po} Êfor which $\lim_{x\to\infty}\frac {u(n+1)}{u(n)}\neq \la_{max}$  is a complex hyperplane in the space of all solutions.    For the latter fact to hold the assumption that all $\la_i$'s have distinct absolute values can be substituted by the weaker assumption of maxmod-genericity of the limiting recurrence, see details in Lemma~\ref{lm:varcoef}. But, in general,   there seems to be no  easy way to determine this hyperplane explicitly.
\end{remark}

A number of generalizations and applications of the  Poincar\'e-Perron theorem can be found in the literature, see e.g. \cite{Ko1}, \cite{Ko2}, \cite{MaNe}, \cite{MaMu}, \cite{Pi}Ê and references therein.  The set-up of Poincar\'e-Perron is often generalized 
to the case of   Poincar\'e difference systems. Namely, consider an iteration scheme: 
\begin{equation}\label{eq:Poin}
{\bf u}(n+1)=\left[A+B(n)\right]{\bf u}(n),
\end{equation}
where ${\bf u}(n)$ is a vector in $\bC^k$, $A$ and $B(n),\;n=0,1,...$ are $k\times k$-matrices such that 
$||B(n)||\to 0$ as $n\to \infty$. For example, one of quite recent results in this direction having a very strong resemblance with Theorem~\ref{th:Po} is as follows, see \cite{Pi}, Theorem~1.

\begin{theorem}\label{th:pit} Let $\bf u$ be a solution of  the system~\eqref{eq:Poin}  under the assumption that $||B(n)||\to 0$ as $n\to \infty$. Then either ${\bf u}(n)=0$ for all sufficiently large $n$ or 
$$\rho=\lim_{n\to \infty} \root  n \of {||u(n)||}$$ 
exists and is equal to the modulus of one of the eigenvalues of the matrix $A$. 
\end{theorem}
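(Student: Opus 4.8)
The plan is to upgrade the classical Perron dichotomy: first bound the exponential growth rate of every solution from above by the spectral radius of $A$ via a norm adapted to $A$, and then pin it down to the modulus of a specific eigenvalue by analyzing how the solution interacts with the spectral subspaces of $A$, running the argument both forward and backward in $n$.

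\emph{Upper bound and the trivial subspace.} Since all norms on $\bC^k$ and on $M_k(\bC)$ are equivalent, we may replace $\|\cdot\|$ by a norm adapted to $A$: given $\eps>0$, choose one with $\|A\|\le r(A)+\eps$, where $r(A)$ is the spectral radius. As $\|B(n)\|\to 0$ in that norm as well, for $n$ large we get $\|\u(n+1)\|\le(r(A)+2\eps)\|\u(n)\|$, whence $\limsup_n\|\u(n)\|^{1/n}\le r(A)$ for every solution; let $\eps\to0$. The eventually-zero solutions form a linear subspace $Z$ (the kernel of the map $\u\mapsto\u(N)$ for $N$ large), which realizes the first alternative; from now on assume $\u\notin Z$, so $\u(n)\neq 0$ for all $n$.

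\emph{Forward dichotomy.} Let $\mu_1>\dots>\mu_s\ge0$ be the distinct moduli of the eigenvalues of $A$ and $\bC^k=E_1\oplus\dots\oplus E_s$ the corresponding $A$-invariant generalized eigenspace decomposition. For $\rho\in(0,\infty)\setminus\{\mu_i\}$ set $V^\rho=\bigoplus_{\mu_i<\rho}E_i$ and $W^\rho=\bigoplus_{\mu_i>\rho}E_i$, with spectral projections $Q$ onto $V^\rho$ and $P=I-Q$ onto $W^\rho$; in a suitable adapted norm $\|A|_{V^\rho}\|<\rho$ and $\|(A|_{W^\rho})^{-1}\|<\rho^{-1}$ with room to spare. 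I claim
\[ S_\rho:=\{\u\text{ a solution}:\ \limsup_n\|\u(n)\|^{1/n}\le\rho\},\qquad \dim(S_\rho/Z)=\dim V^\rho. \]
The inequality ``$\ge$'' is a variation-of-constants fixed point: in the Banach space of sequences with $\sup_n\rho^{-n}\|\cdot\|<\infty$, the operator that propagates the $V^\rho$-component forward from a free datum $\xi\in V^\rho$ and the $W^\rho$-component backward ``from $+\infty$'' through $(A|_{W^\rho})^{-1}$ (a strict contraction against the weight $\rho^{-n}$, since $W^\rho$ carries moduli $>\rho$) is a contraction once $n$ is large enough that $\|B(n)\|$ is small; its fixed point is a solution depending linearly and injectively on $\xi$. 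The reverse inequality ``$\le$'' — the step I expect to be the real obstacle — comes from telescoping $P\u(n)=(A|_{W^\rho})^{-1}\bigl(P\u(n+1)-PB(n)\u(n)\bigr)$: the boundary term after $N$ steps, $(A|_{W^\rho})^{-N}P\u(n+N)$, tends to $0$ because $\|\u(n)\|$ grows no faster than $(\rho+\eps)^n$ while $\|(A|_{W^\rho})^{-1}\|^N$ decays strictly faster, leaving the identity $P\u(n)=-\sum_{m\ge n}(A|_{W^\rho})^{-(m-n+1)}PB(m)\u(m)$, which — since $\|B(m)\|\to 0$ — forces $\rho^{-n}\|P\u(n)\|\to 0$; hence every $\u\in S_\rho$ coincides with the fixed point attached to its own slow datum, bounding $\dim S_\rho$. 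Because $\dim V^\rho$ is a step function of $\rho$ with jumps only at the $\mu_i$, so is $\dim S_\rho$; moreover $S_\rho$ itself is constant on each interval $(\mu_{i+1},\mu_i)$. Consequently $\limsup_n\|\u(n)\|^{1/n}\in\{\mu_1,\dots,\mu_s\}$ for every $\u\notin Z$.

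\emph{Backward dichotomy and conclusion.} Apply the same machinery to the reciprocal recurrence $\u(n)=(A^{-1}+\widetilde B(n))\u(n+1)$, $\widetilde B(n)=(A+B(n))^{-1}-A^{-1}\to0$, valid on the $A$-invertible part (the generalized kernel $E_s$ contributes only solutions with $\limsup_n\|\u(n)\|^{1/n}=0$, for which the conclusion is already clear). This gives $\dim\{\u:\liminf_n\|\u(n)\|^{1/n}\ge\rho\}=k-\dim V^\rho$, so $\liminf_n\|\u(n)\|^{1/n}\in\{\mu_i\}$ as well. Finally, fixing $i$ and combining the forward estimate at a level just above $\mu_i$ with the backward estimate at a level just below $\mu_i$, an inclusion–exclusion dimension count shows that the solutions with $\mu_i\le\liminf\le\limsup\le\mu_i$, i.e. with $\lim_n\|\u(n)\|^{1/n}=\mu_i$, form a subspace of dimension $\ge m_i$ (the algebraic multiplicity of the modulus-$\mu_i$ eigenvalues); these subspaces are independent and their dimensions sum to $k$, hence exhaust all solutions, so each $\u\notin Z$ has $\lim_n\|\u(n)\|^{1/n}$ equal to exactly one $\mu_i$. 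The principal difficulties are the ``$\le$'' half of the forward dichotomy — the summation-from-infinity estimate together with the uniform bookkeeping of the slack $\eps$ and of the base index beyond which $\|B(n)\|$ is small — and the separate, but routine, treatment of the non-invertible case needed for the backward step.
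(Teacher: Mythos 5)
First, a remark on scope: the paper does not prove this statement. Theorem~\ref{th:pit} is quoted verbatim from Pituk \cite{Pi} (his Theorem~1) as background, so there is no in-paper proof to compare against; I am assessing your argument on its own merits.

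Your upper-bound half is sound: the adapted-norm estimate giving $\limsup_n\|\u(n)\|^{1/n}\le\rho(A)$, and the forward dichotomy $\dim(S_\rho/Z)=\dim V^\rho$ for $\rho\notin\{\mu_i\}$ (Lyapunov--Perron fixed point for ``$\ge$'', summation-from-infinity of the $W^\rho$-component for ``$\le$''), are standard and correct, and they do yield $\limsup_n\|\u(n)\|^{1/n}\in\{\mu_1,\dots,\mu_s\}$. The genuine gap is in the $\liminf$ half, which is where the actual content of the theorem (existence of the \emph{limit}) lives. Three problems. (a) The set $\{\u:\liminf_n\|\u(n)\|^{1/n}\ge\rho\}$ is not a linear subspace --- e.g.\ for $A=\mathrm{diag}(2,1)$, $B\equiv 0$, the solutions $(2^n,1)$ and $(-2^n,0)$ both have $\liminf=2$ but their sum has $\liminf=1$ --- so the asserted dimension formula is ill-posed; the same objection applies to your final count of ``solutions with $\lim=\mu_i$''. (b) ``Applying the same machinery to the reciprocal recurrence'' does not work even when $A$ is invertible: reversing time turns $\u(n)=(A^{-1}+\widetilde B(n))\u(n+1)$ into a system whose coefficient at reversed time $m$ is $(A+B(N-m-1))^{-1}$, which does \emph{not} converge as $m\to\infty$ (the index runs back toward $0$, where $\widetilde B$ is not small), so it is not a Poincar\'e system; and when $A$ is singular you cannot ``restrict to the $A$-invertible part'' because $B(n)$ mixes the spectral subspaces. (c) What is actually needed is the strong Perron dichotomy: for each $\rho\notin\{\mu_i\}$ and each not-eventually-zero solution, either $\rho^{-n}\|\u(n)\|\to 0$ or $\rho^{-n}\|\u(n)\|\to\infty$. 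This is proved by a cone-invariance/comparison argument between $x(n)=\|P\u(n)\|$ and $y(n)=\|Q\u(n)\|$ (once the fast component dominates the slow one, it dominates forever and forces growth at rate $>\rho$), and it is exactly the ingredient your proposal lacks; with it, $(\liminf,\limsup)$ can contain no $\rho\notin\{\mu_i\}$ and the theorem follows. Finally, even granting subspaces $\mathcal U_i$ of solutions with $\lim=\mu_i$ that sum directly to everything mod $Z$, ``hence exhaust all solutions'' needs the extra (easy, but unstated) domination argument showing that in a sum $\sum_i\w_i$ the component with the largest $\mu_i$ controls both $\liminf$ and $\limsup$.
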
 
 
In connection with the present project  the second author  earlier  obtained the following generalization of the Poincare-Perron theorem for the case of Poincar\'e difference systems,  see \cite[Theorem 1.2]{Fr1}. This statement  apparently covers the majority of results in this direction known at present. Let $\rm{M}_k(\F)$ and $\rm{GL}_k(\F)$ denote the spaces of all and  resp. of all invertible  $(k\times k)$-matrices over a field $\F$.

\begin{theorem}\label{th:linop} 
Let $\{T_{n}\}_{n \in \bN}$ be a sequence of regular matrices
in $\text{{\rm GL}}_{k}(\bC)$ converging to some (possibly singular)
matrix $T \in \text{{\rm M}}_{k}(\bC)$. Assume furthermore that $T$
has a positive spectral radius $\rho(T)$ and that the circle
$\big\{z\in \bC\,\big|\, \vert z \vert=\rho(T)\big\}$ contains exactly one
eigenvalue $\la_{max}$ of $T$
which is a simple root of its characteristic equation. Let ${\bf u}_{max}$
denote an eigenvector of $T$ corresponding to $\la_{max}$, i.e.,
$T{\bf u}_{max}=\la_{max}{\bf u}_{max}$, ${\bf 0}\neq{\bf u}_{max}\in \bC^{k}$. Then the complex line
spanned by
the product $T_{n}T_{n-1}\cdots T_{1}\in \text{{\rm M}}_{k}(\bC)$ converges to
the complex line spanned by ${\bf u}_{max}{\bf w}^{t} \in \text{{\rm M}}_{k}(\bC)$ for
some fixed vector ${\bf 0}\neq {\bf w}\in \bC^{k}$. Hence for any vector
${\bf x}_{0}\in \bC^{k}$ such that ${\bf w}^{t}{\bf x}_{0}\neq 0$ the complex
line in $\bC^k$ spanned by $T_{n}T_{n-1}\cdots T_{1}{\bf x}_{0}$
converges to the complex line in $\bC^k$ spanned by ${\bf u}_{max}$ as $n\to\infty$.
\end{theorem}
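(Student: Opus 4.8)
The plan is to reduce the statement to a question about convergence of lines (rank-one subspaces) in $\rM_k(\bC)$ and then extract from the product $T_nT_{n-1}\cdots T_1$ its dominant rank-one part. First I would normalize: since each $T_n$ is invertible, the product $\Pi_n:=T_nT_{n-1}\cdots T_1$ is invertible, hence nonzero, so it determines a well-defined point $[\Pi_n]$ in the projective space $\mathbb P(\rM_k(\bC))$. The goal is to show $[\Pi_n]$ converges, and that the limit is a rank-one matrix of the form ${\bf u}_{max}{\bf w}\t$. The natural device is to factor out the scalar growth: set $\widehat\Pi_n:=\la_{max}^{-n}\Pi_n$ (or divide by $\prod_{j\le n}\mu_j$ for a suitable sequence $\mu_j\to\la_{max}$ built from the $T_j$, to keep things honest when $T$ is singular). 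Then I want to prove $\widehat\Pi_n$ converges in $\rM_k(\bC)$ to a matrix of rank exactly one.

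The core analytic step I would carry out via a spectral/invariant-subspace splitting of the limit matrix $T$. Because $\la_{max}$ is a simple eigenvalue lying strictly outside the rest of the spectrum in modulus, $\bC^k$ splits $T$-invariantly as $\bC^k=E\oplus F$, where $E=\span\{{\bf u}_{max}\}$ is the $\la_{max}$-eigenline and $F$ is the complementary $T$-invariant subspace (spanned by the generalized eigenvectors for the other eigenvalues); dually there is a left eigenvector ${\bf w}$ with ${\bf w}\t T=\la_{max}{\bf w}\t$, normalized so that ${\bf w}\t{\bf u}_{max}=1$, and the spectral projection onto $E$ along $F$ is $Q={\bf u}_{max}{\bf w}\t$. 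The spectral radius of $T|_F$ is some $\rho'<\rho(T)=|\la_{max}|$. Now I would set up a contraction/cone argument: after conjugating so that $T=\diag(\la_{max},\,S)$ with $\rho(S)=\rho'<|\la_{max}|$, choose an adapted norm in which $\|S\|\le\rho''$ for some $\rho'<\rho''<|\la_{max}|$. Writing $T_n=T+B_n$ with $\|B_n\|\to0$, and normalizing by $\la_{max}^{-1}$ at each stage, the block lower/upper entries get multiplied by factors bounded by $\rho''/|\la_{max}|<1$ plus an $o(1)$ perturbation, so a telescoping product estimate shows that, acting on any line not inside the ``bad'' hyperplane, the iterates are attracted to the line $E$ at a geometric rate. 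This is essentially the standard proof of Poincar\'e–Perron packaged projectively; the perturbation terms $B_n$ are controlled because $\sum$-type or $\prod$-type bounds only need $\|B_n\|\to0$, not summability, once the gap $|\la_{max}|>\rho''$ is strict.

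Concretely, the steps in order: (1) spectral splitting of $T$ and choice of an adapted norm realizing the gap $|\la_{max}|>\rho''>\rho(T|_F)$; (2) prove that for a fixed vector ${\bf x}_0$ with ${\bf w}\t{\bf x}_0\neq0$ the normalized orbit $\la_{max}^{-n}\Pi_n{\bf x}_0$ converges to a nonzero multiple of ${\bf u}_{max}$ --- this is the ``for any ${\bf x}_0$'' clause and follows from the cone-contraction estimate applied to the vector orbit; (3) upgrade from vectors to the matrix $\Pi_n$ itself: apply (2) to a basis, or better, observe that $\la_{max}^{-n}\Pi_n = \la_{max}^{-n}\Pi_n Q + \la_{max}^{-n}\Pi_n(I-Q)$, show the second term $\to 0$ (since $I-Q$ projects onto the invariant subspace $F$ where the growth rate is $<|\la_{max}|$, the perturbation not destroying this because near-invariance is preserved) and the first term, being $\la_{max}^{-n}\Pi_n$ applied to the rank-one $Q={\bf u}_{max}{\bf w}\t$, has columns that are the orbits of ${\bf u}_{max}$ scaled by the entries of ${\bf w}$, hence converges to ${\bf u}_{max}{\bf w}'\t$ for some ${\bf w}'$; (4) identify the limit line and conclude. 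For the final sentence of the theorem, given convergence of the line spanned by $\Pi_n$ to the line spanned by ${\bf u}_{max}{\bf w}\t$, and given ${\bf w}\t{\bf x}_0\neq0$, the image $\Pi_n{\bf x}_0$ spans a line converging to the line spanned by ${\bf u}_{max}{\bf w}\t{\bf x}_0 = ({\bf w}\t{\bf x}_0)\,{\bf u}_{max}$, i.e. to $\span\{{\bf u}_{max}\}$, because matrix-to-line convergence is continuous under applying a fixed nonzero functional.

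The main obstacle I anticipate is Step (3)/(2) when $T$ is genuinely singular: then one cannot simply divide by $\la_{max}^n$ and invoke boundedness of $T|_F^n$ in the naive way if some other eigenvalue is $0$, but this is actually harmless (zero just makes the $F$-part shrink faster), so the real subtlety is bookkeeping the perturbations $B_n$ in the non-normal case --- the invariant splitting $E\oplus F$ of $T$ is only \emph{approximately} invariant for each $T_n$, and one must show the accumulated leakage from $F$ into $E$ and back does not spoil either the geometric decay of the $F$-component or the nonvanishing of the $E$-component. The clean way to handle this is to work not with a fixed splitting but with the projective dynamics directly: show that the map induced by $T_n$ on $\mathbb P(\bC^k)$ eventually maps a fixed closed ball around $[{\bf u}_{max}]$ (in the Fubini–Study metric, or a cone in an adapted norm) strictly into itself with a uniform contraction factor, so the orbit is Cauchy. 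Establishing that uniform contraction --- i.e. that finitely many early $T_n$'s don't matter and the tail behaves like the constant-coefficient case --- is the technical heart, and it is exactly where the hypothesis ``$T_n\in\gl_k(\bC)$ for all $n$ and $T_n\to T$'' (rather than merely eventually) is used to guarantee $\Pi_n\neq0$ so that $[\Pi_n]$ is defined for every $n$.
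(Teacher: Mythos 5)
First, note that the paper does not prove Theorem~\ref{th:linop} at all: it is imported verbatim from \cite{Fr1} (see also \cite[Theorem 1.2]{Fr2}), and the only proof machinery in the paper is for the parametric extension, Theorem~\ref{anconvprod}, via Lemmas~\ref{z0epsdeltpert} and \ref{rank1ma}. Your closing ``clean way'' --- group the factors into blocks whose product approximates the rank-one projector $\la_{max}^{-m}T^m\approx \u\v\t$, and show the induced projective maps send a fixed Fubini--Study ball strictly into itself with a uniform contraction factor, so the orbit is Cauchy --- is exactly that machinery, so in spirit your argument and the source's coincide. Two details in your write-up deserve flagging. Minor one: a \emph{single} $T_n$ close to $T$ need not contract a Fubini--Study ball around $[\u_{max}]$ when $T$ is non-normal; either you pass to an adapted norm (as you do in step (1)) or you must block $m$ consecutive factors so that the product is $\varepsilon$-close to the rank-one limit of $\la_{max}^{-m}T^m$ --- the paper's Lemma~\ref{z0epsdeltpert} does the latter.

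The genuine gap is in step (3). You decompose $\la_{max}^{-n}\Pi_n=\la_{max}^{-n}\Pi_nQ+\la_{max}^{-n}\Pi_n(I-Q)$ with $Q=\u_{max}\w\t$ the spectral projection of the \emph{limit} $T$, and claim the second term tends to $0$ and the first has columns proportional to the orbit of $\u_{max}$. Neither claim is justified: $F=\range(I-Q)$ is invariant only for $T$, and the early factors $T_1,\dots,T_N$ (which are far from $T$) can map $F$ onto a subspace with a nonzero component along $\u_{max}$, which the tail then amplifies at rate $|\la_{max}|^n$; symmetrically, $\Pi_n\u_{max}$ need not stay close to the line of $\u_{max}$, and indeed may lie in the exceptional hyperplane of the theorem (there is no guarantee that the limiting functional $\w$ satisfies $\w\t\u_{max}\neq0$). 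This is also why the vector $\w$ in the conclusion is \emph{not} the left eigenvector of $T$ but depends on the whole sequence. The correct repair --- and the one implicit in the paper's proof of Theorem~\ref{anconvprod} --- is to peel off a finite invertible prefix $\Pi_N$ (projectively harmless) and run the contraction only on the tail $T_n\cdots T_{N+1}$, whose factors are uniformly close to $T$; the limit line of $\Pi_n$ is then that of $\u_{max}\v_N\t\Pi_N$, i.e.\ $\w=\Pi_N\t\v_N$ with $\v_N$ obtained by running the \emph{transposed} dynamics on the left eigenvector $\v$ of $T$. With that replacement of step (3), and your step (2)/(4) kept as stated, the argument is sound.
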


\begin{remark} Similarly to the situation with  Theorem~\ref{th:Po}   there seems to be no  easy way to explicitly determine the vector $\bf w$ in Theorem~\ref{th:linop}. \end{remark}Ê   

\medskip 
The  goal of the present paper  is to present an extension of Theorem~\ref{th:linop} for  sequences of invertible matrices depending continuously or analytically on auxiliary  parameters.  In other words, we are looking for a {\em parametric Poincar\'e-Perron theorem}.   The main  result of the present paper is as follows. 
 \begin{theorem}\label{anconvprod}  Let $\{T_n(\x)\}_{n\in\N}$ be a sequence of families of regular matrices in $\text{{\rm GL}}_{k}(\bC)$  depending continuously on $\x \in \mathcal{D}\subset\R^d$.  Assume that this sequence converges uniformly, on any compact set in $\mathcal{D}$,
 to a matrix $T(\x)$.  
 Suppose furthermore that for each $\x\in\mathcal{D}$, $T(\x)$ has exactly one simple
 eigenvalue $\lambda_{max}(\x)$ of the maximal modulus with the
 corresponding eigenvector $\hat \u_{max}(\x)\in \P^{k-1}$.
 Then the product $T_n(\x) T_{n-1}(\x)\ldots T_2(\x) T_1(\x)$,
 viewed as an automorphism of $\P^{k-1}$, converges to the transformation $\hat \u_{max}(\x)\hat \w\t(\x)$ on $\P^{k-1}\backslash \widehat{H(\x)}$,
 where $\hat \w(\x)\in\P^{k-1}$ is continuous in $\mathcal{D}$ and $H(\x)$ is the hyperplane given by $\w(\x)\t \v=0$. (Here $\P^{k-1}$ 
 stands for the complex projective space of dimension $k-1$.) 
 \end{theorem}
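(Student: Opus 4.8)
The strategy is to combine the non-parametric result Theorem~\ref{th:linop} (applied pointwise in $\x$) with a uniformity/equicontinuity argument to upgrade pointwise convergence to the desired statement with continuous dependence on $\x$. First I would observe that for each fixed $\x\in\mathcal{D}$ the hypotheses of Theorem~\ref{th:linop} are satisfied: the matrices $T_n(\x)$ are regular, converge to $T(\x)$, and $T(\x)$ has a unique simple eigenvalue $\lambda_{max}(\x)$ of maximal modulus with eigenvector $\hat\u_{max}(\x)$. Hence for each $\x$ the complex line spanned by $T_n(\x)\cdots T_1(\x)$ in $\rM_k(\bC)$ converges to the line spanned by $\u_{max}(\x)\w(\x)\t$ for some $\w(\x)\ne\0$, and consequently, viewed as a map on $\P^{k-1}$, the product converges to $\hat\u_{max}(\x)\hat\w\t(\x)$ off the hyperplane $H(\x)=\{\v:\w(\x)\t\v=0\}$.

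The substantive content is then: (a) that $\lambda_{max}(\x)$ and $\hat\u_{max}(\x)$ depend continuously on $\x$, and (b) that $\hat\w(\x)$ can be chosen to depend continuously on $\x$. For (a), I would use the fact that the simple eigenvalue of maximal modulus is isolated and a simple root of the characteristic polynomial; by the implicit function theorem (or the classical perturbation theory of isolated eigenvalues) applied to the characteristic polynomial of $T(\x)$, which has coefficients continuous in $\x$, one gets that $\lambda_{max}(\x)$ is locally the unique root in a small disk and varies continuously, and the associated spectral projector $P(\x)=\frac{1}{2\pi i}\oint (zI-T(\x))\inv\,dz$ (the contour being a fixed small circle around $\lambda_{max}(\x_0)$, valid on a neighborhood of $\x_0$) is continuous, giving continuity of the line $\hat\u_{max}(\x)$. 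A parallel argument applied to $T(\x)\t$ produces a continuous choice of left eigenvector $\hat\w_{max}(\x)$ of $T(\x)$; but one must be careful, because the $\w(\x)$ appearing in the limit is \emph{not} the left eigenvector of $T(\x)$ in general — it encodes the whole infinite product, not just the limit matrix.

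For (b), which I expect to be the main obstacle, the plan is to make the convergence in Theorem~\ref{th:linop} \emph{locally uniform} in $\x$ and then extract continuity of $\hat\w(\x)$ as a locally uniform limit of continuous functions. Concretely: fix $\x_0$; on a small compact neighborhood $K$ of $\x_0$, the eigenvalue $\lambda_{max}(\x)$ stays uniformly separated in modulus from the remaining spectrum of $T(\x)$, and one can run the proof of Theorem~\ref{th:linop} with all the estimates made uniform over $K$ (using that $T_n(\x)\to T(\x)$ uniformly on $K$, together with uniform bounds on $\|T_n(\x)\inv\|$ and on the spectral gap). This yields that the continuous maps $\x\mapsto[\,T_n(\x)\cdots T_1(\x)\,]\in\P(\rM_k(\bC))$ converge uniformly on $K$ to $\x\mapsto[\,\u_{max}(\x)\w(\x)\t\,]$; a uniform limit of continuous maps into the (compact Hausdorff, metrizable) space $\P(\rM_k(\bC))$ is continuous, so $[\,\u_{max}(\x)\w(\x)\t\,]$ is continuous, and since $\hat\u_{max}(\x)$ is continuous and nonzero we can factor out to conclude $\hat\w(\x)$ is continuous on $K$. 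As $\x_0$ was arbitrary and continuity is local, $\hat\w$ is continuous on all of $\mathcal{D}$, and $H(\x)$ then varies continuously as well. The one delicate point to handle with care is that the limit of lines in $\rM_k(\bC)$ might \emph{a priori} collapse to $0$ or fail to be of rank one uniformly; but rank one and nonvanishing are guaranteed pointwise by Theorem~\ref{th:linop}, and the uniform estimates keep the rank-one limit bounded away from the zero line on $K$, so no degeneration occurs.
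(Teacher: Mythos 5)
Your plan is essentially the paper's own strategy: apply the non-parametric Theorem~\ref{th:linop} pointwise, observe that $\hat\u_{max}(\x)$ (and $\lambda_{max}(\x)$) vary continuously by spectral perturbation, and then obtain continuity of $\hat\w(\x)$ by making the convergence locally uniform in $\x$ near each $\x_0$, exploiting the uniform convergence $T_n(\x)\rightrightarrows T(\x)$. The one step you leave as an assertion ("run the proof with all estimates made uniform") is exactly where the paper's two lemmas do the work: after normalizing so $\lambda_{max}\equiv 1$, one groups the factors into blocks of a fixed length $m$ chosen so that each block is uniformly close to the rank-one projector $\u(\x_0)\v\t(\x_0)$ for $|\x-\x_0|\le\delta$ and $n\ge N$, and then a projective contraction estimate (each block halves Fubini--Study distances near $\hat\v(\x_0)$) makes the sequence of transposed block products applied to $\hat\v(\x_0)$ uniformly Cauchy, hence uniformly convergent to a continuous limit, which is $\hat\w(\x)$.
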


\begin{remark} In Theorem~\ref{anconvprod}  we consider continuous dependence of $\{T_n(\x)\}$ on $\x$ and the uniform convergence of this sequence to the limiting $T(\x)$. The same result holds in the analytic category if  $\{T_n(\x)\}$ depend analytically  on $\x$ and converge uniformly to $T(\x)$. One can also get the same result in the smooth category under the assumption that  $\{T_n(\x)\}$ depend smoothly on $\x$ and converge uniformly to $T(\x)$ together with their partial  derivatives of all orders. Since we consider families of complex matrices depending continuously on parameters we decided to use $\bR^d$ as a parameter space. The case when parameters  
belong to $\bC^d$ would be  equally natural.  
\end{remark}Ê

\begin{remark}ÊIn a sense Theorems~\ref{th:linop} and ~\ref{anconvprod} can be considered as a far reaching and parametric generalization of the well-known power method in linear algebra which is a simple iterative procedure allowing one to  determine the dominating eigenvalue and eigenvector of a given square matrix possessing a unique eigenvalue with maximal modulus, see e.g. \cite {Vo}. On the other hand, the fact that we consider sequences of matrices (instead of one and the same matrix) additionally depending on extra parameters creates substantial technical difficulties. 
\end{remark} 

 In spite of its simple  formulation it seems that Theorem~\ref{anconvprod}  has  no prototypes  in the existing literature.   We can now apply Theorems~\ref{th:Po}, ~\ref{th:linop}  and ~\ref{anconvprod}  to sequences of functions satisfying  finite linear  recurrence relations. The set-up is as follows. 

\begin{pb}
Given a positive integer $k\ge 2$ let  $\{\phi_{i,n}(\x)\}$, $1\le i\le k, n\in \bZ_+$, be 
a sequence of $k$-tuples of 
complex-valued functions of a  (multi-)variable $\x=(x_1,...,x_d)$ defined in some domain
$\Omega \subseteq \bR^d$. Describe the {\em asymptotics} when $n\to \infty$ of the ratio $\Psi_n(\x)=\frac{f_{n+1}(\x)}{f_{n}(\x)}$ for a  family
of complex-valued functions $\{f_{n}(\x)\mid n\in \bZ_{+}\}$
satisfying
\begin{equation}\label{eq:General}
f_{n+k}(\x)+\sum_{i=1}^{k}\phi_{i,n}(\x)f_{n+k-i}(\x)=0,\quad n\ge k-1.
\end{equation}
In other words, given a family $\{f_{n}(\x)\}$ of functions satisfying
 ~\eqref{eq:General} calculate the
asymptotic ratio $\Psi(\x)=\lim_{n\to \infty} \Psi_n(\x)$  (if it exists).
\end{pb}

To formulate our further results we need some notions. 
Denote by  $Pol_{k}=\left\{t^k+a_{1}t^{k-1}+\ldots +a_{k}\mid a_i\in \bC,
1\le i\le k\right\}$
 the set of all monic  polynomials of degree $k$ with complex coefficients. 

\begin{definition}\label{def7}
    The subset\ $\Xi_{k}\subset Pol_{k}$
    consisting of all maxmod-nongeneric polynomials is called the
     {\em standard maxmod-discriminant}, see Definition~2. For any family
$$\Gamma(t,\al_{1},\ldots,\al_{q})
=\left\{t^k+a_{1}(\al_{1},\ldots,\al_{q})t^{k-1}+\ldots
+a_{k}(\al_{1},\ldots,\al_{q})\right\}$$
of monic  polynomials of degree $k$ in $t$ we define the {\em induced maxmod-discriminant} $\Xi_{\Gamma}$ to be the set of all parameter values
    $(\al_{1},\ldots,\al_{q})\in \bC^q$ for which the corresponding polynomial
in $t$ is maxmod-nongeneric, i.e. belongs to $\Xi_{k}$.
\end{definition}

Some local properties of $\Xi_{k}$ can also be derived from \cite{Br} and \cite{Ma}.

\begin{example}
For $k=2$ the maxmod-discriminant $\Xi_{2}\subset
    Pol_{2}$  is the real hypersurface consisting of the set of all pairs $(a_1, a_2)$ such that there exists $\eps \in [1,\infty)$ solving the equation  $\eps a_{1}^2-4 a_{2}=0$, see Lemma~\ref{lm:deg2}.  More
    information on $\Xi_{k}$  will be given in \S 6.
\end{example}

Now take   a family 
$$\left\{\overline \phi_{n}
:=\big(\phi_{1,n}(\x),\phi_{2,n}(\x),
   \ldots,\phi_{k,n}(\x)\big)\mid n\in \bZ_+\right\}, \; \x=(x_1,...,x_d)$$
   of $k$-tuples of complex-valued functions defined on some 
   domain $\Omega \subset \bR^d$ such that 
   
   (i) $\phi_{k,n}(\x)$ is non-vanishing in $\Omega$;
   
   (ii)    $\overline \phi_{n}$ converges  to a fixed $k$-tuple of functions
$\widetilde \phi=\Big(\widetilde \phi_{1}(\x), \widetilde \phi_{2}(\x),
   \ldots,\widetilde \phi_{k}(\x)\Big)$ pointwise in $\Omega$. 
   
Choose some initial $k$-tuple of 
functions $I=\big(f_{0}(\x),\ldots,
   f_{k-1}(\x)\big)$  defined on $\Omega$ and determine the 
 family of  functions  $\{f_{n}(\x)\mid n\in \bZ_+\}$   that satisfies the recurrence relation \eqref{eq:General}
for all $n\ge k$ and coincides with
$I=\big(f_{0},\ldots,f_{k-1}\big)$ for $0\le n\le k-1$.

\medskip 
Theorems~\ref{th:Po}, ~\ref{th:linop} and \ref{anconvprod}  imply the following. 

\begin{theorem}\label{th:Exist}
In the above notation there exists a unique subset
$\Si_{I}\subseteq \Omega\setminus \Xi_{\widetilde\phi}$ which is minimal with
respect to inclusion and such that the following holds:
\begin{enumerate}
\item[(i)] For any $\x\in 
\Omega \setminus (\Xi_{\widetilde\phi}\cup \Si_{I})$ one has
$$\lim_{n\to \infty}\frac{f_{n+1}(\x)}
{f_{n}(\x)}=\Psi_{max}(\x),$$
where $\Psi_{max}(\x)$ is the leading root of the 
{\em asymptotic symbol equation}
\begin{equation}\label{eq:Symb}
	\Psi^k+\widetilde \phi_{1}(\x)\Psi^{k-1}+\widetilde
	\phi_{2}(\x)\Psi^{k-2}+\ldots+\widetilde \phi_{k}(\x)=0
\end{equation}
and $\,\Xi_{\widetilde\phi}$ denotes the induced maxmod-discriminant of~\eqref{eq:Symb} considered as a family of monic polynomials
in the variable $\Psi$ (cf.~Definition~\ref{def7}).  
\item[(ii)] If $\overline \phi_{n}$ consists of continuous functions and $\overline \phi_{n}\rightrightarrows 
\widetilde\phi=\big(\widetilde \phi_{1}(\x),\widetilde \phi_{2}(\x),\ldots,\widetilde \phi_{k}(\x)\big)$
in $\Omega$ then
$$\frac{f_{n+1}(\x)}
{f_{n}(\x)}\rightrightarrows 
\Psi_{max}(\x)\,\text{ in }\,\Omega \setminus 
(\Xi_{\widetilde\phi}\cup \Si_{I}),$$
where $\rightrightarrows$ stands  for uniform convergence on 
 compact subsets in $\Omega$.
\end{enumerate}
\end{theorem}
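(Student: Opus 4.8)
The plan is to reduce Theorem~\ref{th:Exist} to the matrix statements of Theorems~\ref{th:linop} and~\ref{anconvprod} by passing from the scalar recurrence~\eqref{eq:General} to the standard companion-matrix reformulation. First I would introduce, for each $\x\in\Omega$ and each $n$, the vector $\mathbf{F}_n(\x)=\big(f_{n+k-1}(\x),\ldots,f_n(\x)\big)\t\in\bC^k$ and the companion matrix $T_n(\x)$ of the tuple $\overline\phi_n(\x)$, so that $\mathbf{F}_{n+1}(\x)=T_n(\x)\mathbf{F}_n(\x)$ and hence $\mathbf{F}_{n}(\x)=T_n(\x)T_{n-1}(\x)\cdots T_k(\x)\,\mathbf{F}_{k-1}(\x)$. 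By hypothesis~(i) the bottom-row entry $\phi_{k,n}(\x)$ is non-vanishing, so each $T_n(\x)\in\gl_k(\bC)$; by hypothesis~(ii) we have $T_n(\x)\to T(\x)$, the companion matrix of $\widetilde\phi(\x)$, whose characteristic polynomial is exactly the asymptotic symbol equation~\eqref{eq:Symb}. The condition $\x\notin\Xi_{\widetilde\phi}$ means precisely that $T(\x)$ has a unique simple eigenvalue $\lambda_{max}(\x)=\Psi_{max}(\x)$ of maximal modulus, with companion-type eigenvector $\hat\u_{max}(\x)=\big(\Psi_{max}^{k-1},\ldots,\Psi_{max},1\big)\t$; since $\phi_{k,n}$ is non-vanishing and $\Xi_k$ is closed, $0$ is never the leading root, so $\rho(T(\x))>0$ as required.

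Next I would invoke Theorem~\ref{th:linop} pointwise: for each fixed $\x\in\Omega\setminus\Xi_{\widetilde\phi}$ it gives a vector $\mathbf{w}(\x)\neq\mathbf{0}$ such that the line spanned by $T_n(\x)\cdots T_k(\x)$ in $\rM_k(\bC)$ converges to the line spanned by $\hat\u_{max}(\x)\,\mathbf{w}\t(\x)$, and consequently, for any $\mathbf{x}_0$ with $\mathbf{w}\t(\x)\mathbf{x}_0\neq0$, the line spanned by the image vector converges to the line spanned by $\hat\u_{max}(\x)$. Applying this with $\mathbf{x}_0=\mathbf{F}_{k-1}(\x)$ (the initial data determined by $I$), the defining property of the exceptional set is: $\x$ is ``bad'' iff $\mathbf{w}\t(\x)\mathbf{F}_{k-1}(\x)=0$. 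I would therefore \emph{define} $\Si_I:=\{\x\in\Omega\setminus\Xi_{\widetilde\phi}\mid \mathbf{w}\t(\x)\mathbf{F}_{k-1}(\x)=0\}$ and show it is the unique inclusion-minimal set with the stated convergence property — minimality is essentially tautological because off $\Si_I$ convergence holds by the above, while on $\Si_I$ the leading coefficient $\kappa_{max}$ vanishes so $f_{n+1}/f_n$ cannot tend to $\Psi_{max}$ (it converges to a different root, or the $f_n$ eventually vanish, by Theorem~\ref{th:Po}(i)); uniqueness of a minimal set then follows. Reading off the ratio: the first coordinate of $\mathbf{F}_n(\x)$ divided by the first coordinate of $\mathbf{F}_{n-1}(\x)$ is exactly $f_{n+k-1}/f_{n+k-2}$, and convergence of the projective line to $[\hat\u_{max}(\x)]$ forces each coordinate ratio, in particular this one, to converge to the corresponding ratio of coordinates of $\hat\u_{max}(\x)$, namely $\Psi_{max}^{k-1}/\Psi_{max}^{k-2}=\Psi_{max}(\x)$. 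This proves~(i).

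For part~(ii), where $\overline\phi_n$ consists of continuous functions converging uniformly on compacta, I would instead apply the parametric Theorem~\ref{anconvprod} with $\mathcal{D}$ ranging over compact subsets of $\Omega\setminus\Xi_{\widetilde\phi}$: it yields the \emph{continuity} of $\hat\w(\x)$ and the \emph{uniform} (on compacta) convergence of the projective automorphisms $T_n(\x)\cdots T_k(\x)$ to $\hat\u_{max}(\x)\hat\w\t(\x)$ on the complement of the hyperplane bundle $\widehat{H(\x)}$. One checks that the set where $\mathbf{w}\t(\x)\mathbf{F}_{k-1}(\x)=0$ is the same $\Si_I$ as before (here $\mathbf{F}_{k-1}(\x)$ is continuous since $I$ is). Uniform convergence of the ratio then follows by composing the uniform projective convergence with the continuous evaluation map sending a projective automorphism and a fixed point $[\mathbf{F}_{k-1}(\x)]$ off the hyperplane to the first-coordinate ratio of the image — a continuous operation on the relevant compact set, hence uniform convergence is preserved. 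The main obstacle I anticipate is bookkeeping around the exceptional locus: one must verify that $\Si_I$ defined via $\mathbf{w}(\x)$ is independent of the (non-canonical) choices of $\mathbf{w}$ and $\hat\u_{max}$ up to scaling, that it is genuinely the minimal set (which requires the converse direction from Theorem~\ref{th:Po}(i), i.e.\ that on $\Si_I$ the ratio really fails to converge to $\Psi_{max}$), and that passing to compact exhaustions of the open set $\Omega\setminus\Xi_{\widetilde\phi}$ is harmless — plus confirming that the companion-matrix eigenvector never degenerates, which is exactly where hypothesis~(i) ($\phi_{k,n}\neq0$) and closedness of $\Xi_k$ are used.
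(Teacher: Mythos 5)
Your proposal follows essentially the same route as the paper: pass to the companion-matrix form, apply Theorem~\ref{th:linop} pointwise to identify $\Si_I$ as the locus where the initial tuple lies in the hyperplane of slow growth $\w\t(\x)I(\x)=0$, and use the parametric/uniform version (Corollary~\ref{cor:linop} and the arguments behind Theorem~\ref{anconvprod}) for part~(ii). The only cosmetic difference is that the paper obtains uniformity in~(ii) by normalizing the matrix products $K_n(p)$ by their leading eigenvalue and invoking Vitali's theorem, whereas you compose the uniform projective convergence with a continuous evaluation map; both rest on the same key estimate.
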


\begin{remark} Notice that since we assumed that  convergence in (ii) is uniform  then each 
$\widetilde \phi_{i}(\x)$ is continuous in $\Omega$. Moreover, since a uniformly converging sequence of  analytic functions necessarily converges to a analytic  function one has that (ii) holds in the  analytic category as well. To get the latter result in the smooth category one should require the uniform convergence   
$\overline \phi_{n}\rightrightarrows \widetilde\phi$ together with their partial derivatives of all orders.
\end{remark} 

\begin{definition}
The set $\Si_{I}$ introduced in Theorem~\ref{th:Exist} is called the 
{\em set of slowly growing initial conditions}
(cf.~Definition~\ref{df:slow}). If the functional coefficients
in~\eqref{eq:General} are fixed (i.e., independent of $n$) then
        $\Si_{I}$ is exactly the set of all points $p\in \Omega$ such that
        the initial $k$-tuple $I(p)=\big(f_{0}(p),\ldots,f_{k-1}(p)\big)$ is slowly
        growing with respect to the recurrence~\eqref{eq:General} evaluated at
$p$.  
\end{definition}

\begin{remark}
The exact description of $\Si_{I}$ in the case of varying coefficients can
be found in the proof of Theorem~\ref{th:Exist}, see \S 2. Note that 
convergence of $\frac{f_{n+1}(\x)}{f_{n}(\x)}$ to the leading root of~\eqref{eq:Symb}
might actually occur at some points $\x$ lying in $\Xi_{\widetilde\phi}$ where the leading
root is still unique but multiple.
\end{remark}

When both the coefficients
$\phi_{i,n}(z)$ and the functions $f_{n}(z)$ are complex analytic one can 
associate
to each meromorphic ratio $\frac{f_{n+1}(z)}{f_{n}(z)}$ the following useful
complex-valued distribution $\nu_{n}$, see \cite[p.~249]{BeG}. (Since we will only use this distribution in one-dimensional case we will define it for $\bC$. For the multi-dimensional case consult  e.g. \cite {Ts}.)

\begin{definition}\label{def1} 
Given a  meromorphic function $g$ in some open set $\Omega\subseteq \bC$ we
construct its (complex-valued) {\em residue distribution} $\nu_{g}$ as
follows. Let $\{z_{m}\mid m\in \bN\}$ be the (finite or infinite) set of all
the poles of $g$ in $\Omega$. Assume that the Laurent
     expansion of $g$ at $z_{m}$ has the form $g(z)=\sum_{-\infty<l\le
     l_{m}}\frac{T_{m,l}}{(z-z_{m})^{l}}$.
     Then the distribution $\nu_{g}$ is given by
\begin{equation}\label{resid}
     \nu_{g}=\sum_{m\ge 1}\left(\sum_{1\le l\le
     l_{m}}\frac{(-1)^{l-1}}{(l-1)!}T_{m,l}\frac{\partial^{l-1}}{\partial
     z^{l-1}}\delta_{z_{m}}\right),
\end{equation}
     where $\delta_{z_{m}}$ is the Dirac mass at $z_{m}$. The above 
sum is meaningful as a distribution on $\Omega$ since it is
     locally finite there.
\end{definition}

\begin{remark}
The distribution $\nu_{g}$ is a complex-valued {\em measure}
if and only if $g$ has all simple poles, see \cite[p.~250]{BeG}. If the
latter holds then in the notation of Definition~\ref{def1} the value of this
complex measure at $z_{m}$ equals $T_{m,1}$, i.e., the residue of $g$ at
$z_{m}$.
\end{remark}




\begin{definition}\label{def3}
If $\{f_{n}(z)\}$ consists of functions which are analytic in $\Omega$ 
and $\nu_n$ denotes the
residue distribution of the meromorphic function
$\frac{f_{n+1}(z)}{f_{n}(z)}$ in $\Omega$ then the limit
    $\nu=\lim_{n\to\infty}\nu_{n}$ (if it exists in the sense
    of weak convergence) is called  the
{\em asymptotic ratio distribution} of the family $\{f_{n}(z)\}$.
\end {definition}

\begin{remark} Notice that the support of $\nu$ describes the asymptotics of the  zero loci of the family $\{f_n(z)\}$. 

\end{remark}Ê

\begin{proposition}\label{pr:2} Under the assumptions of Theorem~\ref{th:Exist} (ii) 
 in the complex analytic category the following holds:
\begin{enumerate}
\item[(i)]  The support of $\nu$ belongs to $\Xi_{\widetilde\phi}$, where, as
before,  $\Xi_{\widetilde\phi}$ denotes  the induced maxmod-discriminant. The set $\Sigma_{I}$ of slowly
growing initial conditions is irrelevant to the support of $\nu$. 
\item[(ii)] Suppose that there exists a nonisolated  
point $p_0\in \Xi_{\widetilde\phi}$ such that equation \eqref{eq:Symb} considered at 
$p_0$ has the property that among its roots with maximal 
absolute value there are at least two with the same maximal multiplicity. 
If the sequence 
$\left\{\frac{f_{n+1}(p_0)}{f_{n}(p_0)}\mid n\in \bZ_+\right\}$ diverges 
then the support of $\nu$ coincides with $\Xi_{\widetilde\phi}$. 
\end{enumerate}
\end{proposition}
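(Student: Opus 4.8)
First I would record the basic link between the residue distribution and the Cauchy--Riemann operator: for $g$ meromorphic on $\Omega$ one has $\nu_{g}=\tfrac1\pi\,\bar\partial g$ in $\mathcal{D}'(\Omega)$ (this is just \eqref{resid} rewritten via $\bar\partial\,(z-a)^{-1}=\pi\delta_{a}$ and, more generally, $\bar\partial\,(z-a)^{-l}=\tfrac{\pi(-1)^{l-1}}{(l-1)!}\,\partial^{l-1}\delta_{a}$). Hence $\nu_{n}=\tfrac1\pi\,\bar\partial(f_{n+1}/f_{n})$, and, $\bar\partial$ being continuous on $\mathcal{D}'$, $\operatorname{supp}\nu$ is precisely the set where the $\mathcal{D}'$-limit of $f_{n+1}/f_{n}$ fails to be holomorphic. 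By Theorem~\ref{th:Exist}(ii) this limit equals $\Psi_{max}$, locally uniformly, on $\Omega\setminus(\Xi_{\widetilde\phi}\cup\Sigma_{I})$, and $\Psi_{max}$ is holomorphic on $\Omega\setminus\Xi_{\widetilde\phi}$ (the leading root of \eqref{eq:Symb} is simple there); so the natural candidate for $\operatorname{supp}\nu$ is $\Xi_{\widetilde\phi}$. Part (i) is then the inclusion $\subseteq$, part (ii) the inclusion $\supseteq$.

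For part (i) the point is that the zeros of $f_{n}$ --- i.e.\ the poles of $f_{n+1}/f_{n}$, i.e.\ the support of $\nu_{n}$ --- do not accumulate anywhere in $\Omega\setminus\Xi_{\widetilde\phi}$. If $p\notin\Xi_{\widetilde\phi}\cup\Sigma_{I}$ this is immediate: on a small compact neighbourhood of $p$, $f_{n+1}/f_{n}\to\Psi_{max}$ uniformly, so it is pole-free there for $n$ large, i.e.\ $f_{n}$ is zero-free. If $p\in\Sigma_{I}\setminus\Xi_{\widetilde\phi}$ I would instead invoke Theorem~\ref{anconvprod}: the recurrence is maxmod-generic near $p$, so on a small circle around $p$ --- which avoids $\Sigma_{I}$, a thin set (in the one-variable analytic case simply discrete) --- the companion vectors $[f_{n}:f_{n+1}:\dots:f_{n+k-1}]$ converge uniformly in $\P^{k-1}$ to $\hat\u_{max}$, whose first homogeneous coordinate is nonzero; hence $f_{n}$ is zero-free on that circle for $n$ large, and a Rouch\'e / normal-families argument then shows $f_{n}$ is zero-free inside it too for $n$ large. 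In either case $\nu_{n}$ carries no mass near $p$ for large $n$, so $\operatorname{supp}\nu\subseteq\Xi_{\widetilde\phi}$, and the position of $\Sigma_{I}$ is irrelevant. The one delicate step is the last one: Theorem~\ref{anconvprod} only gives uniformity away from $\Sigma_{I}$, so one really does have to exclude zeros of $f_{n}$ drifting in toward $\Sigma_{I}$ as $n\to\infty$.

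For part (ii) I would argue by contradiction. Suppose $\operatorname{supp}\nu\neq\Xi_{\widetilde\phi}$, so $\nu$ vanishes on some ball $B$ meeting $\Xi_{\widetilde\phi}$; take $B$ near the point $p_{0}$ of the hypothesis and let $\Psi_{a}\neq\Psi_{b}$ be two roots of \eqref{eq:Symb} at $p_{0}$ of maximal modulus and of the same maximal multiplicity. The divergence of $\{f_{n+1}(p_{0})/f_{n}(p_{0})\}$ forces the coefficients of both these roots in the expansion \eqref{eq:leadasymp} of $f_{n}(p_{0})$ to be nonzero, i.e.\ the competition is genuine; in the typical situation this makes $\Xi_{\widetilde\phi}$ near $p_{0}$ a smooth real hypersurface $\{\rho=0\}$ with $\rho=|\Psi_{a}|^{2}-|\Psi_{b}|^{2}$, on whose two sides $\Psi_{max}$ equals the simple holomorphic roots $\Psi_{a}$, resp.\ $\Psi_{b}$. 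Choose a small closed disk $D\subseteq B$ whose boundary circle is transverse to $\{\rho=0\}$, avoids $\Sigma_{I}$ and $\{\Psi_{a}=\Psi_{b}\}$, and is centred at a point $q_{0}\in\{\rho=0\}$ with $\Psi_{a}(q_{0})\neq\Psi_{b}(q_{0})$ (all these excluded loci are thin, so such a $D$ exists). For $n$ large $f_{n}$ is zero-free on and near $\partial D$, so by the residue theorem, together with a cut-off $\chi\in C_{c}^{\infty}(B)$ with $\chi\equiv1$ on $D$, one gets $\langle\nu_{n},\chi\rangle=\tfrac1{2\pi i}\oint_{\partial D}\tfrac{f_{n+1}}{f_{n}}\,dz$; letting $n\to\infty$, the left side $\to\langle\nu,\chi\rangle=0$ while the right side $\to\tfrac1{2\pi i}\oint_{\partial D}\Psi_{max}\,dz=\tfrac1{2\pi i}\int_{\partial D\cap\{\rho>0\}}(\Psi_{a}-\Psi_{b})\,dz$ (since $\oint_{\partial D}\Psi_{b}\,dz=0$ by holomorphy), and this last quantity is, for $D$ small, comparable to $|\Psi_{a}(q_{0})-\Psi_{b}(q_{0})|\cdot\operatorname{diam}D\neq0$ --- a contradiction. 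Thus every point of $\Xi_{\widetilde\phi}$ near $p_{0}$ lies in $\operatorname{supp}\nu$; I would then propagate this along $\Xi_{\widetilde\phi}$ (the jump $\Psi_{max}^{+}-\Psi_{max}^{-}$ is a nonvanishing real-analytic function along the hypersurface stratum, whose closure contains the exceptional stratum where the leading root is unique but multiple) and, using that $\operatorname{supp}\nu$ is closed together with connectedness of $\Xi_{\widetilde\phi}$, conclude $\operatorname{supp}\nu=\Xi_{\widetilde\phi}$.

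I expect two steps to be the real work. In part (i) it is the uniform-in-$n$ exclusion of zeros of $f_{n}$ in a \emph{full} neighbourhood of the slow-growth locus $\Sigma_{I}$. In part (ii) it is, first, converting the divergence of the ratio at the single point $p_{0}$ into a genuine jump of $\Psi_{max}$ along a whole hypersurface piece of $\Xi_{\widetilde\phi}$ through $p_{0}$ --- including the separate treatment of the degenerate case in which $\Xi_{\widetilde\phi}$ contains an open set near $p_{0}$ (which happens exactly when $\Psi_{b}\equiv\zeta\Psi_{a}$ with $|\zeta|=1$ constant) --- and, second, the passage from this local statement to all of $\Xi_{\widetilde\phi}$, which uses the global geometry of the induced maxmod-discriminant.
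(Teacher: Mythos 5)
Your architecture is sound and, for part (ii), genuinely different from the paper's. The paper disposes of (i) in two lines: since $\Si_{I}\subseteq\Omega\setminus\Xi_{\widetilde\phi}$, the leading root $\Psi_{max}$ is analytic in a neighbourhood of $\Si_{I}$, so the residue distribution of the limit vanishes there and $\Si_{I}$ drops out of $\operatorname{supp}\nu$; your version makes explicit the point the paper suppresses, namely that one must still rule out zeros of $f_{n}$ drifting onto $\Si_{I}$ with non-negligible residues. For (ii) the paper does not integrate over contours at all: it shows that $f_{n+1}(p)/f_{n}(p)$ diverges at almost every $p\in\Xi_{\widetilde\phi}$, by analytically continuing along $\Xi_{\widetilde\phi}$ the condition that the limiting recurrence is of nondominant type (Definition~\ref{domin}, Lemma~\ref{lm:basrec}\,(iv)) from the nonisolated point $p_{0}$, and reads off $\operatorname{supp}\nu=\Xi_{\widetilde\phi}$ from that. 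Your jump-of-$\Psi_{max}$ Stokes computation is the mechanism the paper uses only later, in \S 4, to compute the density of $\nu$; using it as the proof of (ii) is a legitimate and more quantitative alternative, since it produces an explicit lower bound for $\langle\nu,\chi\rangle$.

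Three steps, however, need repair. First, in (i), ``zero-free on a circle around $p$'' does not give ``zero-free inside'': Rouch\'e needs a comparison function, and a normal-families argument needs local boundedness of $f_{n+1}/f_{n}$ inside the disk, which is exactly what is in question. You would need, e.g., the argument-principle identity $Z_{D}(f_{n+1})-Z_{D}(f_{n})=\frac{1}{2\pi i}\oint_{\partial D}d\log(f_{n+1}/f_{n})$ together with an a priori bound on $Z_{D}(f_{n})$, or else run the projective convergence of Theorem~\ref{anconvprod} on a full punctured neighbourhood of $p$ rather than on a single circle. Second, in (ii), your circle $\partial D$ is centred on $\Xi_{\widetilde\phi}$ and transverse to it, so it crosses $\Xi_{\widetilde\phi}$ in two points; near those points neither the uniform convergence $f_{n+1}/f_{n}\rightrightarrows\Psi_{max}$ nor the zero-freeness of $f_{n}$ is available (the zeros of $f_{n}$ accumulate precisely on $\Xi_{\widetilde\phi}$), so passing to the limit in $\oint_{\partial D}(f_{n+1}/f_{n})\,dz$ requires a separate uniform-integrability estimate that you do not supply. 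Third, the final globalization cannot rest on connectedness of $\Xi_{\widetilde\phi}$: the induced maxmod-discriminant need not be connected (in \S 4, Case 3, it is a segment together with a circle, and nothing forces the two components to meet in general). The paper globalizes instead through analyticity of the data --- the nondominance condition holds on a subset of $\Xi_{\widetilde\phi}$ accumulating at $p_{0}$ and hence, by analytic continuation, almost everywhere on $\Xi_{\widetilde\phi}$ --- and your connectedness step should be replaced by an argument of that type.
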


\begin{figure}[!htb]
\centerline{\hbox{\epsfysize=5.0cm\epsfbox{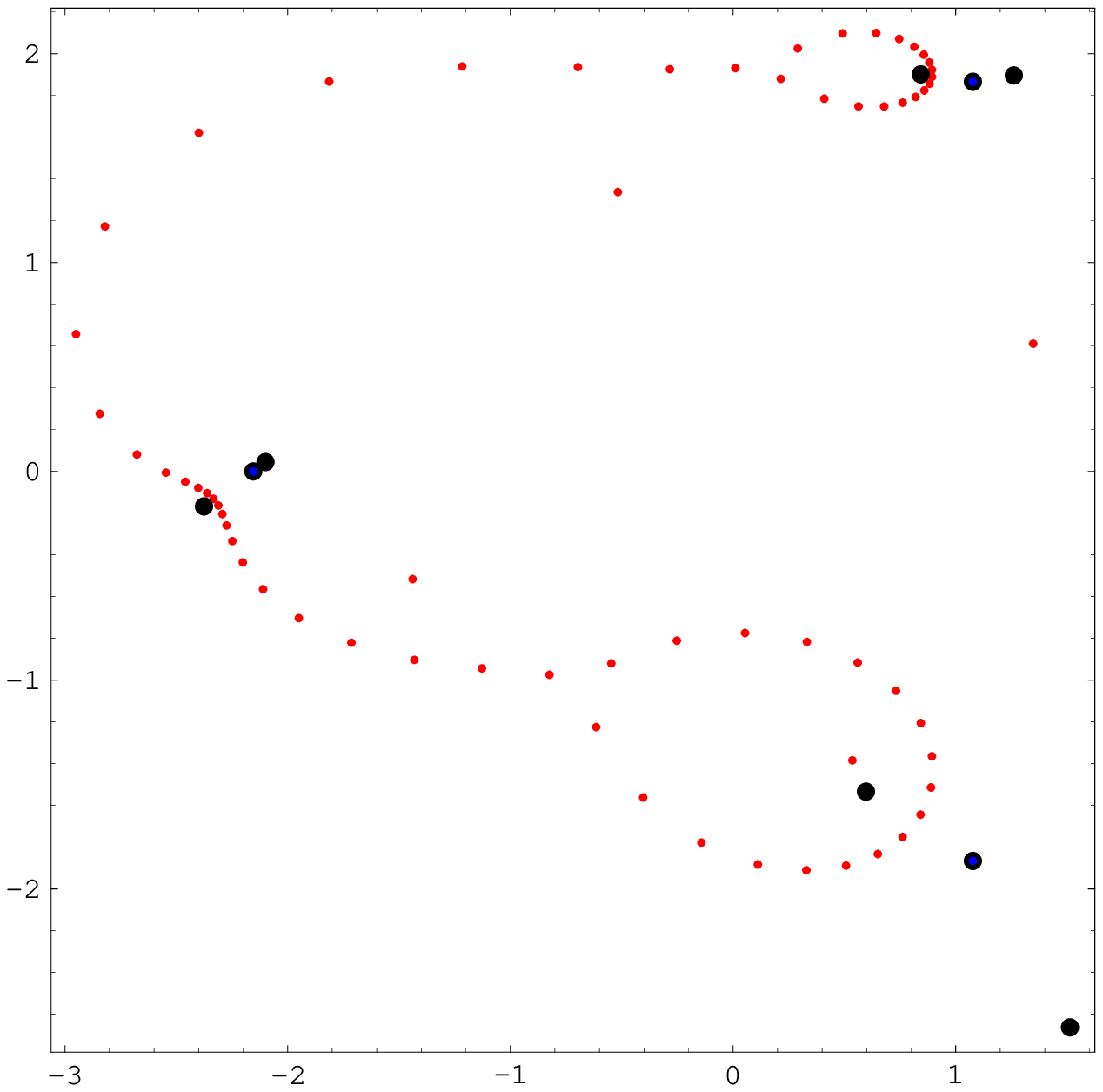}}
\hspace{0.8cm}\hbox{\epsfysize=5.0cm\epsfbox{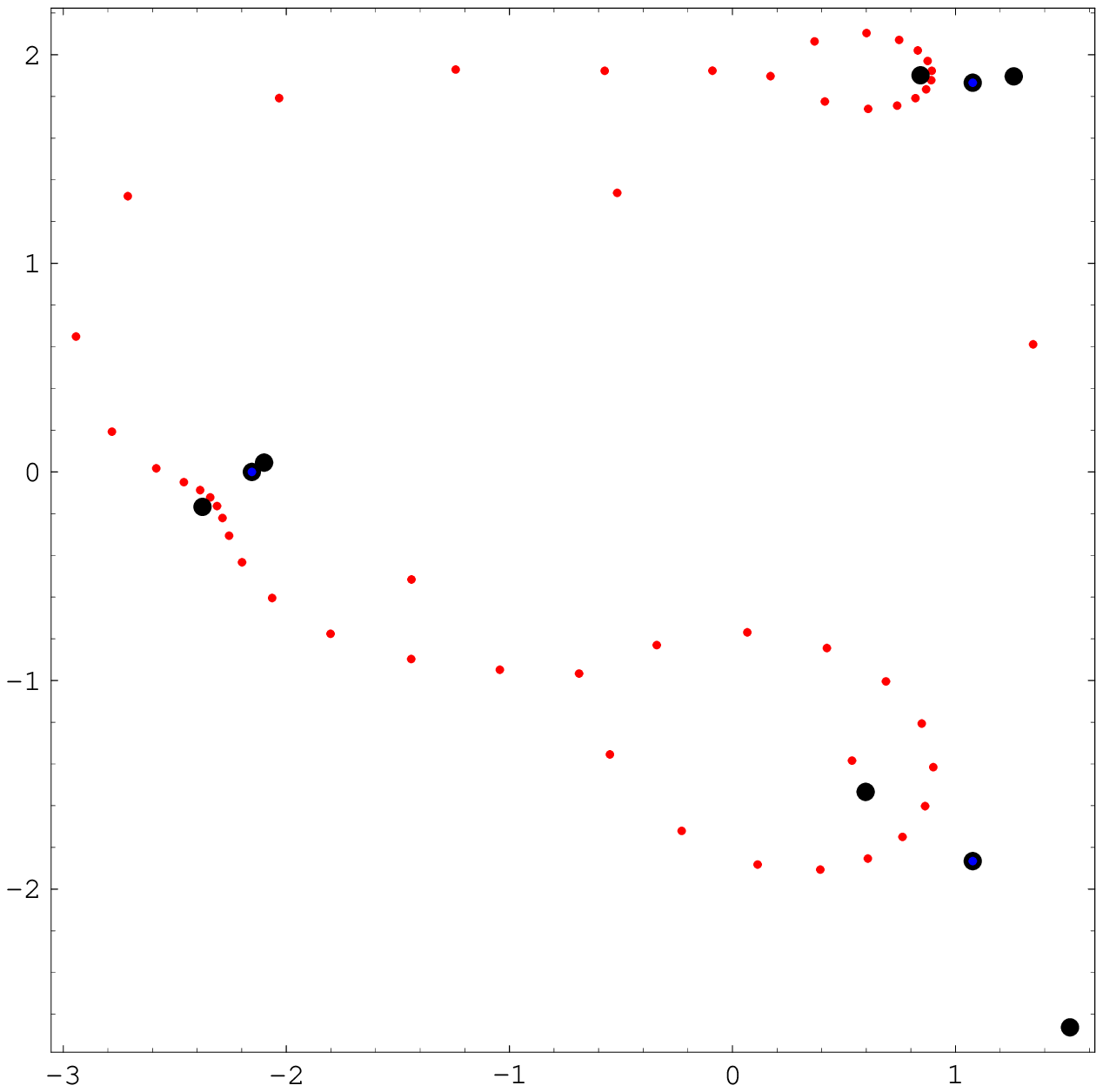}}}
\centerline{\hbox{\epsfysize=5.0cm\epsfbox{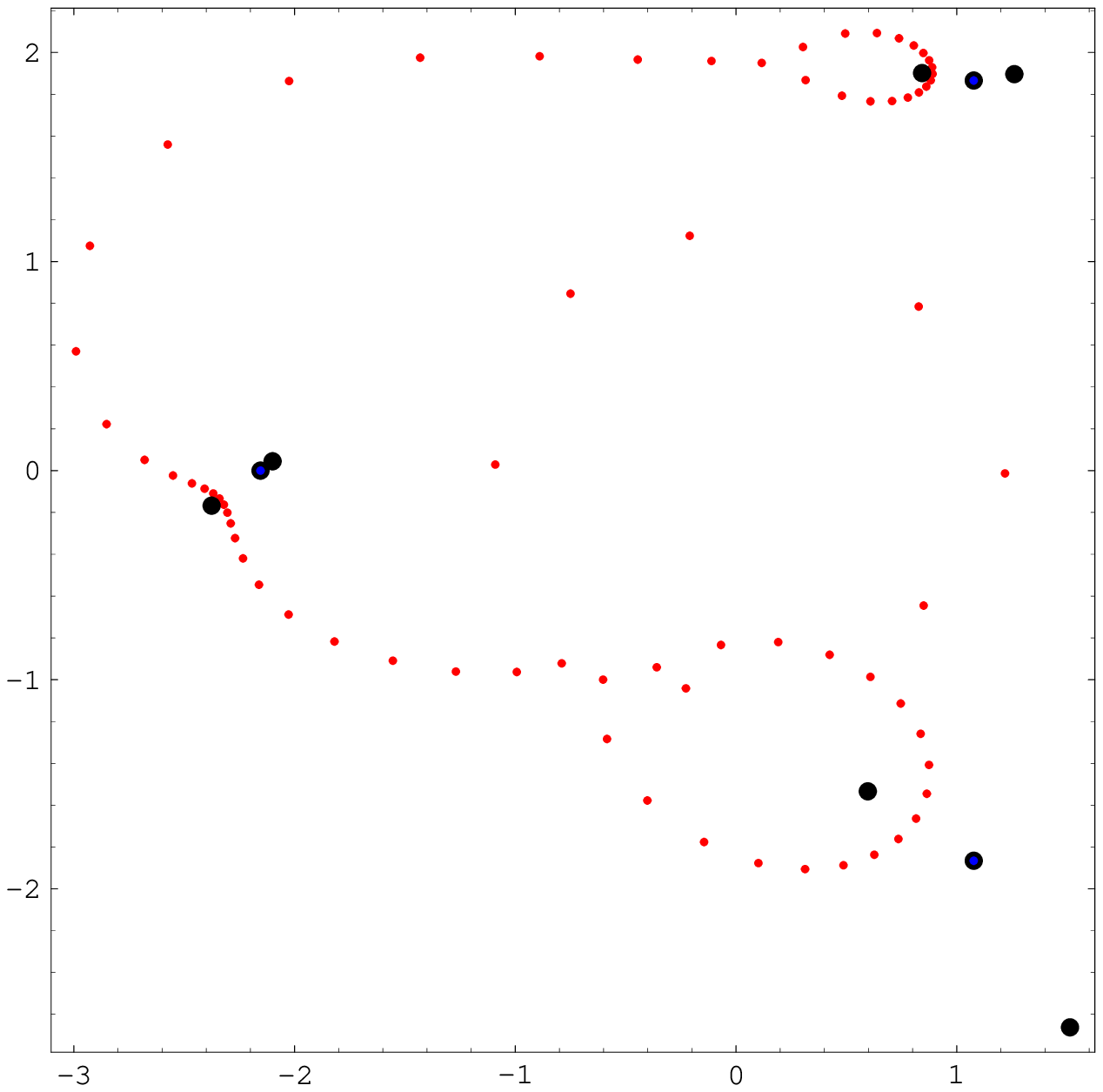}}}
\caption{Zeros of polynomials satisfying the 4-term recurrence
relation $p_{n+1}(z)=p_{n}(z)+(z+1)(z-i)p_{n-1}(z)+(z^3+10)p_{n-2}(z)$}
\label{fig1}
\end{figure}

Let us illustrate the latter result in a concrete situation. Consider a sequence of polynomials $\{p_n(z)\}$ satisfying the 4-term recurrence relation $$p_{n+1}(z)=p_{n}(z)+(z+1)(z-i)p_{n-1}(z)+(z^3+10)p_{n-2}(z)$$ 
with fixed coefficients and starting with some initial triple of polynomials $I=(p_0(z),p_1(z),p_2(z))$. Consider the sequence $\{\bZ(p_n)\}$ of the zero loci of $p_n(z)$. Then, one can roughly divide the zeros in $\bZ(p_n)$ in 2 parts. The zeros in the first part fill  when $n\to \infty$ the maxmod-discriminant  $\Xi_{\widetilde \phi}\subset \bC$ which is a continuous curve. The second part consisting of finitely many points depends of the initial triple and represents the set $\Si_I$ of slowly growing conditions.

\begin{expl1}
 The two
upper pictures show the
zeros of $p_{64}(z)$ and $p_{45}(z)$ for the same initial triple
$p_{0}(z)=0,\;p_{1}(z)=z^4-5i,\;p_{2}(z)=z$. The lower picture shows
the zeros of $p_{64}(z)$ for the same recurrence relation but with another 
initial triple $p_{0}(z)=z^{8}-z^5+i,\;
p_{1}(z)=z-5i,\;p_{2}(z)=5iz^2+z-10$.  One observes that on all three pictures
the zeros split into $2$ parts where the first part  forms a pattern close to a smooth curve $\Xi_{\widetilde\phi}$ 
and a the second part consists of a 
number of isolated points. On the upper pictures there are four
isolated points which practically coincide on both pictures
although the polynomials themselves are coprime. On the lower picture there
are seven isolated points which also form a very stable set, as these points
hardly change if one takes different polynomials $p_{n}(z)$ with the same
initial triple. The nine fat points in these three pictures are the branching points of
the symbol equation $\Psi^3=\Psi^2+(z+1)(z-i)\Psi+(z^3+10).$
\end{expl1}

\begin{remark} 
The isolated points on Figure \ref{fig1} have a 
strong resemblance with the spurious poles that were considered in a 
substantial number of papers on Pad\'e approximation, see e.g.~\cite{Sta} 
and references therein. The study of the exact relation 
between these two objects  can be found  in \cite{BSh}.  
\end{remark}


This paper is organized as follows. In \S 2 we  prove our parametric  Poincar\'e-Perron theorem and   settle the remaining results in \S 3. In \S 4 we consider concrete examples of $3$-term recurrence
relations with polynomial coefficients related to the theory of biorthogonal
polynomials~\cite{IM}.  In \S 5
we discuss a number of related topics and open problems. Finally, in appendix we study the topological structure of the
standard maxmod-discriminant $\Xi_{k}\subset Pol_{k}$. 

\medskip 
\noindent
{\em Acknowledgements.} The authors want to thank the anonymous referee for the constructive criticism  which allowed us to substantially  improve the quality and the clarity of our exposition.  


 \section{Proving  parametric  Poincar\'e-Perron theorem.}

 For $\F=\R,\C$ denote by $\F^k,M_k(\F),\gl_k(\F)$ the
 $k$-dimensional  vector space, the algebra of $(k\times k)$-matrices
 and the subgroup of $k\times k$ invertible matrices over the
 field $\F$.  Denote by $||\cdot ||$ any vector norm on $\F^k$ or on $M_k(\F)$.
 Let $||\cdot ||_2$ be
 the $\ell_2$-norm on $\F^k$ induced by the standard
 inner product $\lan \x,\y\ran:=\y^*\x$ on $\F^k$ and denote by $||\cdot ||_2$ the induced
 operator norm on $M_k(\F)$.
 For $T\in M_k(\C)$ denote by $\rho(T)$ the spectral radius of $T$, i.e. the maximal modulus of the
 eigenvalues of $T$.
 As in \cite{Fr2}, the spaces
 $\P\F^k, \P M_k(\F), \P\gl_k(\F)$ are
 obtained by identifying the orbits of the action of
 $\F^*:=\F\backslash\{0\}$ on the nonzero elements of the corresponding sets by multiplication. 
 Then $\P\R^k,\P M_k(\R)$
 and $\P\C^k,\P M_k(\C)$ are compact real and complex manifolds
 respectively.  To keep our notation  standard  we set $\P^{k-1}=\P\C^k$.
 For $\x\in \F^k\backslash\{\0\}, T\in M_k(\F)\backslash\{0\}$  denote by
 $\hat\x, \hat T$ the induced elements in $\P\F^k, \P M_k(\F)$ respectively.
 Furthermore, $\hat T$ can be viewed as a map from $\P\F^k\backslash \widehat{\ker T}$ to $\P\F^k$.
 Denote by $d(\cdot,\cdot): \P^{k-1}\times \P^{k-1}\to [0,\infty)$ the Fubini-Study metric on $\P^{k-1}$, see e.g.  \cite{GH}.

 Consider an iteration scheme
 \begin{equation}\label{itsch}
 \x_n:=T_n\x_{n-1},\quad \x_0\in \F^k,\;T_n\in M_k(\F), n\in\N.
 \end{equation}
 This system is called \emph{convergent} if $\x_n,n\in\N$ is a
 convergent sequence for each $\x_0\in\F^k$.  This is equivalent
 to the convergence of the infinite product $...T_nT_{n-1}...T_2T_1$,
 which is defined as the limit of $T_nT_{n-1}...T_2T_1$ as
 $n\to\infty$.  For the stationary case $T_n=T, n\in \N$ the
 necessary and sufficient conditions for convergency are well known.
Namely, (i) the spectral radius $\rho(T)$ can not exceed $1$; 
 (ii) if $\rho(T)=1$, then $1$ is an eigenvalue of $T$ and all
 its Jordan blocks have size $1$;  (iii) all other eigenvalues $\lambda$ of
 $T$ different from $1$ satisfy
 $|\lambda|<1$.

 In some instances, as e.g. for problems related to  the Lyapunov exponents in dynamical systems, 
  one is interested if the line spanned by the vector $\x_i$ converges
 for all $\x_0\ne 0$ in some homogeneous open Zariski set in $\F^k$.
 If this condition holds we call (\ref{itsch}) \emph{projectively
 convergent}.

 For the stationary case $0\ne T_n=T\in M_k(\C)$ one can easily check 
 that (\ref{itsch}) is projectively convergent if and only if
 among all the eigenvalues $\lambda$ of $T$ satisfying
 $|\lambda|=\rho(T)$, there is exactly one eigenvalue $\lambda_0$
 which has Jordan blocks of the maximal size.  

 The main point of Theorem \ref{anconvprod} is to show that $\hat\w(\x)$ is continuous in $\mathcal{D}$.
 We  use some technique developed in \cite{Fr2}, in particular,  the arguments of the proof of Theorem \ref{th:linop},  \cite[Theorem 1.2, page 258 ]{Fr2},
 to obtain the continuity of $\hat\w(\x)$.
 Namely, let  $\lambda_{max}(\x)$ is the unique simple eigenvalue of $T(\x)$ with the maximal modulus.
 Then $\lambda_{max}(\x)$ is continuous and nonvanishing in $\mathcal{D}$.   To prove Theorem \ref{anconvprod},
  we replace $T(\x)$ and $T_n(\x)$ by $\frac{1}{\lambda_{max}(\x)} T(\x)$ and $\frac{1}{\lambda_{max}(\x)} T_n(\x)$ for each $n$, respectively.
 Thus we can assume that for each $\x\in \mathcal{D}$ the number $\lambda_{max}=1$ is the simple eigenvalue of $T(\x)$ of the maximal modulus.
 Let $\u(\x),\v(\x)$, $\ne \0$ for each $\x\in\mathcal{D}$, be the right and resp. the left eigenvectors of $T(\x)$ corresponding to $1$, i.e. 
 \begin{equation}\label{defuzvz}
 T(\x)\u(\x)=\u(\x), \quad T\t(\x) \v(\x)=\v(\x), \quad \u\t(\x)\v(\x)=1, \quad \x\in\mathcal{D}.
 \end{equation}
 Since $\u(\x),\v(\x)$ can be chosen continuous  in $\mathcal{D}$ it follows that $\hat \u(\x),\hat \v(\x)\in\P^{n-1}$ are continuous  in $\mathcal{D}$ as well.  In what follows we fix a norm $\|\cdot\|$ on $M_k(\C)$  satisfying for any  $T\in M_k(\C)$ the condition $\|T\|=\|T\t\|$ . 
 
 \medskip 
We need two additional Lemmas; the next one generalizes the inequalities given on pages 254-255 of \cite{Fr2}.
 \begin{lemma}\label{z0epsdeltpert}  Let $\{T_n(\x)\}_{n\in\N}$ be a sequence of  complex-valued   invertible
  $(k\times k)$-matrices, with continuous  entries for $\x\in \mathcal{D}\subset\R^d$.  Assume that this sequence converges uniformly,
  on any compact set in $\mathcal{D}$, to a matrix $T(\x)$.
 Suppose furthermore that for each $\x\in\mathcal{D}$,  the number $\lambda_{max}=1$ is a simple eigenvalue of $T(\x)$, and all other eigenvalues
 of $T(\x)$ lie in the open unit disk $|\lambda|<1$.   Then for any fixed $\x_0$ and for each $\varepsilon>0$ there exists $\delta=\delta(\varepsilon)>0$
 and $N=N(\varepsilon),m=m(\varepsilon)\in\N$ such that
 \begin{eqnarray}\label{z0epsdeltpert1}
 \|T_{n+m}(\x)T_{n+m-1}(\x)\ldots T_{n+1}(\x)-\u(\x_0)\v\t(\x_0)\|<\varepsilon,\\
 \textrm{ for each } n\ge N \textrm{ and }|\x-\x_0|\le \delta. \nonumber
 \end{eqnarray}
 \end{lemma}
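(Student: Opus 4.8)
\textbf{Proof strategy for Lemma~\ref{z0epsdeltpert}.}
The plan is to first reduce the problem at the single point $\x_0$, exploiting the fact that $1$ is a simple eigenvalue of $T(\x_0)$ with all other eigenvalues strictly inside the unit disk, and then to upgrade the estimate to a neighbourhood of $\x_0$ by a compactness/uniform-convergence argument. At $\x_0$ itself, the spectral gap gives $\big(T(\x_0)\big)^m \to \u(\x_0)\v\t(\x_0)$ as $m\to\infty$, with geometric rate governed by $\max\{|\lambda|:\lambda\ne 1,\ \lambda\in\spec T(\x_0)\}<1$; this is exactly the classical power-method estimate and is what the inequalities on pages~254--255 of~\cite{Fr2} encode. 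So I would fix $m=m(\varepsilon)$ large enough that $\|T(\x_0)^m-\u(\x_0)\v\t(\x_0)\|<\varepsilon/3$.

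Next I would transfer this from the constant matrix $T(\x_0)$ to a long product of the varying matrices $T_{n+j}(\x)$ for $\x$ near $\x_0$. The key observation is that on a fixed compact neighbourhood $|\x-\x_0|\le\delta_0$ the matrices $T_n(\x)$ are uniformly bounded (by uniform convergence to the continuous $T(\x)$), so products of $m$ of them are uniformly bounded by some constant $C=C(m)$. Writing
\[
T_{n+m}(\x)\cdots T_{n+1}(\x)-T(\x_0)^m
\]
as a telescoping sum of $m$ terms, each term being a product in which exactly one factor is replaced by $T_{n+j}(\x)-T(\x_0)$, I can bound this by $mC\cdot\sup_{|\x-\x_0|\le\delta}\sup_{n\ge N}\|T_{n+j}(\x)-T(\x_0)\|$. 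By the triangle inequality $\|T_{n+j}(\x)-T(\x_0)\|\le\|T_{n+j}(\x)-T(\x)\|+\|T(\x)-T(\x_0)\|$: the first summand is $<\varepsilon'$ for $n\ge N(\varepsilon')$ by uniform convergence on the compact set $|\x-\x_0|\le\delta_0$, and the second is $<\varepsilon'$ for $|\x-\x_0|\le\delta$ by continuity of $T(\cdot)$. Choosing $\varepsilon'$ so that $2mC\varepsilon'<\varepsilon/3$, and combining with $\|\u(\x_0)\v\t(\x_0)\|$ being a fixed constant, yields \eqref{z0epsdeltpert1} with the triangle inequality splitting the total error into the three $\varepsilon/3$ pieces (power-method convergence at $\x_0$; uniform convergence $T_n\to T$; continuity of $T$).

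The main obstacle, and the point that needs genuine care rather than routine estimation, is the first step: producing the geometric power-method estimate $\|T(\x_0)^m-\u(\x_0)\v\t(\x_0)\|\le K\theta^m$ with $\theta<1$ in a form robust enough that the constants $K,\theta$ can be taken to depend only on a compact neighbourhood of $\x_0$ (not on $\x$ pointwise). Here one uses the spectral projection: write $T(\x_0)=\u(\x_0)\v\t(\x_0)+S(\x_0)$ where $S(\x_0)$ is the complementary part with $\rho(S(\x_0))<1$ and $\u(\x_0)\v\t(\x_0)\cdot S(\x_0)=S(\x_0)\cdot\u(\x_0)\v\t(\x_0)=0$, so $T(\x_0)^m=\u(\x_0)\v\t(\x_0)+S(\x_0)^m$ and $\|S(\x_0)^m\|\to 0$. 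This is essentially the content of the cited inequalities in~\cite{Fr2}, so I would invoke them directly rather than reprove the Jordan-form bookkeeping; the only new input required here is that the normalization \eqref{defuzvz} can be carried out continuously, which was already arranged in the discussion preceding the lemma. Everything else is the telescoping estimate above, which is the kind of uniform-continuity argument that the uniform-on-compacts hypothesis is tailored for.
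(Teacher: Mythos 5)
Your proposal is correct and follows essentially the same route as the paper: fix $m$ so that the power-method limit $T(\x_0)^m\to\u(\x_0)\v\t(\x_0)$ gives a small error at the single point $\x_0$, then control $T_{n+m}(\x)\cdots T_{n+1}(\x)-T(\x_0)^m$ via uniform convergence and continuity of $T$ — your telescoping sum is just an explicit, quantitative version of the paper's appeal to continuity of the product map $(M_k(\C))^m\to M_k(\C)$. The worry in your last paragraph is unnecessary: the geometric estimate is only ever invoked at the fixed point $\x_0$, so no uniformity of the power-method constants over a neighbourhood is required.
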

 \proof  Since $1$ a simple eigenvalue of $T(\x_0)$ and all other eigenvalues of $T(\x_0)$ lie in the open unit disk,
 it follows that $\lim_{m\to\infty} T^m(\x_0)=\u(\x_0)\v\t(\x_0)$.
 Hence, there exists $m=m(\varepsilon)$ so that $\|T^m(\x_0)-\u(\x_0)\v\t(\x_0)\|<\frac{\varepsilon}{2}$.
 Since the product map $(M_k(\C))^m\to M_k(\C)$  sending a $m$-tuple $X_1,...,X_m$ to their product is continuous, there exists $\delta_1>0$,
 such that for $\|X_i-T(\x_0)\|\le \delta_1$ for $i=1,\ldots,m$ the inequality $\|X_1X_2\ldots X_m -T(\x_0)^m\|
 \le \frac{\varepsilon}{2}$
 holds.  Since $T_n(\x)$ converges uniformly to the continuous  $T(\x)$ on any compact subset of $\mathcal{D}$, it follows that
 there exists $\delta=\delta(\varepsilon)>0$ and $N=N(\varepsilon)\in\N$ such that $\|T_n(\x)-T(\x_0)\|\le
 \frac{\varepsilon}{2}$ for $n\ge N$ and $\|\x-\x_0\|\le \delta$.
 \qed

 Next we give the following modification of \cite[Lemma 5.1]{Fr2}.
  \begin{lemma}\label{rank1ma}  Let $E\in M_k(\C)$ be a 
 matrix of rank one with $\rho(E)>0$, i.e. $E=\v\u\t,\u\t\v=1$.
 Set $O_r:=\{\hat\x\in \P\C^k:\;d(\hat\x,\hat\v)\le r\}$
 such that $O_r\cap \widehat{\ker E}=\emptyset$.  Then
 $\hat E :O_r \to \{\hat\v\}$.
 Then there exists $\varepsilon=\varepsilon(r)$ such that the following conditions hold.
 Assume that $B\in M_k(\C)$ satisfies $\|B-E\|\le \varepsilon$.  Then
 \begin{enumerate}
 \item\label{rank1ma1} $d(\hat B \hat \x,\hat\v)\le \frac{r}{2}$ for each $\hat \x\in\O_r$.
 \item\label{rank1ma2} $d(\hat B\hat\x,\hat B\hat\y)\le \frac{1}{2} d(\hat\x,\hat\y)$ for each $\hat\x,\hat\y\in O_r$ .
 \end{enumerate}

 \end{lemma}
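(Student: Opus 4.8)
The plan is to exploit the explicit rank-one form $E=\v\u\t$. After rescaling $\v\mapsto\v/\|\v\|$, $\u\mapsto\|\v\|\u$ (which changes neither $E$ nor the normalization $\u\t\v=1$) we may assume $\|\v\|=1$, and, all norms on $M_k(\C)$ being equivalent, that $\|\cdot\|$ is the operator norm. Since $O_r$ is compact and disjoint from $\widehat{\ker E}=\widehat{\{\x:\u\t\x=0\}}$, the number
\[
c=c(r):=\min_{\hat\x\in O_r}\frac{|\u\t\x|}{\|\x\|}
\]
is strictly positive, and it is the only quantity through which the final $\varepsilon$ will depend on $r$. Write $B=E+\Delta$ with $\|\Delta\|=\|B-E\|\le\varepsilon$. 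For any representative $\x$ of a point of $O_r$ one has $E\x=(\u\t\x)\v$ (so indeed $\hat E:O_r\to\{\hat\v\}$), and
\[
B\x=(\u\t\x)\,(\v+\r_\x),\qquad \r_\x:=\frac{\Delta\x}{\u\t\x},\qquad \|\r_\x\|\le\frac{\varepsilon}{c},
\]
the last bound using $\|\Delta\x\|\le\varepsilon\|\x\|$ and $|\u\t\x|\ge c\|\x\|$. In particular $B\x\neq0$ once $\varepsilon<c$, so $\hat B$ is defined on $O_r$ and $\hat B\hat\x$ is represented by $\v+\r_\x$.

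Both assertions then reduce to the elementary Fubini--Study estimate: $\sin d(\hat a,\hat b)$ equals the distance from $a$ to the line $\C b$ divided by $\|a\|$, hence $\sin d(\hat a,\hat b)\le\|a-b\|/\|a\|$ for any nonzero $a,b\in\C^k$, together with $d\le\tfrac{\pi}{2}\sin d$ on $[0,\tfrac{\pi}{2}]$. Taking $a=\v+\r_\x$, $b=\v$ and assuming $\varepsilon\le c/2$ gives $d(\hat B\hat\x,\hat\v)\le C\,\|\r_\x\|\le C\varepsilon/c$ for an absolute constant $C$, so the first claim holds as soon as $C\varepsilon/c\le r/2$.

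For the contraction I compare the two representatives directly, which is what lets me avoid any discussion of geodesic convexity of $O_r$ (which can fail when $r$ is close to $\tfrac{\pi}{2}$). Given $\hat\x,\hat\y\in O_r$, choose unit representatives $\x,\y$ with phases arranged so that $\|\x-\y\|=2\sin\bigl(\tfrac12 d(\hat\x,\hat\y)\bigr)\le d(\hat\x,\hat\y)$. The same estimate with $a=\v+\r_\x$, $b=\v+\r_\y$ gives $d(\hat B\hat\x,\hat B\hat\y)\le C\|\r_\x-\r_\y\|$, while the algebraic identity
\[
\r_\x-\r_\y=\frac{(\u\t\y)\,\Delta(\x-\y)+\bigl(\u\t(\y-\x)\bigr)\,\Delta\y}{(\u\t\x)(\u\t\y)}
\]
combined with $|\u\t\x|,|\u\t\y|\ge c$, $|\u\t\y|\le\|\u\|$ and $\|\Delta\|\le\varepsilon$ yields $\|\r_\x-\r_\y\|\le\frac{2\|\u\|\,\varepsilon}{c^{2}}\,\|\x-\y\|$. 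Hence $d(\hat B\hat\x,\hat B\hat\y)\le\frac{2C\|\u\|\,\varepsilon}{c^{2}}\,d(\hat\x,\hat\y)$, and it suffices to take $\varepsilon=\varepsilon(r)$ so small that $\frac{2C\|\u\|\,\varepsilon}{c^{2}}\le\tfrac12$ on top of $C\varepsilon/c\le r/2$ and $\varepsilon\le c/2$.

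I do not expect a genuine obstacle: the whole argument is the rank-one splitting $B\x=(\u\t\x)(\v+\r_\x)$ plus the two elementary norm estimates above. The only point needing care is keeping the Fubini--Study bounds in the scale-free form ``$\sin d\le(\text{difference of representatives})/(\text{norm of a representative})$'', since this is exactly what reduces everything to linear algebra and requires nothing about the Riemannian geometry of $\P^{k-1}$ — in particular no convexity of balls, which for large $r$ would be problematic.
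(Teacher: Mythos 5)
Your proof is correct, but it takes a genuinely different route from the paper's. The paper argues softly: $\hat B$ is analytic on $O_r$ for $B$ near $E$, it is a small perturbation of $\hat E$, and since $\hat E$ collapses $O_r$ to the single point $\hat\v$, continuity gives condition (1) and the vanishing of the "spread" of $\hat E$ on $O_r$ gives condition (2); no explicit constants appear, and the contraction estimate in particular is asserted rather than computed (implicitly it rests on uniform convergence of the differential of $\hat B$ to that of $\hat E$, plus some way of joining $\hat\x$ to $\hat\y$ without leaving the region where that derivative bound holds). You instead exploit the rank-one structure directly via the splitting $B\x=(\u\t\x)(\v+\r_\x)$ with $\|\r_\x\|\le\varepsilon/c$, and the chordal Fubini--Study bound $\sin d(\hat a,\hat b)\le\|a-b\|/\|a\|$, which reduces both conditions to norm estimates on $\r_\x$ and $\r_\x-\r_\y$. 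What your approach buys is (a) a fully quantitative $\varepsilon(r)$ in terms of $c(r)=\min_{O_r}|\u\t\x|/\|\x\|$ and $\|\u\|$, and (b) a clean handling of the contraction property for \emph{all} admissible $r$, since comparing representatives directly avoids the geodesic-convexity issue that a derivative-based argument would have to address for large $r$; the price is a page of elementary but careful computation where the paper gets by with two lines of compactness. Both arguments are sound; yours is the more self-contained and would also yield explicit contraction rates if one needed them downstream (e.g.\ in the $l/2^l$ estimate in the proof of Theorem~\ref{anconvprod}).
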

 \proof  Clearly $\widehat {E\x}=\hat\v$ if $\u\t\x\ne 0$.  Hence
 $\hat E :O_r \to \{\hat\v\}$.  Recall that for any $B\in M_k(\C)\backslash\{0\}$ the transformation
 $\hat B:\P\C^k\backslash \widehat{\ker B}\to \P\C^k$ is analytic.   Hence there exists $\varepsilon_0>0$
 so that for $\|B-E\|\le \varepsilon_0,$ $B\ne 0,$ and $\widehat{\ker B}\cap O_r=\emptyset$.
 Thus, $\hat B$ is analytic on $O_r$.  One has that for a small $\epsilon\in(0,\epsilon_0)$ any $\hat B$ satisfying
 $\|B-E\|\le \varepsilon$ is a small perturbation of $\hat E$.  Since $\hat E O_r=\{\hat\v\}$ we deduce  condition
 \ref{rank1ma1} for any $\varepsilon\in (0,\varepsilon_1)$ for some $0<\varepsilon_1<\varepsilon_0$.
 Since $d(\hat E\hat \x,\hat E\hat \y)=0$ for any $\hat\x,\hat\y\in O_r$ we deduce  condition \ref{rank1ma2}
 for some $\varepsilon\in(0,\varepsilon_1)$.
 \qed

\medskip 
 \textbf{Proof of Theorem \ref{anconvprod}.}  Recall that \cite[Theorem 1.2]{Fr2} implies that
the map 
 $\widehat {T_n(\x)} \widehat {T_{n-1}(\x)}\ldots \widehat {T_2(\x)} \widehat {T_1(\x)}$,
 viewed as an automorphism of $\P^{k-1}$, converges pointwise to the transformation $\hat \u(\x)\hat \w\t(\x)$ on $\P^{k-1}\backslash H(\x)$. Now fix some $\x_0\in \mathcal{D}$.  Then there exists $\delta_1>0$ such that the eigenvectors $\u(\x),\v(x)\in \C^k$ satisfying
 (\ref{defuzvz}) can be chosen continuously  in $|\x-\x_0|\le \delta_1$. Since $\v\t(\x)\u(x)=1$ it follows that we can choose
 $\delta_2\in (0,\delta_1)$ such that $\v(\x_0)\u(\x)\ne 0$ for $|\x-\x_0|\le \delta_1$.  To show the continuity  of $\hat\w(\x)$
 in $|\x-\x_0|<\delta$ for some $\delta_3\in (0,\delta_2)$ it is enough to show that the continuity of the limit of
 $\widehat {\v\t(\x_0)}\widehat {T_n(\x)} \widehat {T_{n-1}(\x)}\ldots \widehat {T_2(\x)} \widehat {T_1(\x)}$
 in $|\x-\x_0|< \delta_3$.  Since each $T_j(\x)\in \gl(k,\C)$ is continuous, it is enough to show the continuity of the limit of
 $\widehat {\v\t(\x_0)}\widehat {T_{n+N}(\x)} \widehat {T_{n+N-1}(\x)}\ldots \widehat {T_{N+2}(\x)} \widehat {T_{N+1}(\x)}$
 in $|\x-\x_0|< \delta_3$.  One has to show that a subsequence of
  $\widehat {\v\t(\x_0)}\widehat {T_{n+N}(\x)} \widehat {T_{n+N-1}(\x)}\ldots \widehat {T_{N+2}(\x)} \widehat {T_{N+1}(\x)}$
 converges uniformly in $|\x-\x_0|\le \delta_3$ in the Fubini-Study metric $d(\cdot,\cdot)$ on $\P^{k-1}$.
 
 This is done by using Lemmas \ref{z0epsdeltpert}-\ref{rank1ma}. 
Namely,  let $E=\v(\x_0)\u\t(\x_0)$.  Choose $r>0, \varepsilon>0$ so that the conditions \ref{rank1ma1}-\ref{rank1ma2} of Lemma \ref{rank1ma}
 hold.  Next choose $N,\delta$ so  that the condition (\ref{z0epsdeltpert1}) holds.  Let $\delta_3=\min(\delta,\delta_2)$.
 Define
 $$B_j(\x)=T_{(j-1)m+N+1}\t(\x)\ldots T_{jm+N}\t(\x) \quad \textrm{ for } j=1,\ldots.$$

 Assume that  $|\x-\x_0|\le \delta_3$.  We claim that the sequence $\widehat{B_1(\x)}\ldots\widehat{B_l(\x)}\widehat{\v(\x_0)}$ converges uniformly  to $\w(\x)\in O_r$ 
 in the Fubini-Study metric.  First observe that (\ref{z0epsdeltpert1})
 yields that $\widehat{B_j(\x)}O_r\subset O_{\frac{r}{2}}$, by  condition \ref{rank1ma1} of Lemma \ref{rank1ma}.
 Hence $\hat \y_l(\x)=\widehat{B_1(\x)}\ldots\widehat{B_l(\x)}\widehat{\v(\x_0)}\subset O_{\frac{r}{2}}$.
 Condition \ref{rank1ma1} of Lemma \ref{rank1ma} yields that
 \begin{eqnarray*}
 d(\widehat{B_1(\x)}\ldots\widehat{B_l(\x)}\widehat{\v(\x_0)},\widehat{B_1(\x)}\ldots\widehat{B_{l+p}(\x)}\widehat{\v(\x_0)})\le\\
\le \frac{1}{2^l}d(\widehat{\v(\x_0)},\widehat{B_{l+1}(\x)}\ldots\widehat{B_{l+p}(\x)}\widehat{\v(\x_0)})\le \frac{l}{2^l}.
 \end{eqnarray*}
 Hence the sequence $\hat \y_l, l\in\N$ converges uniformly in $|\x-\x_0|\le \delta_3$ to $\hat \y(\x)$.
 Since each $\hat \y_l(\x)$ is continuous in $|\x-\x_0|\le \delta_3$ it follows that $\y(\x)$ is continuous in the open disk
 $|\x-\x_0|<\delta_3$.  \qed

\section{Proving  remaining results}\label{s2}



Consider  a recurrence relation with varying coefficients of the form
\begin{equation}\label{eq:varrec}
	u_{n+k}+\al_{1,n}u_{n+k-1}+\al_{2,n}u_{n+k-2}+\ldots+\al_{k,n}u_{n}=0
\end{equation}
where $\al_{k,n}\neq 0$. Assume that for all $i\in \{1,\ldots,k\}$ the one has 
$\lim_{n\to \infty}\al_{i,n}=:\alpha_{i}$ and denote the
limiting recurrence relation of~\eqref{eq:varrec} by
\begin{equation}\label{eq:limrec}
v_{n+k}+\al_{1}v_{n+k-1}
+\al_{2}v_{n+k-2}+\ldots
+\al_{k}v_{n}=0.
\end{equation}

\begin{lemma}\label{lm:varcoef}

If~\eqref{eq:limrec} is maxmod-generic then
     $\lim_{n\to\infty}\frac {v_{n+1}}{v_{n}}$ exists
     and equals ${ \la_{max}}$  on the complement of a complex
     hyperplane $H\subset \bC^k$, where $ \la_{max}$
denotes the leading spectral number of \eqref{eq:limrec}. 
\end{lemma}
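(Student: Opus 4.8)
The plan is to deduce this directly from the constant-coefficient structure theorem, since \eqref{eq:limrec} has constant coefficients and the statement is really a restatement of the classical theory in the language of maxmod-genericity. First I would apply Theorem~\ref{th:ordrec}(iii) to write every solution of \eqref{eq:limrec} in closed form
\[
v_n=\kappa_{max}\,\la_{max}^n+\sum_{j=2}^{r}P_j(n)\la_j^n ,
\]
where $\la_{max}=\la_1,\la_2,\dots,\la_r$ are the distinct spectral numbers of \eqref{eq:limrec}. Here maxmod-genericity is used twice: it forces $\la_{max}$ to be simple, so the polynomial attached to it reduces to the constant $\kappa_{max}$, and it gives $|\la_j|<|\la_{max}|$ for $2\le j\le r$. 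I would also record that $\la_{max}\neq 0$, since a vanishing leading root would force all spectral numbers to vanish, contradicting uniqueness and simplicity when $k\ge 2$ (and if $\al_k\neq 0$ no root is zero at all).

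Next I would pin down the hyperplane. The map sending an initial $k$-tuple $(v_0,\dots,v_{k-1})\in\bC^k$ to the vector of its coordinates in the basis $\{n^{s}\la_j^{n}\}$ of the $k$-dimensional solution space is a linear isomorphism; composing it with the coordinate projection onto the $\la_{max}$-slot yields a nonzero $\bC$-linear functional $\ell$ with $\ell(v_0,\dots,v_{k-1})=\kappa_{max}$. Set $H:=\ker\ell$. This is a genuine complex hyperplane: it is proper because the solution $v_n=\la_{max}^n$ has $\kappa_{max}=1$, and it has codimension exactly one because $\ell$ is a single nontrivial functional. By Definition~\ref{df:slow} and the remark following it, $H$ is precisely the set of slow growing initial $k$-tuples of \eqref{eq:limrec}, and it can be written down explicitly by linear algebra.

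Finally I would run the elementary asymptotics off $H$. Fix an initial $k$-tuple not in $H$, so $\kappa_{max}\neq 0$. Dividing the closed form by $\la_{max}^n$ gives
\[
\frac{v_n}{\la_{max}^n}=\kappa_{max}+\sum_{j=2}^{r}P_j(n)\Big(\frac{\la_j}{\la_{max}}\Big)^{n}\longrightarrow \kappa_{max}\qquad (n\to\infty),
\]
because $|\la_j/\la_{max}|<1$ and each polynomial factor is dominated by the corresponding geometric decay. In particular $v_n\neq 0$ for all large $n$, so the ratio $v_{n+1}/v_n$ is eventually defined, and
\[
\frac{v_{n+1}}{v_n}=\la_{max}\cdot\frac{v_{n+1}/\la_{max}^{\,n+1}}{v_n/\la_{max}^{\,n}}\longrightarrow \la_{max}\cdot\frac{\kappa_{max}}{\kappa_{max}}=\la_{max}.
\]
There is no substantive obstacle: the lemma is essentially a repackaging of Theorem~\ref{th:ordrec}, and the only points needing care are verifying that $v_n$ does not vanish for large $n$ (so the ratio makes sense), that $H$ has codimension exactly one rather than being all of $\bC^k$, and the harmless degenerate cases $\la_{max}=0$ or $k=1$.
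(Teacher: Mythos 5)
Your argument is a correct and essentially self-contained proof of the \emph{constant-coefficient} statement: for solutions of the limiting recurrence~\eqref{eq:limrec} itself, the closed form of Theorem~\ref{th:ordrec}(iii) plus maxmod-genericity immediately gives the ratio limit off the hyperplane $\kappa_{max}=0$, and your treatment of the degenerate points ($\la_{max}\neq 0$, eventual nonvanishing of $v_n$, properness of $H$) is fine. This is in fact how the paper argues the analogous constant-coefficient facts later, in Lemma~\ref{lm:basrec}(i)--(ii).

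However, this is not the statement the paper actually needs or proves here. Despite the $v_n$ notation, Lemma~\ref{lm:varcoef} is invoked (in the remark following Theorem~\ref{th:Po} and in the proof of Theorem~\ref{th:Exist}) as a Poincar\'e--Perron-type result for the \emph{varying-coefficient} recurrence~\eqref{eq:varrec}, whose coefficients $\al_{i,n}$ merely converge to the $\al_i$; the hypothesis of maxmod-genericity is placed on the limiting recurrence, and the conclusion concerns the ratios of a solution of~\eqref{eq:varrec}. Accordingly, the paper's proof is a one-line application of Theorem~\ref{th:linop} to the sequence of companion-type matrices $T_n$ built from the $\al_{i,n}$ (whose invertibility is guaranteed by $\al_{k,n}\neq 0$): the projectivized products $T_nT_{n-1}\cdots T_1$ converge to a rank-one map $\u_{max}\w\t$, and $H$ is the hyperplane $\w\t\x=0$, which in general cannot be computed from the limiting recurrence alone. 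Your approach cannot reach this version, because for varying coefficients there is no closed-form expansion~\eqref{eq:leadasymp} to expand a solution in; the key missing ingredient is precisely the matrix-product convergence theorem (or some substitute for it), and without it the proof does not establish the lemma in the form in which it is subsequently used.
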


\begin{proof}
The result is an immediate corollary of Theorem~\ref{th:linop} 
applied to the family of linear operators $\{T_{n}\mid n\in \bN\}$ whose
action on $\bC^{k+1}$ is given by
$$T_{n}=\begin{pmatrix} 1& \al_{1,n} & \al_{2,n} & \al_{3,n}&
\ldots & \al_{k,n} \\
1 & 0 &0 &0& \ldots & 0\\
0 & 1 & 0 &0& \ldots &0 \\
\vdots & \vdots & \vdots & \vdots& \ddots &\vdots \\
  0 &0 &0& \ldots &1 &0 \end{pmatrix},\quad n\in \bN.$$ 
  Notice that $\al_{k,n}\neq 0$. \qed
\end{proof}

The exceptional complex hyperplane $H$  is called   the {\em hyperplane of slow growth}.
From Theorem~\ref{th:linop} we deduce the following corollary
(cf.~\cite[Proposition 2.1]{Fr1}):

\begin{corollary}\label{cor:linop}
Under the assumptions of Theorem~\ref{th:linop} there exists a sequence of
real numbers $\{\theta_n\}_{n \in \bN}$ such that
$$\lim_{n\to\infty}e^{i\theta_n}\frac{T_{n}T_{n-1}\cdots T_{1}}
{||T_{n}T_{n-1}\cdots T_{1}||}={\bf u}_{max}{\bf w}^{t},$$
the convergence taking place in $\text{{\rm M}}_{k}(\bC)$.
\end{corollary}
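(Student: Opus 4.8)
The plan is to deduce Corollary~\ref{cor:linop} directly from the conclusion of Theorem~\ref{th:linop} by a normalization argument in the compact projective space $\P M_k(\bC)$. Theorem~\ref{th:linop} already tells us that the complex line spanned by $P_n := T_n T_{n-1}\cdots T_1$ in $M_k(\bC)$ converges, in $\P M_k(\bC)$, to the line spanned by the rank-one matrix $E := {\bf u}_{max}{\bf w}^t$. The content of the corollary is merely to re-express this projective convergence as convergence of a suitably rescaled sequence of \emph{representatives} in the affine space $M_k(\bC)$: one chooses the representative of unit norm, and the only freedom left is a unimodular scalar $e^{i\theta_n}$.

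First I would note that $E \neq 0$ (since ${\bf u}_{max}\neq 0$ and, by the final sentence of Theorem~\ref{th:linop}, ${\bf w}\neq 0$), so $\|E\|>0$; rescale ${\bf w}$ once and for all so that $\|E\|=\|{\bf u}_{max}{\bf w}^t\|=1$. Since each $T_j$ is invertible, $P_n$ is invertible, hence nonzero, so $Q_n := P_n/\|P_n\|$ is a well-defined sequence on the unit sphere of $M_k(\bC)$, which is compact. The projective convergence $\widehat{P_n}\to\widehat{E}$ means exactly that $d_{\P}(\widehat{Q_n},\widehat{E})\to 0$ in the quotient metric, i.e. $\min_{|\zeta|=1}\|Q_n - \zeta E\|\to 0$. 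For each $n$ pick $\theta_n\in\bR$ attaining this minimum (the minimum is attained since $\{|\zeta|=1\}$ is compact and the map is continuous), and set $\zeta_n := e^{i\theta_n}$. Then $\|Q_n - \zeta_n E\|\to 0$, i.e. $\|e^{-i\theta_n} Q_n - E\|\to 0$. Replacing $\theta_n$ by $-\theta_n$ (a harmless relabeling), this reads
$$\lim_{n\to\infty} e^{i\theta_n}\frac{T_n T_{n-1}\cdots T_1}{\|T_n T_{n-1}\cdots T_1\|} = {\bf u}_{max}{\bf w}^t,$$
the convergence taking place in $M_k(\bC)$, which is the assertion.

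The one point that needs a line of care is the equivalence between "the line $\widehat{P_n}$ converges to the line $\widehat{E}$ in $\P M_k(\bC)$" and "$\operatorname{dist}$ from the unit-norm representative $Q_n$ to the unit circle's worth of unit-norm representatives of $E$ tends to $0$." This is a general and elementary fact about the quotient map $S\to\P M_k(\bC)$ from the unit sphere $S$ of a finite-dimensional normed space: it is a closed continuous surjection with compact fibers $\{\zeta E : |\zeta|=1\}$, so a sequence of lines converges iff the distances from one (equivalently every) choice of unit representative to the limiting fiber go to zero. I would simply invoke this; no computation is required. I do not anticipate a genuine obstacle here — the corollary is a routine repackaging of Theorem~\ref{th:linop}, and the only thing to get right is the bookkeeping of the phase $e^{i\theta_n}$ and the observation that a minimizing phase exists by compactness of the unit circle.
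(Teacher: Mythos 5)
Your argument is correct, and it is exactly the routine unwinding of projective convergence that the paper has in mind: the paper offers no written proof of Corollary~\ref{cor:linop}, stating it as an immediate consequence of Theorem~\ref{th:linop} with a pointer to \cite[Proposition 2.1]{Fr1}. Your normalization of $\w$ so that $\|{\bf u}_{max}{\bf w}^t\|=1$ and the extraction of a minimizing phase $e^{i\theta_n}$ by compactness of the unit circle supply precisely the bookkeeping the paper leaves implicit.
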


We now settle Theorem~\ref{th:Exist}. 

\medskip 
\begin{proof}[Theorem~\ref{th:Exist} (i)]
 Let us first 
determine $\Sigma_{I}$ and at the same time prove the  
pointwise convergence of $\frac 
{f_{n+1}(x_{1},\ldots,x_{d})}{f_{n}(x_{1},\ldots,x_{d})}$ to 
$\Psi_{max}(x_{1},\ldots,x_{d})$ in the complement $\Omega\setminus 
(\Xi_{\widetilde \phi}\cup \Sigma_{I})$. 
One can view  the set-up of
Theorem~\ref{th:Exist} as the situation when the recurrence
relation~\eqref{eq:varrec} depends on additional parameters
$\x=(x_{1},\ldots,x_{d})$. Thus if at a given
point $p\in \Omega\subseteq \bR^d$ the limit $\lim_{n\to\infty}\frac
{f_{n+1}(p)}{f_{n}(p)}$ does not exist then  $p$ lies in the induced maxmod-discriminant
$\Xi_{\widetilde \phi}$ (cf.~Definition~\ref{def7}). On the other hand, if  $\lim_{n\to\infty}\frac
{f_{n+1}(p)}{f_{n}(p)}$ exists but is not equal to $\Psi_{max}(p)$ and, additionally,    
 $p$ lies in $\Omega\setminus \Xi_{\widetilde \phi}$ then the corresponding initial $k$-tuple
$I(p)=\big(f_{0}(p),\ldots,f_{k-1}(p)\big)$
belongs to  the hyperplane $H(p)$ of slow growth   at the given
point $p$, see Lemma~\ref{lm:varcoef}.
 The latter set of points $p$ 
is by definition the  set  $\Si_{I}$ of slowly growing initial conditions.  
For varying coefficients the hyperplane of slow growth at a
given point $p\in \Omega\setminus \Xi_{\widetilde\phi}$ is determined by
$\lim_{n\to \infty}e^{i\theta_n}\frac{T_{n}T_{n-1}\cdots T_{1}}
{\vert \vert T_{n}T_{n-1}\cdots T_{1}\vert\vert}$, see
Corollary~\ref{cor:linop}. Indeed, Theorem~\ref{th:linop} implies that
in this case the hyperplane of slow growth $H(p)$  at $p$
consists of all vectors ${\bf x}(p)\in \bC^{k}$ such that
${\bf w}^{t}(p){\bf x}(p)=0$. 
\end{proof}

To settle  Theorem~\ref{th:Exist} (ii) we  assume 
that $\overline \phi_n\rightrightarrows \widetilde \phi=
\big(\tilde \phi_1,\tilde \phi_2,\ldots,\tilde \phi_k\big)$ and all considered 
functions are continuous in $\Omega$. 
Let $p_0\in \Omega\setminus (\Xi_{\widetilde \phi}\cup \Sigma_{I})$. From 
Corollary~\ref{cor:linop} and the arguments of 
\cite[\S 4 and \S 5]{Fr1} it follows that one can find a sufficiently small 
neighborhood $\calO_{p_0}\subset \Omega\setminus (\Xi_{\widetilde \phi}\cup 
\Sigma_{I})$ of $p_0$ 
such that for any $\eps>0$ there exists $N_{\eps}\in \bN$ satisfying
\begin{equation}\label{fake}
\left\vert\left\vert e^{i\theta_n(p)}\frac{K_n(p)}
{\vert \vert K_n(p)\vert\vert}-
{\bf u}_{max}(p){\bf w}^{t}(p)\right\vert\right\vert\le \eps\,\text{ for }\,
p\in \calO_{p_0},\,n\ge N_{\eps},
\end{equation}
where $K_n(p):=T_{n}(p)T_{n-1}(p)\cdots T_{1}(p)$. Clearly, 
${\bf u}_{max}(p){\bf w}^{t}(p)$ is a rank one matrix and so it has 
precisely one simple eigenvalue of maximum modulus for all $p\in \calO_{p_0}$. 
From~\eqref{fake} we deduce that there exists $N\in \bN$ such that the 
polynomial $\text{det}(\la I_k-K_n(p))$ has precisely one simple eigenvalue 
$\la_{max,n}(p)$ of maximum modulus for all $p\in \calO_{p_0}$ and $n\ge N$. The 
implicit function theorem implies that $\la_{max,n}(p)$ is continuous  in $\calO_{p_0}$
for all $n\ge N$, so that $\left\{\dfrac{K_n(p)}{\la_{max,n}(p)}\right\}_{n\ge N}$
is a sequence of matrix-valued functions which are continuous  in $\calO_{p_0}$. 
On the other hand, it is not difficult to see that this sequence converges 
pointwise in $\calO_{p_0}$ to ${\bf u}_{max}(p){\bf w}^{t}(p)$ (see e.g.~the proof 
of Theorem~\ref{th:linop} given in~\cite{Fr1}). Together with~\eqref{fake} this
implies that 
$\left\{\dfrac{\vert\vert K_n(p)\vert\vert}{|\la_{max,n}(p)|}\right\}_{n\ge N}$ 
is a sequence of (continuous) functions that converges pointwise to the 
constant function 1 on $\calO_{p_0}$. It follows that there exists some open
subset $\calO_{p_0}'\subset \calO_{p_0}$ with $p_0\in \calO_{p_0}'$ such that
$\left\{\dfrac{\vert\vert K_n(p)\vert\vert}{|\la_{max,n}(p)|}\right\}_{n\ge N}$ 
is a uniformly bounded sequence of functions on $\calO_{p_0}'$. Thus 
$\left\{\dfrac{K_n(p)}{\la_{max,n}(p)}\right\}_{n\ge N}$ is a bounded 
sequence of continuous matrix-valued functions which converges pointwise to 
the function ${\bf u}_{max}(p){\bf w}^{t}(p)$ in $\calO_{p_0}'$. Invoking
Vitali's theorem we conclude that
\begin{equation}\label{f2}
\frac{K_n(p)}{\la_{max,n}(p)}\rightrightarrows {\bf u}_{max}(p){\bf w}^{t}(p)
\,\text{ in }\,\calO_{p_0}'.
\end{equation} \qed


\begin{notation}\label{not2} 
In notation of Theorem~\ref{th:Exist} 
let $V=\Omega\times \bC^k$ denote the Cartesian product of $\Omega$ and the
 linear space $\bC^k$ of all initial $k$-tuples at a point in $\Omega$. Any initial $k$-tuple
$I=\big(f_{0}(\x),\ldots,    f_{k-1}(\x)\big)$ of smooth (respectively, analytic) 
functions in    $\Omega$ can be considered as a smooth (respectively, analytic) section of 
$V$ viewed as  a trivial vector bundle over $\Omega$.  Denote by $\widetilde V$ the
   restriction of the bundle $V$ to the subset $\Omega\setminus \Xi_{\widetilde \phi}$
   of the base.
\end{notation}

\begin{definition}
Let $\Delta_{\phi}$ denote the subset of $\widetilde V$ consisting of
   all pairs $(p,I(p))$ such that the asymptotic ratio of the
recurrence~\eqref{eq:General} evaluated at $p$ with initial $k$-tuple
$I(p)\in \bC^k$
does not coincide with the leading root of~\eqref{eq:Symb} at $p$. The
set $\Delta_{\phi}$ is called the {\em subvariety of slow growth}.
\end{definition}

\begin{corollary}\label{pr:1}
In the above notation the subvariety $\Delta_{\phi}$ of slow growth is a smooth (resp. an
 analytic)  subbundle of complex hyperplanes
in $\widetilde V$ provided that the family of $k$-tuples
$$\left\{\overline \phi_n=\big(\phi_{1,n}(\x),\phi_{2,n}(\x), \ldots,\phi_{k,n}(\x)\big)\mid
n\in \bZ_+\right\}$$
with $\phi_{k,n}(\x)$ non-vanishing in $\Omega$ 
and its limit $k$-tuple $\widetilde \phi=\Big(\widetilde \phi_{1}(\x),
\widetilde \phi_{2}(\x),\ldots,
\widetilde \phi_{k}(\x)\Big)$ consist of smooth (resp. 
analytic)  functions that satisfy 
$\overline \phi_{n}\rightrightarrows 
\widetilde \phi$ 
in $\Omega$ in the corresponding category. 
\end{corollary}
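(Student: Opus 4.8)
The plan is to deduce Corollary~\ref{pr:1} directly from the already-established Theorem~\ref{th:Exist}, together with the parametric continuity statement of Theorem~\ref{anconvprod}, by observing that ``being a smooth (resp.\ analytic) subbundle of hyperplanes'' is a \emph{local} property of the base $\Omega\setminus\Xi_{\widetilde\phi}$, so it suffices to produce, around each point $p_0\in\Omega\setminus\Xi_{\widetilde\phi}$, a smooth (resp.\ analytic) local frame for $\Delta_\phi$. First I would recall the identification made in the proof of Theorem~\ref{th:Exist}: the recurrence~\eqref{eq:General} is encoded by the companion-type matrices
\[
T_n(\x)=\begin{pmatrix}
-\phi_{1,n}(\x) & -\phi_{2,n}(\x) & \cdots & -\phi_{k,n}(\x)\\
1 & 0 & \cdots & 0\\
\vdots & \ddots & & \vdots\\
0 & \cdots & 1 & 0
\end{pmatrix},
\]
which are invertible precisely because $\phi_{k,n}(\x)\neq0$, depend smoothly (resp.\ analytically) on $\x$ by hypothesis, converge uniformly on compacts to the companion matrix $T(\x)$ of~\eqref{eq:Symb}, and for which $T(\x)$ has a unique simple eigenvalue of maximal modulus, namely $\Psi_{max}(\x)$, on $\Omega\setminus\Xi_{\widetilde\phi}$. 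Then $\Delta_\phi$ is exactly the set of pairs $(p,I(p))$ with $I(p)$ in the slow-growth hyperplane $H(p)=\{\v\in\bC^k\mid \w\t(p)\v=0\}$ furnished by Theorem~\ref{anconvprod} (equivalently by Lemma~\ref{lm:varcoef} and Corollary~\ref{cor:linop}).

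Next I would invoke the conclusion of Theorem~\ref{anconvprod}: the covector $\w(\x)$, normalized as an element of $\P^{k-1}$, is continuous on $\Omega\setminus\Xi_{\widetilde\phi}$. To upgrade continuity to smoothness (resp.\ analyticity) in the appropriate category, I would use the Remark following Theorem~\ref{anconvprod}, which asserts that the same proof goes through in the smooth category when $\{T_n(\x)\}$ converges uniformly together with all partial derivatives, and in the analytic category under uniform convergence of analytic matrices (whose limit is then automatically analytic, as in the Remark after Theorem~\ref{th:Exist}); in the analytic case one may also argue directly that the limit of $\widehat{\v\t(\x_0)}\,\widehat{T_n(\x)}\cdots\widehat{T_1(\x)}$ is a locally uniform limit of analytic $\P^{k-1}$-valued maps, hence analytic by Vitali/Weierstrass. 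In either category one obtains, on a neighborhood $\calO_{p_0}$ of any $p_0$, a nowhere-vanishing smooth (resp.\ analytic) representative $\w(\x)\in\bC^k\setminus\{\0\}$ of $\hat\w(\x)$.

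Having such a local frame covector $\w(\x)$, the subvariety $\Delta_\phi$ over $\calO_{p_0}$ is the kernel of the bundle map $\widetilde V|_{\calO_{p_0}}\to \calO_{p_0}\times\bC$, $(p,\v)\mapsto(p,\w\t(p)\v)$, which is a smooth (resp.\ analytic) surjective bundle morphism of constant rank one; its kernel is therefore a smooth (resp.\ analytic) subbundle of rank $k-1$, i.e.\ a subbundle of complex hyperplanes. Patching these local descriptions over a cover of $\Omega\setminus\Xi_{\widetilde\phi}$ shows $\Delta_\phi$ is globally such a subbundle, and the identification of its fibers with the hyperplanes of slow growth is precisely Theorem~\ref{th:Exist}. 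The main obstacle is the regularity upgrade in the middle step: the proof of Theorem~\ref{anconvprod} only literally delivers \emph{continuity} of $\hat\w$, so one must check that the uniform-on-compacts estimates in Lemmas~\ref{z0epsdeltpert}--\ref{rank1ma}, which control $\widehat{B_1(\x)}\cdots\widehat{B_l(\x)}\widehat{\v(\x_0)}$ by a geometric-in-$l$ bound, also hold for the $\x$-derivatives (differentiating the finite products $B_j(\x)$ and using uniform convergence of derivatives) so that the limit inherits smoothness; in the analytic category this difficulty evaporates, since a locally uniform limit of holomorphic maps into $\P^{k-1}$ is holomorphic, and it is only in the $C^\infty$ category that one must carry the extra hypothesis on convergence of all partial derivatives, exactly as flagged in the Remarks.
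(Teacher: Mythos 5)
Your proposal is correct and follows essentially the same route as the paper: the paper's proof is a one-line reduction to Theorem~\ref{th:Exist}(ii), whose own proof contains exactly the Vitali/Weierstrass argument you spell out (uniform convergence of $K_n(p)/\la_{max,n}(p)$ to the rank-one matrix ${\bf u}_{max}(p){\bf w}^{t}(p)$, whence smooth resp.\ analytic dependence of the slow-growth hyperplane $\w\t(p)\v=0$ on $p$). Your additional care about upgrading the mere continuity of $\hat\w$ from Theorem~\ref{anconvprod} to the required regularity, and the explicit local-frame/kernel-of-a-rank-one-bundle-morphism packaging, are faithful expansions of what the paper leaves implicit rather than a different argument.
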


\begin{proof} Indeed, by Theorem ~\ref{th:Exist}  (ii) the  family of hyperplanes of slow growth depend  smoothly resp. analytically on $p\in \Omega\setminus \Xi_{\widetilde \phi}$ in the corresponding category. \qed  
\end{proof} 

\begin{definition}
Let $\xi: E\to B$ be a (complex) vector bundle over a base $B$. A smooth (respectively, analytic) section $S:B\to E$ is called 
{\em transversal} to a given 
smooth (respectively, analytic) submanifold $\mathcal H \subset E$ if  at 
each point $p$ of the
intersection  $S\cap \mathcal H$ the sum of the tangent spaces at $p$ to
$S$ and $\mathcal H$ coincides
with the tangent space  at $p$ to the ambient space $E$. A subset $X$ of a  topological space $Y$  is called {\em massive} if $X$ can be represented as  the
intersection of at most countably many open dense subsets in $Y$. 
\end{definition}

\begin{remark}\label{rem4}
Thom's transversality theorem, see e.g.~\cite{GG}, implies that the set of
all smooth sections of the bundle $ \xi: E\to B$ which are transversal to a given smooth
subbundle
$\mathcal H\subset E$ is a massive subset of the set of all sections. The same 
holds in the analytic category provided that the fibration is trivial.
\end{remark}

\begin{lemma}\label{cor:1}
If an initial $k$-tuple $I=\big(f_{0}(\x),\ldots,    f_{k-1}(\x)\big)$ of smooth (respectively, analytic) functions 
in $\Omega$  is transversal to the subvariety $\Delta_{\phi}$ of slow growth  then the
set $\Si_{I}$ of slowly growing initial conditions is
  either empty or a smooth (respectively, analytic) subvariety in $\Omega$ of 
real codimension two.
\end{lemma}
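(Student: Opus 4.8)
The statement asserts that if the initial $k$-tuple $I$, viewed as a section of the trivial bundle $\widetilde V = (\Omega\setminus\Xi_{\widetilde\phi})\times\bC^k$, is transversal to the hyperplane subbundle $\Delta_\phi$ of slow growth, then $\Sigma_I = I \cap \Delta_\phi$ is empty or a real-codimension-two submanifold of $\Omega$. The plan is to observe that $\Sigma_I$ is, by definition (see the proof of Theorem~\ref{th:Exist}), exactly the set of points $p\in\Omega\setminus\Xi_{\widetilde\phi}$ for which the value $I(p) = (f_0(p),\ldots,f_{k-1}(p))\in\bC^k$ lies in the slow-growth hyperplane $H(p)$, i.e. the set where the section $I$ meets the subbundle $\Delta_\phi$. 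By Corollary~\ref{pr:1}, $\Delta_\phi$ is a smooth (resp.\ analytic) subbundle of complex hyperplanes in $\widetilde V$, hence a smooth (resp.\ analytic) submanifold of $\widetilde V$ of real codimension $2$ (one complex codimension). So the question is genuinely a transversality-dimension count.

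First I would set up the local picture: near a point $p_0\in\Sigma_I$, trivialize $\Delta_\phi$ so that it is cut out in $\widetilde V$ by a single complex equation $\w(\x)\t\v = 0$, where $\x\mapsto\w(\x)$ is the smooth (resp.\ analytic) map supplied by Theorem~\ref{th:Exist}(ii) / Corollary~\ref{pr:1}. Then $\Sigma_I$ is locally the zero set of the single complex-valued function $g(\x) := \w(\x)\t I(\x)$ on $\Omega\setminus\Xi_{\widetilde\phi}$, equivalently the common zero set of the two real functions $\operatorname{Re} g$ and $\operatorname{Im} g$. Next I would translate the transversality hypothesis into a statement about $g$: transversality of the section $I$ to $\Delta_\phi$ at $p_0$ means $T_{p_0}(\operatorname{graph} I) + T_{p_0}\Delta_\phi = T_{p_0}\widetilde V$; since $\Delta_\phi$ has real codimension $2$ and $\operatorname{graph} I$ is a section (so projects isomorphically to the base), this is equivalent to saying that the composite $dg_{p_0}: T_{p_0}\Omega \to \bC$ — differentiating $g$ along the base, which records how $\operatorname{graph} I$ moves transversally to $\Delta_\phi$ — is surjective onto $\bC \cong \bR^2$. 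Surjectivity of $dg_{p_0}$ onto $\bR^2$ is exactly the condition, via the implicit function theorem (smooth or analytic version), for $\{g = 0\}$ to be a submanifold of $\Omega$ of real codimension $2$ near $p_0$. Applying this at every point of $\Sigma_I$ gives the result; if there are no such points, $\Sigma_I = \emptyset$.

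The main obstacle I anticipate is the bookkeeping that identifies ``transversality of the section to the subbundle'' with ``surjectivity of the base-derivative of the defining function $g$.'' One must be careful that $\Delta_\phi$ is not a product submanifold — the hyperplane $H(p)$ varies with $p$ — so the tangent space $T\Delta_\phi$ at a point $(p,I(p))$ contains both ``vertical'' directions (inside the fiber hyperplane $H(p)$) and ``horizontal'' directions that track the variation of $H(p)$ with $p$; the correct claim is that the section is transversal iff, modulo these, the residual motion of $I(\x)$ relative to $H(\x)$ spans the missing two real dimensions, and this residual motion is precisely $dg_{p_0}$. Writing this cleanly — perhaps by choosing the local trivialization of $\widetilde V$ adapted to $\Delta_\phi$ so that $\Delta_\phi$ becomes the product $(\text{base})\times\{\,\v : \w_0\t\v = 0\,\}$ after a fiberwise linear change of coordinates depending smoothly/analytically on $\x$ — reduces the transversality condition to the clean statement about $dg$, and the rest is the implicit function theorem. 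The only other point to mention is that, since $\widetilde V$ is trivial, the analytic case of Thom transversality applies (as noted in Remark~\ref{rem4}), so the hypothesis is generic in both categories; but that genericity is not needed for the present lemma, only the local implicit-function-theorem argument above. \qed
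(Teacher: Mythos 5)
Your proof is correct and follows essentially the same route as the paper: both arguments rest on Corollary~\ref{pr:1} (that $\Delta_\phi$ is a smooth/analytic complex-hyperplane subbundle of $\widetilde V$, hence of real codimension two) and then convert transversality of the section $I$ into the statement that $\Sigma_I$, the projection to the base of $I\cap\Delta_\phi$, is empty or a codimension-two submanifold. The only difference is presentational: the paper quotes the abstract transversal-intersection theorem and notes that $\xi$ restricts to a diffeomorphism from $I\cap\Delta_\phi$ onto $\Sigma_I$, whereas you unpack the same dimension count in local coordinates via the defining function $g(\x)=\w(\x)\t I(\x)$ and the implicit function theorem.
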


\begin{proof}
By Thom's transversality theorem the set of all smooth 
initial $k$-tuples which are transversal to $\Delta_{\Phi}$  is a
massive subset of the set of all possible smooth initial
$k$-tuples (cf.~Remark~\ref{rem4}). Combined with Proposition~\ref{pr:1}
this implies, in particular, that for a given initial $k$-tuple $I$
transversal to $\Delta_{\phi}$ one has that $I$ (considered as a
section of $\widetilde V$) and $\Delta_{\Phi}$ are smooth submanifolds
of $\widetilde V$ transversal to each other, where $\widetilde V$ is the
restriction of the bundle $V=\Omega\times \bC^k$ to the subset
$\Omega\setminus \Xi_{\widetilde \phi}$ of the base (cf.~Notation~\ref{not2}). This
transversality property implies  that the intersection
$I\cap \Delta_{\phi}$ is either empty or a smooth submanifold of
$\widetilde V$ of real codimension $2$. Notice that the image
$\xi(I\cap \Delta_{\phi})$ of the
projection $\xi:\widetilde V\to \Omega\setminus \Xi_{\widetilde \phi}$
to the base is exactly $\Sigma_{I}$. Actually, $\xi$
induces a diffeomorphism between $I\cap \Delta_{\phi}$ and
$\Sigma_{I}$. To see this one simply notes that $\xi$ maps the
section $I$ diffeomorphically to
$\Omega\setminus \Xi_{\widetilde \phi}$ and that the intersection
$I\cap \Delta_{\phi}$ is a smooth submanifold of the section $I$. The proof 
of the corresponding result in the analytic category is analogous. \qed
\end{proof}

In particular, in the one-dimensional case one has the following. 

\begin{lemma}\label{cor:2}
 If the coefficients
$\big(\phi_{1,n}(z),\ldots, \phi_{k,n}(z)\big)$, $n\in \bZ_+$, and the initial
$k$-tuple $I=\big(f_{0}(z),\ldots,f_{k-1}(z)\big)$ of
recurrence~\eqref{eq:General} are complex analytic functions in an
open set $\Omega\subseteq \bC$, and $\phi_{k,n}(z)\neq 0$ in $\Omega$  then either $\,\Xi_{\widetilde \phi}=\Omega$ or 
$\,\Xi_{\widetilde \phi}$ is a union of real analytic curves while $\Si_{I}$ is
either the whole $\Omega$ or consists of isolated points.
\end{lemma}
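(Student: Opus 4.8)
\textbf{Proof plan for Lemma~\ref{cor:2}.}
The plan is to derive this as the one-dimensional specialization of Lemma~\ref{cor:1}, with the extra input that in dimension one real analytic curves and real-codimension-two subsets are very rigid. First I would observe that the whole set-up of Theorem~\ref{th:Exist} applies: since $\Omega\subseteq\bC$ is identified with a domain in $\bR^2$, the induced maxmod-discriminant $\Xi_{\widetilde\phi}$ is the zero set in $\Omega$ of the real-analytic condition defining maxmod-nongenericity of the symbol equation~\eqref{eq:Symb}. A real-analytic subset of a connected open set in $\bR^2$ is either the whole set or a locally finite union of points and real-analytic arcs; absorbing the isolated points into the arc description (or noting they form a subcase), one gets the stated dichotomy: either $\Xi_{\widetilde\phi}=\Omega$ or $\Xi_{\widetilde\phi}$ is a union of real-analytic curves.

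Next I would handle $\Si_I$. If $\Xi_{\widetilde\phi}=\Omega$ there is nothing to prove for the first alternative, so assume $\Xi_{\widetilde\phi}\neq\Omega$, hence $\Omega\setminus\Xi_{\widetilde\phi}$ is a nonempty open set. Here I want to invoke Lemma~\ref{cor:1}: if the analytic initial $k$-tuple $I$ is transversal to the subvariety $\Delta_\phi$ of slow growth, then $\Si_I$ is empty or an analytic subvariety of $\Omega$ of real codimension two, i.e.\ (in $\bR^2$) a discrete set of points. The point that needs care is that Lemma~\ref{cor:2} is stated for an \emph{arbitrary} analytic initial $k$-tuple, not only a transversal one. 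So I would argue directly, without transversality: at each $p\in\Omega\setminus\Xi_{\widetilde\phi}$ the hyperplane $H(p)$ of slow growth is $\{\x : \w(p)\t\x=0\}$ with $\w(p)$ depending analytically on $p$ by Corollary~\ref{pr:1}/Theorem~\ref{th:Exist}(ii), so $\Si_I$ is exactly the zero set $\{p\in\Omega\setminus\Xi_{\widetilde\phi} : \w(p)\t I(p)=0\}$ of a single complex-analytic (hence real-analytic) scalar function $h(p):=\w(p)\t I(p)$ on $\Omega\setminus\Xi_{\widetilde\phi}$. A nonvanishing complex-analytic function of one complex variable has isolated zeros, while if $h\equiv 0$ on some component then $\Si_I$ contains that whole component. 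Chasing connectedness and the possibility that different components behave differently, one concludes that $\Si_I$ is either all of $\Omega$ (more precisely all of $\Omega\setminus\Xi_{\widetilde\phi}$, and by density considerations this forces the degenerate situation) or consists of isolated points.

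The main obstacle I anticipate is the transition from ``the local slow-growth functional $\w(p)\t I(p)$ vanishes'' to a clean global statement, because $\w(p)$ is a priori only defined up to scalar (it lives in $\P^{k-1}$) and is only guaranteed continuous/analytic locally near each point of $\Omega\setminus\Xi_{\widetilde\phi}$, not globally as a single vector-valued function. To circumvent this I would note that the \emph{zero locus} $\{\w(p)\t I(p)=0\}$ is well defined independently of the scaling and patches together across overlaps, so it is a genuine global real-analytic subset of $\Omega\setminus\Xi_{\widetilde\phi}$; then the real-analytic stratification theorem in $\bR^2$ gives that it is either open (hence, being also closed in $\Omega\setminus\Xi_{\widetilde\phi}$ and by the analytic structure, a union of components) or of real codimension at least one, and the complex-analytic (not merely real-analytic) nature of $h$ upgrades ``codimension at least one'' to ``isolated points''. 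A secondary, purely bookkeeping obstacle is reconciling the two ``either/or'' alternatives in the statement with the four a priori cases for the pair $(\Xi_{\widetilde\phi},\Si_I)$; I expect this to be routine once the analytic dichotomy for each is in hand.
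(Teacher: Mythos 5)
Your proposal is correct and follows essentially the same route as the paper: the paper likewise deduces the dichotomy for $\Xi_{\widetilde\phi}$ from its being a real (semi)analytic subset of a planar domain, and the dichotomy for $\Si_I$ from the analyticity of the slow-growth subvariety $\Delta_{\phi}$ (Corollary~\ref{pr:1}), i.e.\ from $\Si_I$ being the zero locus of the complex-analytic condition $\w(p)\t I(p)=0$. Your extra care about $\hat\w(p)$ being defined only locally and up to scale, and about components of $\Omega\setminus\Xi_{\widetilde\phi}$ behaving differently, fills in details the paper leaves implicit rather than departing from its argument.
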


\begin{proof} 
By Corollary~\ref{pr:1} the set $\Delta_{\Phi}$ is analytic 
in the analytic category. Thus in this case the asymptotic symbol
equation (\ref{eq:Symb}) is  either maxmod-nongeneric everywhere in
$\Omega\subseteq \bC$ or is  maxmod-nongeneric on a one-dimensional real analytic subset, which
proves the statement in part (i) concerning $\Xi_{\widetilde \phi}$. Note that, in fact,  in
the analytic  category $\Xi_{\widetilde \phi}$ is always a real semianalytic set and  $\Sigma_{I}$ is analytic. 
Therefore,  $\Sigma_{I}$
is either a complex analytic curve, in which case it
coincides with $\Omega$, or an analytic 
zero-dimensional subset of $\Omega$, i.e., the union of isolated points.\qed
\end{proof}

We finally turn to Proposition~\ref{pr:2}. To settle it we need an additional lemma. 
which  a simple consequence of Theorem~\ref{th:ordrec}.

\begin{notation}
  Let $Rec_{k}$ be the $k$-dimensional complex linear space    consisting of all $(k+1)$-term recurrence relations with constant
coefficients of the form~\eqref{eq:Basic}. As before we denote by  $\bC^k$ the $k$-dimensional
complex linear space  of all initial $k$-tuples $(u_{0},...,u_{k-1})$.
\end{notation}

\begin{definition}\label{domin}
A maxmod-nongeneric recurrence relation in $Rec_{k}$ with initial $k$-tuple
$in_k\in \bC^k$ is said to be of {\em dominant type} if the following conditions
are satisfied. Let $\la_1,\ldots,\la_r$, $r\le k$, denote all distinct
spectral numbers with maximal absolute value and assume that these have
multiplicities $m_1,\ldots,m_r$, respectively. Then there exists a unique
index $i_0\in\{1,\ldots,r\}$ such that $m_i<m_{i_0}$ for
$i\in \{1,\ldots,r\}\setminus \{i_{0}\}$ and the initial $k$-tuple $in_k$ is
{\em fast growing} in the sense that the degree of the polynomial $P_{i_0}$
in~\eqref{eq:leadasymp} corresponding to $\la_{i_0}$ is precisely
$m_{i_0}-1$. The number $\la_{i_0}$ is called the {\em dominant
spectral number} of this recurrence relation.
\end{definition}

\begin{lemma}\label{lm:basrec}
In the above notation the following is true:
\begin{enumerate}
\item[(i)] The set of all slowly growing initial $k$-tuples
with respect to a given maxmod-generic recurrence relation in $Rec_{k}$ is a complex
hyperplane $\SG_k$ in $\bC^k$. The set $\SG_k$ is called the {\em hyperplane of slow growth}.
\item[(ii)] For any maxmod-generic recurrence relation in $Rec_{k}$ and any
fast growing initial $k$-tuple $(u_{0},\ldots,u_{k-1})$ the limit
$\lim_{n\to\infty}\frac{u_{n+1}}{u_{n}}$ exists and     coincides with the leading spectral number $\la_{max}$, that is,
     the (unique) root of the characteristic equation~\eqref{eq:Char} with     maximal absolute value.
\item[(iii)] Given a maxmod-nongeneric recurrence relation of
dominant type in $Rec_{k}$ the limit $\lim_{n\to\infty}\frac{u_{n+1}}{u_{n}}$ exists and
coincides with the dominant spectral number.
\item[(iv)] For any maxmod-nongeneric recurrence relation of
nondominant type in $Rec_{k}$ the set of initial $k$-tuples for which $\lim_{n\to\infty}\frac{u_{n+1}}{u_{n}}$
exists is a union of complex subspaces of $\bC^k$ of positive codimensions.
This union is called the {\em exceptional variety}.
\end{enumerate}
\end{lemma}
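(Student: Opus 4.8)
The plan is to reduce everything to Theorem~\ref{th:ordrec}, in particular to the closed-form asymptotic expansion~\eqref{eq:leadasymp}, and then simply read off the behaviour of $\frac{u_{n+1}}{u_n}$ in each of the four cases. Throughout, write $u_n=\sum_{i=1}^s P_i(n)\la_i^n$ with the $\la_i$ the distinct spectral numbers, ordered so that $|\la_1|\ge|\la_2|\ge\cdots$, and let $d_i=\deg P_i$ (with the convention that a vanishing $P_i$ contributes nothing); recall that the map sending an initial $k$-tuple $(u_0,\dots,u_{k-1})$ to the coefficient vector of the polynomials $P_i$ is a linear isomorphism of $\bC^k$, since the $k$ functions $n\mapsto n^j\la_i^n$ ($1\le i\le s$, $0\le j\le m_i-1$) are linearly independent solutions of~\eqref{eq:Basic}. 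This isomorphism is the bookkeeping device that turns each of the statements (i)--(iv) into a statement about vanishing or non-vanishing of a specific leading coefficient, hence into a linear-algebraic condition on $\bC^k$.

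First I would treat (i) and (ii) together. In the maxmod-generic case there is a unique spectral number $\la_{max}=\la_1$ of maximal modulus and it is simple, so $P_1$ is a constant $\kappa_{max}$; the $k$-tuple is slowly growing precisely when $\kappa_{max}=0$, and since $\kappa_{max}$ is one fixed coordinate in the isomorphic copy $\bC^k$, the slow-growth locus $\SG_k$ is the kernel of a nonzero linear functional, i.e.\ a hyperplane. This proves (i). For (ii), when the $k$-tuple is fast growing we have $u_n=\kappa_{max}\la_{max}^n\bigl(1+o(1)\bigr)$ because every other term is $O(|\la_2|^n\,n^{k})=o(|\la_{max}|^n)$ as $|\la_2|<|\la_{max}|$; dividing, $\frac{u_{n+1}}{u_n}=\la_{max}\cdot\frac{1+o(1)}{1+o(1)}\to\la_{max}$. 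This also reproves the ``alternative characterization'' remarked on after Definition~\ref{df:slow}.

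Next, (iii). For a recurrence of dominant type, among the maximal-modulus spectral numbers $\la_1,\dots,\la_r$ there is a unique one, say $\la_{i_0}$, of strictly largest multiplicity $m_{i_0}$, and the $k$-tuple is assumed fast growing, i.e.\ $\deg P_{i_0}=m_{i_0}-1$ exactly. Then the dominant term of $u_n$ is $c\,n^{m_{i_0}-1}\la_{i_0}^n$ with $c\neq 0$: every competing term $P_i(n)\la_i^n$ with $|\la_i|=|\la_{i_0}|$ has $\deg P_i\le m_i-1<m_{i_0}-1$, hence is $o(n^{m_{i_0}-1}|\la_{i_0}|^n)$, and every term with $|\la_i|<|\la_{i_0}|$ is exponentially smaller. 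Writing $u_n=c\,n^{m_{i_0}-1}\la_{i_0}^n(1+o(1))$ and using $\frac{(n+1)^{m_{i_0}-1}}{n^{m_{i_0}-1}}\to1$, we get $\frac{u_{n+1}}{u_n}\to\la_{i_0}$, the dominant spectral number.

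Finally, (iv), the nondominant case, which is where the only real care is needed. Here the set of maximal-modulus spectral numbers attaining the maximal multiplicity among them has at least two elements; call them $\la_{j}$, $j\in J$, $|J|\ge2$, each with $\deg$-bound $m-1$ where $m=\max_{|\la_i|=\rho}m_i$. I claim $\lim\frac{u_{n+1}}{u_n}$ exists iff the ``top part'' $\sum_{j\in J}c_j\,n^{m-1}\la_j^n$ degenerates, where $c_j$ is the leading coefficient of $P_j$. Indeed, if all $c_j=0$ one drops to a lower power of $n$ and/or fewer competitors and argues by induction on the pair (dominant modulus, number of competitors at top power) — each vanishing condition $c_j=0$ is linear in $\bC^k$, so the locus where the limit exists is a finite union of intersections of coordinate hyperplanes, i.e.\ a union of proper subspaces, giving the ``exceptional variety.'' If not all $c_j$ vanish, then $u_n/(n^{m-1}\rho^n)$ is asymptotic to the almost-periodic function $\sum_{j\in J}c_j e^{in\theta_j}$ (writing $\la_j=\rho e^{i\theta_j}$), a nonzero trigonometric polynomial with at least two frequencies, whose consecutive ratio does not converge; the main obstacle is precisely this non-convergence claim, which I would establish by noting that if $\frac{u_{n+1}}{u_n}$ converged it would have to converge to some $\la_j$ with $|\la_j|=\rho$, and then a standard argument (e.g.\ examining $u_{n+1}-\la_j u_n$, which kills the $\la_j$-term but leaves a nonzero almost-periodic main term of the same order) yields a contradiction. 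Assembling the four cases completes the proof. \qed
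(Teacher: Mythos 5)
Your proposal follows essentially the same route as the paper: everything is read off from the explicit expansion \eqref{eq:leadasymp}, and parts (i)--(iii) are argued exactly as in the paper's proof (the vanishing of $\kappa_{max}$ as a nontrivial linear condition for (i), domination of the single leading term for (ii) and (iii)). For part (iv) you actually supply more detail than the paper does on the crucial non-convergence claim --- the paper merely asserts that the ratio has no limit when two competing top terms have equal degree --- and your sketch via $u_{n+1}-\la_j u_n$ together with the fact that a nonzero trigonometric polynomial with at least two frequencies does not tend to zero is a legitimate way to close that gap. One small slip in your case division: the dichotomy ``all $c_j=0$'' versus ``not all $c_j$ vanish'' is not used correctly, because if \emph{exactly one} $c_j$ is nonzero the trigonometric polynomial has a single frequency and the ratio \emph{does} converge (to that $\la_j$). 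The correct split is ``at least two $c_j$ nonzero'' (divergence, by your argument) versus ``at most one nonzero'' (convergence when exactly one survives, recursion into the lower-order terms when all vanish). Since the resulting loci are still cut out by the linear conditions $c_j=0$, the conclusion that the exceptional variety is a union of proper subspaces is unaffected; the same imprecision is in fact present in the paper's own claim that the limit exists ``if and only if exactly one of the $P_i$ is nonvanishing,'' which its subsequent $d_1>d_2$ analysis quietly corrects.
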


\begin{proof}  
  In order to prove (i) notice that the coefficient $\kappa_{max}$ in
  Definition~\ref{df:slow} is a nontrivial linear combination of the entries of the initial $k$-tuple $(u_{0},\ldots,u_{k-1})$ with
coefficients depending   on $\al_{1},\ldots,\al_{k}$. Therefore, the condition
  $\kappa_{max}=0$ determines a complex hyperplane $\SG_k$ in
  $\bC^k$. One can easily see that the hyperplane of slow growth is the direct sum of all Jordan blocks
  corresponding to the spectral numbers of a given recurrence~\eqref{eq:Basic} other than the leading one.

The assumptions of part  (ii) together with~\eqref{eq:leadasymp} yield
  $u_{n}=\kappa_{max}\la_{max}^{n}+\ldots$ for $n\in \bZ_+$, where the dots
stand for the  remaining terms in (\ref{eq:leadasymp}) corresponding to the
 spectral numbers whose absolute values are strictly smaller than $|\la_{max}|$.
Therefore, the quotient $\frac{u_{n+1}}{u_{n}}$ has a limit as $n\to\infty$
and this limit coincides with $\la_{max}$, as required. By
  definition $\la_{max}$ is a root of~\eqref{eq:Char}, which completes
  the proof of (ii). This last step can alternatively be carried out by
   dividing both sides of~\eqref{eq:Basic} by $u_{n-k+1}$ and then letting
$n\to \infty$. In view of Definition~\ref{domin} the same arguments show that the assertion in (iii) is true as well.

For the proof of part (iv) we proceed as follows. Take any maxmod-nongeneric recurrence relation of the form~\eqref{eq:Basic} and let
$\la_{1},\ldots,\la_{r}$, $r\le k$, be all its distinct spectral numbers
with maximal absolute value. Thus $|\la_i|=|\la_{max}|$ if and only $1\le i\le r$. Choose an initial
   $k$-tuple $IT=(u_{0},\ldots,u_{k-1})$ and denote by $P_{1},...,P_{r}$ the polynomials in~\eqref{eq:leadasymp} corresponding to
$\la_{1},\ldots,\la_{r}$, respectively,  for the sequence $\{u_{n}\mid n\in \bZ_+\}$ constructed using the given
recurrence with initial $k$-tuple $IT$ as above. Assuming, as we may, that
our recurrence relation is nontrivial we get from~\eqref{eq:leadasymp} that
$\la_i\neq0$ if $1\le i\le r$. We may further assume that the degrees
$d_1,\ldots,d_r$ of the polynomials $P_{1},...,P_{r}$, respectively, satisfy $d_1\ge\ldots\ge d_r$.
Under these conditions we now prove that
$\lim_{n\to \infty}\frac{u_{n+1}}{u_{n}}$ exists if and only if exactly
one of the polynomials $P_{1},\ldots,P_{r}$
is nonvanishing.
A direct check analogous to the proof of part (ii)
  shows that if only the polynomial $P_{1}$ is nonvanishing then
$\lim_{n\to \infty}\frac{u_{n+1}}{u_{n}}=\la_{1}$. If $r\ge 2$ and
$s\in \{2,\ldots,r\}$ is such that $P_{1},\ldots,P_{s}$ are all nonvanishing
polynomials among $P_{1},\ldots,P_{r}$ then using again~\eqref{eq:leadasymp} we get
   $$\frac {u_{n+1}}{u_{n}}=\frac {P_{1}(n+1)+P_{2}(n+1)\left(\dfrac
   {\la_{2}}{\la_{1}}\right)^{n+1}+\ldots +P_{s}(n+1)\left(\dfrac
   {\la_{s}}{\la_{1}}\right)^{n+1}+o(1)}{P_{1}(n)+P_{2}(n)\left(\dfrac
   {\la_{2}}{\la_{1}}\right)^n+\ldots +P_{s}(n)\left(\dfrac
   {\la_{s}}{\la_{1}}\right)^n+o(1)}.$$
Since $\big|\frac{\la_{i}}{\la_{1}}\big|=1$ and $\la_i\neq \la_1$,
$2\le i\le s$, it follows that if $d_1=d_2$ then the expression in the
right-hand side has no limit as $n\to\infty$. Therefore, if such a limit
exists then $d_1>d_2$, which gives us a complex subspace of $\bC^k$ of
(positive) codimension equal to $d_1-d_2$. Thus the exceptional variety is a
union of complex subspaces of $\bC^k$ of (in general) different codimensions. \qed
\end{proof}

\medskip 
Now we  prove Proposition~\ref{pr:2}. 
\medskip 

\begin{proof} Under the assumptions of the 
Proposition~\ref{pr:2} the leading root of the
asymptotic symbol equation~\eqref{eq:Symb} is a  well-defined analytic 
function in a sufficiently small neighborhood of $\Sigma_{I}$.
Therefore, the residue distribution~\eqref{resid} associated to this leading
root vanishes in a neighborhood of $\Sigma_{I}$.
Thus the set $\Sigma_{I}$ of slowly growing initial conditions can be
deleted from the support of the asymptotic ratio distribution $\nu$, which 
proves (i).

In order to prove that under the nondegeneracy assumptions of (ii)
the support of $\nu$ coincides with $\Xi_{\widetilde \phi}$ we show that in this case 
the sequence $\frac{f_{n+1}(p)}{f_{n}(p)}$ diverges for almost all 
$p\in \Xi_{\widetilde \phi}$. Indeed, due to analyticity  the first condition of 
(ii) implies that the recurrence relation is of nondominant type almost 
everywhere in $\Xi_{\widetilde \phi}$. The second condition of (ii) then implies that 
the sequence $\frac {f_{n+1}(p)}{f_{n}(p)}$ diverges almost everywhere in 
$\Xi_{\widetilde \phi}$, which settles part (ii) of the proposition. \qed
\end{proof}

\section{Application to biorthogonal polynomials }\label{s3}

Here we calculate explicitly the induced maxmod-discriminant  $\Xi_{\widetilde \phi}$ and the asymptotic ratio distribution $\nu$ in a number of cases. 
We illustrate the relation of  $\Xi_{\widetilde \phi}$ and $\Si_I$ with the roots of polynomials satisfying certain recurrence relations. 
 Let us first recall some definitions from~\cite{IM}. In {\em op.~cit.~}the
authors introduced two types of polynomial families related to what they call
$R_{I}$- and $R_{II}$-type continued fractions, respectively. These families
were later studied in \cite {Zh}.

A polynomial family of type $R_{I}$ is a system of monic
polynomials generated by
$$p_{n+1}(z)=(z-c_{n+1})p_{n}(z)-\la_{n+1}(z-a_{n+1})p_{n-1}(z)$$
with $p_{-1}(z)=0$, $p_{0}(z)=1$ and $p_{n}(a_{n+1})\la_{n+1}\neq 0$ for
$n\in \bZ_+$.

A polynomial family of type $R_{II}$ is a system of monic
polynomials generated by
$$p_{n+1}(z)=(z-c_{n+1})p_{n}(z)-\la_{n+1}(z-a_{n+1})(z-b_{n+1})p_{n-1}(z)$$
with $p_{-1}(z)=0$, $p_{0}(z)=1$ and
$p_{n}(a_{n+1})p_{n}(b_{n+1})\la_{n+1}\neq 0$ for $n\in \bZ_+$.

Assuming that
$$\lim_{n\to \infty}a_{n}=A,\;\lim_{n\to \infty}b_{n}=B,
\;\lim_{n\to \infty}c_{n}=C\text{ and }\lim_{n\to \infty}\la_{n}=\La,$$
where $A,B,C,\La$ are complex  numbers, we
describe the asymptotic ratio $\Psi_{max}(z)=\lim_{n\to\infty}\frac
{p_{n+1}(z)}{p_{n}(z)}$ and the asymptotic ratio measure $\nu$ for
$R_{I}$- and $R_{II}$-type polynomial families.
Using our previous results we restrict ourselves to 
a recurrence relation of the form
\begin{equation}\label{genform}
p_{n+1}(z)=Q_{1}(z)p_{n}(z)+Q_{2}(z)p_{n-1}(z),
\end{equation}
with the initial triple $p_{-1}(z)=0$,\ $p_{0}(z)=$1, $\deg Q_{1}(z)=1$ and
$\deg Q_{2}(z)\le 2$. 
By Theorem~\ref{th:Exist} and Corollary~\ref{cor:2}
the asymptotic ratio $\Psi_{max}(z)$ satisfies the
symbol equation
$$\Psi^2_{max}(z)=Q_{1}(z)\Psi_{max}(z)+Q_{2}(z)$$
everywhere in the complement of the union of the induced maxmod-discriminant 
$\Xi_{\widetilde \phi}$ and a finite set of (isolated) points. 
Notice also that the asymptotic ratio distribution $\nu$ satisfies the relation
$$\nu=\frac{\partial\Psi_{max}(z)}{\partial \bar z},$$
where the r.h.s. is interpreted as a distribution.  
By our previous results the support of $\nu$ coincides with $\Xi_{\widetilde \phi}$. In order to
describe $\Xi_{\widetilde \phi}$ we need the following simple lemma.

\begin{lemma}\label{lm:deg2}
A quadratic polynomial $t^2+a_{1}t+a_{2}$ has two roots with
the same absolute value if and only if there exists a real number $\eps\in
[1,\infty)$ such that $\eps a^2_{1} - 4 a_{2}=0$.
\end{lemma}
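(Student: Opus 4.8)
\textbf{Proof proposal for Lemma~\ref{lm:deg2}.}

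The plan is to work directly with the two roots $t_1,t_2$ of the quadratic $t^2+a_1t+a_2$, using the Vieta relations $t_1+t_2=-a_1$ and $t_1 t_2=a_2$. The condition ``$|t_1|=|t_2|$'' is the thing I want to translate into the algebraic condition on $a_1,a_2$. The key observation is that $|t_1|=|t_2|$ is equivalent to saying that $t_2 = e^{i\theta} t_1$ for some real $\theta$ (if $t_1=0$ this forces $t_2=0$ too, which is the degenerate case $a_1=a_2=0$, consistent with $\eps a_1^2-4a_2=0$ for any $\eps$). So first I would assume $t_1\neq 0$ and write $t_2/t_1 = e^{i\theta}$.

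Next I would compute the ratio $a_1^2/a_2$ in terms of this. We have
\[
\frac{a_1^2}{a_2} = \frac{(t_1+t_2)^2}{t_1 t_2} = \frac{t_1}{t_2} + 2 + \frac{t_2}{t_1} = e^{-i\theta} + 2 + e^{i\theta} = 2 + 2\cos\theta.
\]
Thus $a_1^2/a_2 = 2+2\cos\theta$, which ranges over the real interval $[0,4]$ as $\theta$ varies over the reals (with $a_2\neq 0$ automatically when $t_1,t_2\neq 0$). Conversely, if $a_1^2/a_2 \in [0,4]$ is real, I can solve $2+2\cos\theta = a_1^2/a_2$ for a real $\theta$ and reconstruct roots of equal modulus. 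Setting $\eps = 4a_2/a_1^2$ when $a_1\neq 0$, the condition $a_1^2/a_2\in(0,4]$ becomes $\eps\in[1,\infty)$, and $\eps a_1^2 - 4a_2 = 0$ holds by definition of $\eps$; the edge case $a_1^2/a_2 = 0$, i.e. $a_1=0$, gives two roots $\pm\sqrt{-a_2}$ of equal modulus, and here $\eps a_1^2 - 4a_2 = -4a_2 = 0$ forces $a_2 = 0$, so $a_1=a_2=0$ — I should double-check the boundary bookkeeping here, since $a_1 = 0$ with $a_2\neq 0$ still has equal-modulus roots yet no finite $\eps$ solves $\eps\cdot 0 - 4a_2 = 0$. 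This suggests the statement is implicitly assuming $a_1\neq 0$, or that one reads the case $a_1=0$ as covered separately; I would add a sentence clarifying that when $a_1=0$ the two roots trivially have equal modulus and the lemma's equation degenerates.

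The main obstacle, such as it is, is purely the careful handling of the degenerate cases ($a_1=0$, or $a_2=0$, or a double root), making sure the biconditional with the half-line $\eps\in[1,\infty)$ is exactly right at the endpoint $\eps=1$ (double root, $\theta=0$) and as $\eps\to\infty$. The core computation — expressing $a_1^2/a_2$ as $2+2\cos\theta$ — is immediate from Vieta, so the substance of the proof is just recording that $2+2\cos\theta$ sweeps out $[0,4]$ and rewriting ``$a_1^2/a_2\in[0,4]$, real'' as ``$\exists\,\eps\geq 1:\ \eps a_1^2 = 4a_2$.'' I would present it in the forward direction (equal moduli $\Rightarrow$ the $\eps$ condition) and then reverse each step, noting that every step is reversible.
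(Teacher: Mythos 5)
Your argument is correct and reaches the same characterization, but by a genuinely different route. The paper works from the explicit root formula $t_{1,2}=\frac{-a_{1}\pm\sqrt{a_{1}^{2}-4a_{2}}}{2}$ and uses the planar-geometry fact that $|u+v|=|u-v|$ iff $u$ and $v$ are orthogonal as vectors in $\bC\cong\bR^2$; this forces $\sqrt{a_{1}^{2}-4a_{2}}/a_{1}$ to be purely imaginary, hence $(a_{1}^{2}-4a_{2})/a_{1}^{2}\in(-\infty,0]$, which is exactly $\eps:=4a_{2}/a_{1}^{2}\in[1,\infty)$. You instead use Vieta and the ratio of the roots: $|t_1|=|t_2|$ iff $t_2/t_1$ lies on the unit circle, iff $a_{1}^{2}/a_{2}=2+2\cos\theta\in[0,4]$ is real, which is the same condition. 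The two proofs buy essentially the same thing at the same cost; yours avoids the square root and the orthogonality lemma in favor of the equivalence $w+w^{-1}\in[-2,2]\Leftrightarrow|w|=1$, and the converse direction is arguably cleaner in your version (the paper dismisses it as ``obvious''). You are also right to flag the boundary case $a_1=0$, $a_2\neq 0$: there the roots $\pm\sqrt{-a_2}$ have equal modulus but no finite $\eps$ satisfies $\eps a_1^2-4a_2=0$, so the ``only if'' direction of the lemma as literally stated fails. This is not a defect of your argument but of the statement; the paper's own proof silently divides by $a_1$ and so carries the same implicit assumption $a_1\neq 0$ (your $\theta=\pi$ endpoint, i.e.\ $\eps\to\infty$). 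Your suggestion to record this degenerate case explicitly is a genuine, if minor, improvement.
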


\begin{proof}
The roots of the polynomial are given by
$t_{1,2}=\frac{-a_{1}\pm\sqrt{a^2_{1}-4 a_{2}}}{2}$, so that if
$\vert t_{1}\vert=\vert t_{2}\vert$ then the complex numbers $a_{1}$ and
$\sqrt{a^2_{1}-4 a_{2}}$ considered as vectors in $\bC$ must be orthogonal.
Thus
$\big\vert\text{Arg}(a_{1})-\text{Arg}{\sqrt{a^2_{1}-4 a_{2}}}\big\vert
=\frac{\pi}{2}$, which is equivalent to saying that
$\frac{\sqrt{a^2_{1}-4a_{2}}}{a_{1}}$ is purely imaginary and therefore
$\frac{a^2_{1}-4 a_{2}}{a^2_{1}}\in (-\infty,0]$. The converse statement is
obvious. \qed
\end{proof}

\begin{corollary}\label{cor3}
The induced maxmod-discriminant $\Xi_{\widetilde \phi}\subset \bC$ is the set of
all solutions $z$ to the equation
\begin{equation}\label{eq:disc}
\eps Q_{1}^2(z)+4Q_{2}(z)=0,
\end{equation}
where $\eps\in [1,\infty)$.
\end{corollary}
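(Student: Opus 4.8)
The plan is to combine Lemma~\ref{lm:deg2} with the description of $\Xi_{\widetilde\phi}$ coming from the symbol equation. Recall that in the present setting the asymptotic symbol equation~\eqref{eq:Symb} reads $\Psi^2 - Q_1(z)\Psi - Q_2(z) = 0$, i.e.\ it is the monic quadratic $\Psi^2 + a_1(z)\Psi + a_2(z)$ with $a_1(z) = -Q_1(z)$ and $a_2(z) = -Q_2(z)$. By Definition~\ref{def7}, a point $z\in\bC$ lies in $\Xi_{\widetilde\phi}$ exactly when this quadratic (in the variable $\Psi$) is maxmod-nongeneric, that is, when it fails to have a unique simple root of maximal modulus. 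For a degree-two polynomial this happens precisely when its two roots have the same absolute value (if the two roots coincide, they trivially have the same modulus and neither is the unique simple maximal one; if they are distinct, nongenericity forces equal moduli).

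So the first step is to observe that $z\in\Xi_{\widetilde\phi}$ if and only if the quadratic $t^2 - Q_1(z)t - Q_2(z)$ has two roots of equal absolute value. The second step is to apply Lemma~\ref{lm:deg2} with $a_1 = -Q_1(z)$ and $a_2 = -Q_2(z)$: this tells us that the equal-modulus condition holds if and only if there exists a real $\eps\in[1,\infty)$ with $\eps a_1^2 - 4 a_2 = 0$, i.e.\ $\eps Q_1^2(z) + 4 Q_2(z) = 0$ (using $a_1^2 = Q_1^2(z)$ and $-4a_2 = 4Q_2(z)$). Combining the two equivalences gives that $\Xi_{\widetilde\phi}$ is exactly the set of $z$ for which~\eqref{eq:disc} admits a solution $\eps\in[1,\infty)$, which is the assertion of the corollary.

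The only point requiring a little care — and the one I would flag as the main (minor) obstacle — is matching the maxmod-nongenericity definition with the ``two roots of equal absolute value'' condition in the degenerate case where the quadratic has a double root, and more generally making sure the sign bookkeeping in passing from $t^2 + a_1 t + a_2$ (the normalization of Lemma~\ref{lm:deg2}) to $t^2 - Q_1 t - Q_2$ (the symbol equation) is done correctly so that $a_1^2$ really equals $Q_1^2$ and the factor of $4$ comes out with the right sign. Both are routine: a double root $t_0$ of a monic quadratic forces $|t_0| = |t_0|$ trivially, so such $z$ are maxmod-nongeneric and are also caught by Lemma~\ref{lm:deg2} (indeed $a_1^2 - 4a_2 = 0$, so $\eps = 1$ works), and the sign check is immediate since $a_1 = -Q_1$ enters only squared. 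Hence no separate argument is needed and the corollary follows directly by chaining the two equivalences.
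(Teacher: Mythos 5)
Your proposal is correct and is essentially the paper's intended argument: the corollary is stated there as an immediate consequence of Lemma~\ref{lm:deg2}, obtained by observing that for the quadratic symbol $\Psi^2 - Q_1(z)\Psi - Q_2(z)$ maxmod-nongenericity is equivalent to the two roots having equal modulus, and then substituting $a_1=-Q_1(z)$, $a_2=-Q_2(z)$ into the lemma. Your explicit handling of the double-root case and the sign bookkeeping just spells out what the paper leaves implicit.
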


We start with  the situation when $Q_{1}(z)$ and $Q_{2}(z)$
are real polynomials.  Let $C$
denote the unique (real) root of $Q_{1}(z)$ and denote by $D, E$ the branching
points of the symbol equation, i.e., the roots of
$Q_{1}^2(z)+4Q_{2}(z)=0$. If $Q_{1}(z)$ and $Q_{2}(z)$ are real there
are three basic cases:
\begin{enumerate}
\item $D$ and $E$ are real, $D<E$ and $C\in [D,E]$;
\item $D$ and $E$ are complex conjugate;
\item $D$ and $E$ are real, $D<E$ and $C\notin [D,E]$.
\end{enumerate}

\begin{case}
The discriminant $\Xi_{\widetilde \phi}$ is  the interval $[D,E]$. The density $\rho_\nu$ of
$\nu$ in this case is
positive and equals
\begin{equation}
\rho_\nu(x)=\frac{i}{2\pi}\sqrt{Q^2_{1}(x)+4Q_{2}(x)}dx,
\label{eq:c1}
\end{equation}
where the value of $\sqrt{Q^2_{1}(x)+4Q_{2}(x)}$ is taken with negative
imaginary part. (The value of $Q^2_{1}(x)+4Q_{2}(x)$ on the
interval $[D,E]$ is negative.)

\begin{remark}
Note that $\int_{D}^E d\nu$ is not
necessarily equal to $1$. In fact, $\int_{D}^E d\nu=\frac{(E-D)^2}{16}.$
For comparison let us take the sequence of inverse ratios
$\frac{p_{n}(z)}{p_{n+1}(z)}$ for the above sequence of monic polynomials
$\{p_{n}(z)\}$.  Let $\tilde\nu_{n}$ denote the residue distribution
of $\frac{p_{n}(z)}{p_{n+1}(z)}$ and set $\tilde \nu=\lim_{n\to \infty
}\tilde \nu_{n}$. One can easily see that the distribution $\tilde
\nu$ is actually a probability  measure (i.e., it satisfies $\int_{D}^E
d\widetilde\nu=1$)
   having the same support as $\nu$ and its density $\rho_{\widetilde \nu}$ is  given by
$$
\rho_{\widetilde\nu}(x)=-\frac{i}{2\pi}\frac{\sqrt{Q^2_{1}(x)+4Q_{2}(x)}}{
Q_{2}(x) }dx.$$
\end{remark}

\begin{figure}
\centerline{\hbox{\epsfysize=2.5cm\epsfbox{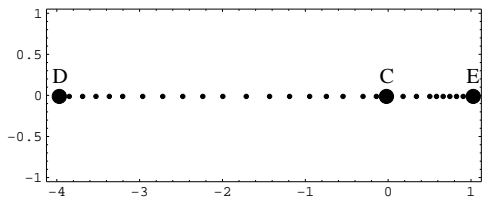}}}
\caption{Zeros of $p_{41}(z)$ satisfying the 3-term
recurrence relation $p_{n+1}(z)=zp_{n}(z)+(\frac
{3z-4}{4})p_{n-1}(z)$ with $p_{-1}(z)=0$ and $p_{0}(z)=1$}
\label{fig2}
\end{figure}

For the concrete example depicted in Figure~\ref{fig2} the asymptotic ratio
measure $\nu$ equals $\frac{2\sqrt{-x^2-3x+4}}{25\pi}dx$ and is supported on
the interval $[-4,1]$.
\end{case}

\begin{case}
The discriminant $\Xi_{\widetilde \phi}$ is the arc $D-C-E$ of the unique circle passing
through all three points $D,C,E$. In order to simplify the notation
while calculating $\nu$ assume that $\deg Q_{1}(z)=1$. Then by
an affine change of the variable $z$ we can normalize $Q_{1}(z)$ such that
$Q_{1}(z)=z$. Set $Q_{2}(z)=az^2+bz+c$, where $a,b,c$ are real numbers.
The condition that $Q_{1}^2(z)+4Q_{2}(z)=0$ has complex conjugate
  roots gives $b^2-c(4a+1)<0$. Then the center of the circle passing
through $D,C,E$ is given by $\big(\!-\frac{c}{b},0\big)$ and its radius is
$\big\vert \frac{c}{b}\big\vert$.
If $\gamma$ is the angle from the real positive half-axis to the ray through
the center of the circle and a point $(x,y)$ on
it then we have the following parametrizations: $x=\big\vert
\frac{c}{b}\big\vert\cos{\gamma}-\frac{c}{b}$, $y=\big\vert
\frac{c}{b}\big\vert\sin{\gamma}$ and $n=\cos{\gamma}+i\sin{\gamma}$ for the
unit normal to the circle at the point $(x,y)$.

In order to calculate the asymptotic ratio measure
$\nu=\frac{\partial\Psi_{max}(z)}{\partial \bar
z}$ notice 
that if $\Psi_{max}(z)$ is a piecewise
analytic function with smooth curves separating the domains where $\Psi_{max}(z)$
coincides with a given analytic function then $\nu=\frac{\partial
\Psi_{max}(z)}{\partial \bar z}$ is concentrated on these separation curves. Additionally, 
 at each smooth point where two branches $\Psi_{1}$ and
$\Psi_{2}$ meet $\nu$ satisfies the condition
$\nu=\frac {(\Psi_{1}-\Psi_{2})nds}{2\pi}$ (with an appropriate choice of
coorientation). Applying now this result to the case under consideration
we see that 
$$\nu=\frac{\partial\Psi_{max}(z)}{\partial \bar z}
=\frac{\sqrt{Q_{1}^2(z)+4Q_{2}(z)}n}{2\pi}\left\vert \frac{c}{b}\right\vert
d\gamma.$$

Explicit computations (using Mathematica$^{\textrm{TM}}$) give

\begin{equation}\label{eq:c2}
\nu=\frac{\sqrt 2}{b}(\cos{\gamma}+i\sin{\gamma})^{\frac 3
2}\sqrt{c(2b^2-(1+4a)c(1-\cos{\gamma}))}d\gamma.
\end{equation}

\begin{figure}
\centerline{\hbox{\epsfysize=5.5cm\epsfbox{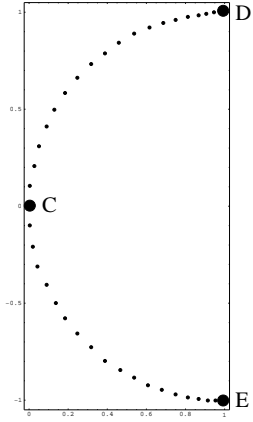}}}
\caption{Zeros of $p_{41}(z)$ satisfying the 3-term
recurrence relation $p_{n+1}(z)=zp_{n}(z)+(\frac {1-z}{2})p_{n-1}(z)$ with
$p_{-1}(z)=0$ and $p_{0}(z)=1$}
\label{fig3}
\end{figure}

For the concrete example considered in Figure~\ref{fig3} one has
$$\nu=\sqrt{2\cos\gamma}(\cos{\gamma}+i\sin{\gamma})^{\frac 3 2}d\gamma,$$
where $\gamma$ is the angle from the real positive half-axis to the ray
emanating from $\big(-\frac{c}{b},0)=(1,0)$ (i.e., the center of the
circle of radius $\big|\frac{c}{b}\big|=1$ containing the points
$D,C,E$ in this case) and passing through a variable point $(x,y)$ lying on
the left half-circle.
\end{case}

\begin{case}
The discriminant $\Xi_{\widetilde \phi}$ is the union of the interval $[D,E]$ and the
circle given (as in Case 2) by the equation $x(x-x_{0})+y^2=0$ where $x$
(respectively, $y$) is the real (respectively, imaginary) part of $z$ and
$x_{0}=-\frac {2c}{b}$.  The density $\rho_{\nu}$  of
$\nu=\frac{\partial\Psi_{max}(z)}{\partial \bar z}$ is given by
formula~\eqref{eq:c1} on the interval and by formula~\eqref{eq:c2} on the
circle. A concrete example is depicted in Figure~\ref{fig4} below.
\end{case}

\begin{figure}[!htb]
\centerline{\hbox{\epsfysize=3cm\epsfbox{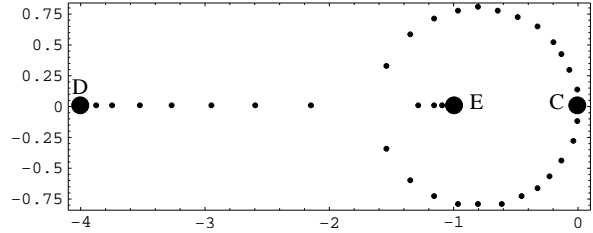}}}
\caption{Zeros of $p_{41}(z)$ satisfying the 3-term
recurrence relation $p_{n+1}(z)=zp_{n}(z)+(\frac {5z+4}{4})p_{n-1}(z)$ with
$p_{-1}(z)=0$ and $p_{0}(z)=1$}
\label{fig4}
\end{figure}

If either of the polynomials $Q_{1}(z)$ and $Q_{2}(z)$ has complex
coefficients it seems
difficult to give a more precise description of the support of $\nu$ than
the one obtained in Corollary~\ref{cor3}. Figure~\ref{fig5} below illustrates
 possible forms of this support.

\begin{figure}[!htb]
\centerline{\hbox{\epsfysize=5.5cm\epsfbox{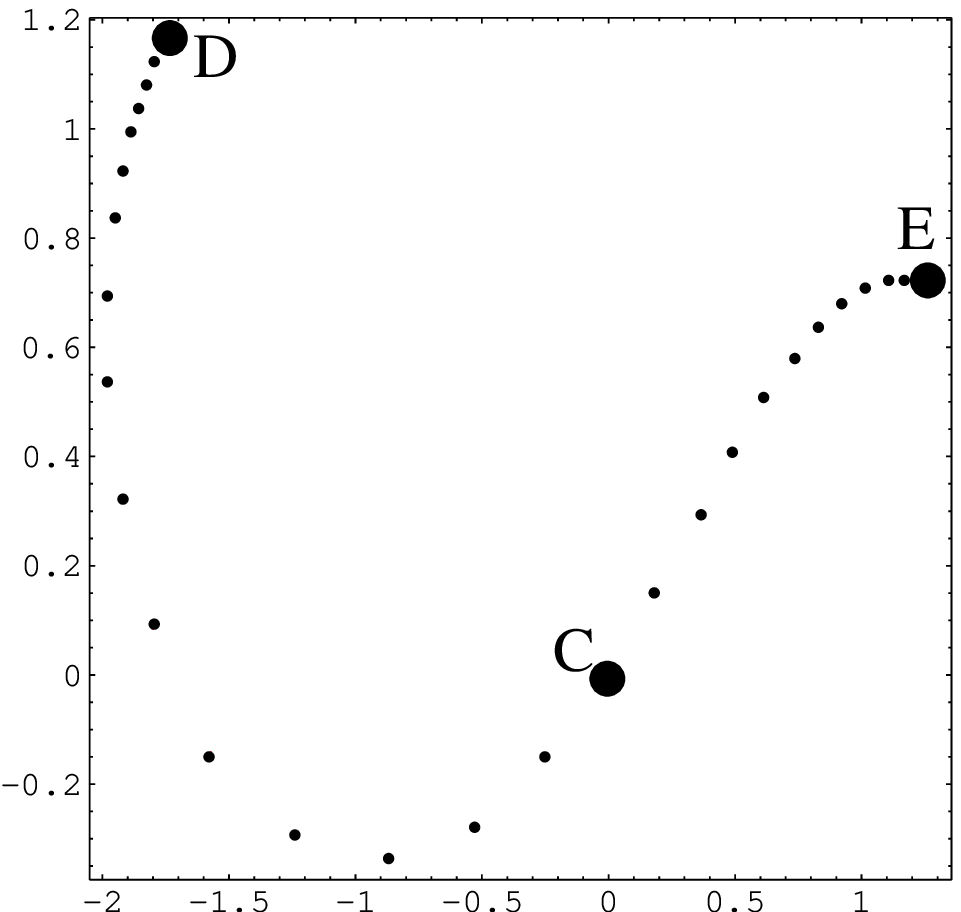}}
\hspace{0.5cm}\hbox{\epsfysize=5.5cm\epsfbox{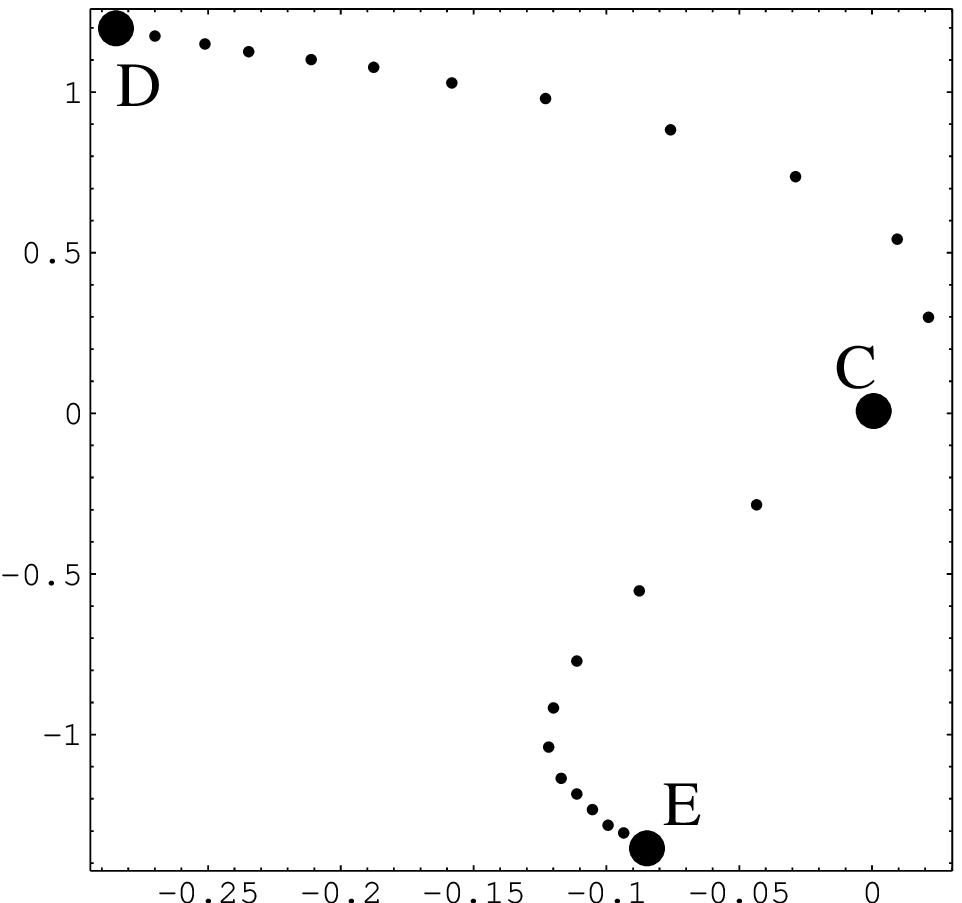}}}
\caption{Zeros of $p_{41}(z)$ for the recurrence relations
$p_{n+1}(z)=zp_{n}(z)+(iz^2+2z-1-3i)p_{n-1}(z)$ (left) and
$p_{n+1}(z)=zp_{n}(z)+((2-i)z^2+z+4-i)p_{n-1}(z)$ (right) with
$p_{-1}(z)=0$ and $p_{0}(z)=1$}
\label{fig5}
\end{figure}

Note that in all examples  in this
section we took the standard
initial polynomials $p_{-1}(z)=0$ and $p_{0}(z)=1$. Computer
experiments show no isolated zeros of $p_{n}(z)$ in these cases.
The following lemma shows that in the specific situation when $p_{-1}(z)=0$
and $p_{0}(z)=1$ the set $\Sigma_{I}$ of slowly growing initial conditions
(cf.~Definition~\ref{df:slow}) is actually empty.

\begin{lemma}
For any recurrence relation of the form~\eqref{genform} with initial values
$p_{-1}(z)=0$ and $p_0(z)=1$ the set $\Sigma_{I}$ of slowly growing initial
conditions is empty, comp. \cite{BSh}. 
\end{lemma}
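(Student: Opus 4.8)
The plan is to exploit the special structure of the initial triple $p_{-1}(z)=0$, $p_0(z)=1$ to show that the "fast growing" condition holds at every point $z\in\Omega\setminus\Xi_{\widetilde\phi}$, so that $\Sigma_I$ (the locus where the initial data is slowly growing) is empty. Recall from the proof of Theorem~\ref{th:Exist}(i) that $\Sigma_I$ consists exactly of those points $p$ where the initial $k$-tuple $I(p)$ lands on the hyperplane of slow growth $H(p)$; in our two-term case $k=2$, that hyperplane is one-dimensional, so we must rule out that the vector $\big(p_{-1}(z),p_0(z)\big)=(0,1)$ ever lies on it.

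First I would translate the problem into the companion-matrix picture used in Lemma~\ref{lm:varcoef}: the recurrence $p_{n+1}(z)=Q_1(z)p_n(z)+Q_2(z)p_{n-1}(z)$ corresponds to iterating $T_n(z)=\begin{pmatrix} Q_1(z) & Q_2(z)\\ 1 & 0\end{pmatrix}$ on the vector $\big(p_n(z),p_{n-1}(z)\big)\t$, starting from $\big(p_0(z),p_{-1}(z)\big)\t=(1,0)\t$. For a fixed $z\notin\Xi_{\widetilde\phi}$ the limiting matrix $T(z)=\begin{pmatrix} Q_1(z) & Q_2(z)\\ 1 & 0\end{pmatrix}$ (with the limiting coefficients) is maxmod-generic, and by Corollary~\ref{cor:linop} and Lemma~\ref{lm:basrec}(i) the hyperplane $H(z)$ of slow growth is precisely the direct sum of the non-leading Jordan block, i.e. the eigenline of $T(z)$ for the eigenvalue $\Psi_{\min}(z)$ of smaller modulus. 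So $\Sigma_I=\{z:\ (1,0)\t \text{ is an eigenvector of } T(z) \text{ for } \Psi_{\min}(z)\}$, together (by continuity) with possible limit points; but $(1,0)\t$ being an eigenvector of the companion matrix forces the second coordinate of $T(z)(1,0)\t$, which equals $1$, to be zero — a contradiction. Hence at no point can the initial vector lie on $H(z)$.

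The one subtlety is that for a recurrence with \emph{varying} coefficients the slow-growth hyperplane $H(z)$ is not literally the eigenline of the limiting companion matrix but is determined by $\mathbf w(z)$ from Corollary~\ref{cor:linop}, i.e. by the limit of the normalized products $e^{i\theta_n(z)}K_n(z)/\|K_n(z)\|=\mathbf u_{\max}(z)\mathbf w\t(z)$. To handle this I would argue directly with the sequence: the vector $\big(p_n(z),p_{n-1}(z)\big)\t$ never has vanishing first \emph{or} second coordinate identically (indeed $p_0=1\ne 0$ and each $p_n$ is a nonzero polynomial away from its finitely many zeros), and more to the point, the slow-growth locus is characterized by $\lim_n p_{n+1}(z)/p_n(z)=\Psi_{\min}(z)$ rather than $\Psi_{\max}(z)$. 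I would rule this out by the standard Wronskian/Casoratian trick: the Casoratian $W_n(z):=p_{n+1}(z)p_{n-1}(z)-p_n(z)^2$ satisfies $W_n(z)=-Q_2(z)\,W_{n-1}(z)$, hence $W_n(z)=(-Q_2(z))^{n+1}\,W_{-1}(z)$ with $W_{-1}(z)=p_0(z)p_{-2}(z)-p_{-1}(z)^2$; choosing the convention $p_{-1}=0$, $p_0=1$ makes $W_{-1}$ (after one step) equal to $1$ up to a factor of $-Q_2(z)$, so $W_n(z)$ is never identically zero. Combined with Lemma~\ref{lm:basrec}(ii), which gives $p_{n+1}/p_n\to\Psi_{\max}$ on the fast-growing complement, a point of $\Sigma_I$ would force $p_{n+1}/p_n\to\Psi_{\min}$, and feeding this into the Casoratian relation $p_{n+1}/p_n - p_n/p_{n-1}=W_n/(p_np_{n-1})$ yields $W_n/(p_np_{n-1})\to\Psi_{\max}-\Psi_{\min}\ne 0$, whereas one checks $W_n/(p_np_{n-1})=(-Q_2)^{n+1}/(p_np_{n-1})$ would have to vanish in the limit since $|p_n|$ grows like $|\Psi_{\min}|^n$ while $|Q_2|=|\Psi_{\min}\Psi_{\max}|>|\Psi_{\min}|^2$ — a contradiction.

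I expect the main obstacle to be the careful treatment of the varying-coefficient case at points where the leading/non-leading modulus gap degenerates or where $p_n(z)$ has zeros (so the ratio $p_{n+1}/p_n$ has poles); these are exactly the points of $\Xi_{\widetilde\phi}$ and its neighborhood, which by hypothesis are excluded, but one must make sure the Casoratian argument is carried out on $\Omega\setminus\Xi_{\widetilde\phi}$ and that the finitely many zeros of each $p_n$ drift toward $\Xi_{\widetilde\phi}\cup\Sigma_I$ and hence do not obstruct the limiting computation on a fixed compact subset of the complement. Alternatively, one can cite \cite{BSh} for the zero-dynamics input and keep the proof short; I would present the Casoratian computation as the self-contained core, since it makes the vanishing of $\Sigma_I$ completely transparent for the special initial data $p_{-1}=0$, $p_0=1$.
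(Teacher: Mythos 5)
Your first argument is precisely the paper's proof: writing the recurrence in companion form, the vector $(p_0(z),p_{-1}(z))\t=(1,0)\t$ is never an eigenvector of $T(z)=\left(\begin{smallmatrix}Q_1(z)&Q_2(z)\\ 1&0\end{smallmatrix}\right)$, since $T(z)(1,0)\t=(Q_1(z),1)\t$ has second coordinate $1$; hence the initial pair never lies on the eigenline of the non-leading eigenvalue, which at any $z\notin\Xi_{\widetilde\phi}$ is exactly the slow-growth hyperplane (Lemma~\ref{lm:basrec}(i)). The ``subtlety'' you then go on to address does not arise here: the coefficients $Q_1(z),Q_2(z)$ in \eqref{genform} do not depend on $n$, so this is the fixed-coefficient case and the eigenvector observation already finishes the proof. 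Your backup Casoratian argument is a legitimate alternative route, but as written it has two slips: under the slow-growth hypothesis $p_{n+1}/p_n\to\Psi_{\min}$ the difference $p_{n+1}/p_n-p_n/p_{n-1}=W_n/(p_np_{n-1})$ tends to $0$, not to $\Psi_{\max}-\Psi_{\min}$; and since $W_n=(-Q_2)^nW_0$ with $W_0=-1$ while $|p_n|$ would grow like $|\Psi_{\min}|^n$, the quotient $|W_n/(p_np_{n-1})|$ grows like $(|\Psi_{\max}|/|\Psi_{\min}|)^n\to\infty$ rather than vanishing. The intended contradiction (a quantity forced to tend to $0$ yet divergent) survives after these corrections, but the version you wrote down is not the one that works.
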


\begin{proof}
A recurrence relation of the form~\eqref{genform} may be rewritten as
$$\begin{pmatrix}
p_{n+1}(z) \\
p_n(z)
\end{pmatrix}
=T(z)\begin{pmatrix}
p_n(z) \\
p_{n-1}(z)
\end{pmatrix},\,\text{ where }\,T(z)=
\begin{pmatrix}
Q_1(z) & Q_2(z)\\
1 & 0
\end{pmatrix}.$$
It is easily checked that for any $z\in \bC$ the vector $(1,0)^t\in \bC^2$
is not an eigenvector of $T(z)$, which immediately implies the desired result. \qed
\end{proof}

We should point out that as soon as one changes the initial pair
$(p_{-1}(z),p_{0}(z))$ the polynomials
$p_{n}(z)$ immediately acquire additional isolated zeros away from
$\Xi_{\widetilde \phi}$.

\section{Related topics and open problems}\label{s5}

\subsection{} Does Theorem~\ref{anconvprod}(ii) hold in the algebraic category, i.e. when the varying coefficients are algebraic functions of bounded degree?

\subsection{} Although one of the assumptions of  Theorem~\ref{th:Exist} is that all $\phi_{k,n}(\x)$ are non-vanishing in $\Omega$ computer experiments 
show that it holds in many cases where some $\phi_{k,n}(\x)$ vanish.  Apparently one can construct complicated counterexamples when the statement fails, in general. But it would be important for applications to find some sufficient conditions weaker  than $\phi_{k,n}(\x)\neq 0$ guaranteeing the validity of Theorem~\ref{th:Exist}(ii).  

\subsection{}  Theory of multiple orthogonal polynomials which experiences a rapid development at present is a natural area of application our results, see e.g. \cite {CCvA} and reference therein. Above we only considered a rather simple instance of biorthogonal polynomials. It would be nice to calculate the maxmod-discriminant $\Xi_{\widetilde \phi}$ and the asymptotic ratio distribution in more examples. 

\subsection{}
It is natural to ask what topological
information about the induced maxmod-discriminant may be
obtained from the coefficients of a given recurrence relation.

\begin{problem}
Describe the topological properties of $\Xi_{\widetilde \phi}$ depending on the
coefficients of the recurrence relation (say assumed fixed and algebraic) and
establish, in particular, necessary and sufficient conditions for the set
$\Xi_{\widetilde \phi}$ to be compact.
   Are there any characteristic numbers associated with the
   singularities on $\Xi_{\widetilde \phi}$?
\end{problem}





\section{Appendix. On topology and geometry of maxmod-discriminants}\label{s4}

\begin{definition}\label{def10}
Denote by $\widetilde \Xi_{k}\subset
Pol_{k}=\{t^k+a_{1}t^{k-1}+\ldots+a_{k}\}$ the set of
    all  monic polynomials of degree $k$ with complex coefficients
    having at least two roots with the same absolute value.
\end{definition}

It is obvious from Definitions~\ref{def5}, \ref{def7} and~\ref{def10} that
$\widetilde \Xi_{k}$ contains the standard maxmod-discriminant $\Xi_{k}$.

\begin{proposition}\label{propo4}
$\widetilde \Xi_{k}$ is a real semialgebraic hypersurface  
    of degree at most $(4k-1)(4k-2)$ in $ Pol_{k}$. Furthermore, the hypersurface 
$\widetilde \Xi_{k}$ is quasihomogeneous
    with quasihomogeneous weight equal to $i$ for both the real and the
imaginary parts of $a_{i}$,  $i=1,2,\ldots,k$. (Here $Pol_k$ is considered  as a 
$2k$-dimensional  real affine  space with the real and imaginary parts of all
    $a_{i}$'s chosen as coordinates.) 
\end{proposition}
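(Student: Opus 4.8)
The plan is to prove Proposition~\ref{propo4} in three parts: first that $\widetilde \Xi_k$ is semialgebraic, then that it is a hypersurface of the claimed degree, and finally that it is quasihomogeneous with the stated weights.

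\textbf{Step 1: Semialgebraicity and the defining equations.} A monic polynomial $p(t)=t^k+a_1t^{k-1}+\ldots+a_k$ lies in $\widetilde\Xi_k$ exactly when there are two (distinct or equal, but distinct points of multiplicity) roots $\la,\mu$ with $|\la|=|\mu|$, i.e.\ $\la\bar\la=\mu\bar\mu$. The idea is to eliminate the roots. Introduce the resultant-type construction: write $p(t)=\prod(t-\la_j)$ and consider the polynomial $q(t,\bar t)$ obtained by forming $\prod_{j<\ell}\big((t-\la_j)(\bar t-\bar\la_\ell)-(t-\la_\ell)(\bar t-\bar\la_j)\big)$-style symmetric functions; more concretely I would argue as follows. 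The condition ``$p$ and the polynomial $p_c(t):=c^k\overline{p}(t/c)$ (for $|c|=1$, where $\overline p$ has conjugated coefficients) share a root'' is $\mathrm{Res}_t\big(p(t),p_c(t)\big)=0$, and $|\la|=|\mu|$ with $\la\ne\mu$ forces exactly such a shared root for the unimodular $c=\la/\mu$ — wait, this needs care; the cleanest route is: $|\la|=|\mu|$ iff $\la\bar\mu$ is a non-negative real of modulus $|\la|^2$, equivalently $\la/\mu$ is unimodular when both are nonzero (the case of a zero root, $a_k=0$, is handled separately and lands in a lower-dimensional piece). So $\widetilde\Xi_k\setminus\{a_k=0\}$ is the image under $(a_1,\ldots,a_k)\mapsto$ (coefficients) of the set of polynomials having two roots $\la,\mu$ with $\la/\mu=e^{i\theta}$, $\theta\in[0,2\pi)$. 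Substituting $\la=e^{i\theta}\mu$ into the pair of equations $p(\la)=0$, $p(\mu)=0$ and taking the resultant in $\mu$ gives, for each $\theta$, an algebraic equation $R(\theta,a_1,\ldots,a_k)=0$; viewing $e^{i\theta}=s+it$ with $s^2+t^2=1$ makes $R$ a real polynomial in $s,t,\mathrm{Re}\,a_i,\mathrm{Im}\,a_i$, and then $\widetilde\Xi_k$ is the projection to $(\mathrm{Re}\,a_i,\mathrm{Im}\,a_i)$-space of the real algebraic set $\{R=0,\ s^2+t^2=1\}$. By the Tarski--Seidenberg theorem this projection is semialgebraic. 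That it is a hypersurface (real codimension one) follows because the condition $|\la|=|\mu|$ is a single real equation on the roots, so generically cuts the $2k$-dimensional parameter space in codimension one; an explicit smooth point (e.g.\ a polynomial with exactly two simple roots on a common circle and all others strictly inside) shows it is nonempty and locally a hypersurface.

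\textbf{Step 2: The degree bound.} To bound the degree I would keep the resultant explicit. Treating $p$ as having indeterminate coefficients, the classical resultant $\mathrm{Res}_t(p(t),\,\overline p{}^{\,c}(t))$ where $\overline p{}^{\,c}(t)=t^k+\bar a_1 t^{k-1}c^{-1}\cdots$ has total degree $2k$ in the coefficients and degree $\le k(k-1)$ or so in the auxiliary unimodular variable; chasing the actual construction (resultant of two degree-$k$ polynomials is a $2k\times 2k$ determinant, each entry linear in the $a_i$'s or in $s,t$) gives degree $\le 2k$ in the $a$-variables and degree $\le 2k$ in $(s,t)$ at each stage, and the subsequent elimination of $(s,t)$ against $s^2+t^2=1$ multiplies degrees. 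The bound $(4k-1)(4k-2)$ suggests that the author uses a Bézout/elimination estimate: eliminating one variable between two equations of degrees $d_1,d_2$ yields degree $\le d_1 d_2$, and here after homogenizing one gets factors like $(4k-1)$ and $(4k-2)$ from the degrees of the two polynomials whose resultant is taken (perhaps $p$ and $\bar p$ combined with the circle equation, each of effective degree $\approx 4k$). I would reconstruct these two numbers precisely by writing the system whose elimination produces $\widetilde\Xi_k$ and invoking the standard bound ``degree of the projection $\le$ product of degrees of the eliminated system''; the factor-of-two over the naive $k$ comes from separating real and imaginary parts, which doubles the number of equations/variables.

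\textbf{Step 3: Quasihomogeneity.} This is the easy part. For $r>0$, replacing each root $\la_j$ by $r\la_j$ sends $p(t)$ to $r^k p(t/r)$, whose coefficient of $t^{k-i}$ is $r^i a_i$. Since $|\la_j|=|\la_\ell|\iff |r\la_j|=|r\la_\ell|$, the set $\widetilde\Xi_k$ is invariant under the scaling $a_i\mapsto r^i a_i$, i.e.\ under the $\bR_{>0}$-action with weight $i$ on $a_i$; as this acts the same way on $\mathrm{Re}\,a_i$ and $\mathrm{Im}\,a_i$, both real coordinates carry weight $i$. Hence $\widetilde\Xi_k$ is quasihomogeneous with the stated weights, and its defining polynomial (from Step 1, after clearing the $(s,t)$-elimination) can be chosen quasihomogeneous of the corresponding weighted degree.

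\textbf{Main obstacle.} The routine parts are semialgebraicity (Tarski--Seidenberg) and quasihomogeneity (a one-line scaling argument). The genuine work is pinning down the degree bound $(4k-1)(4k-2)$ exactly: this requires choosing the right elimination scheme — which pair of polynomials to take the resultant of, how to encode the unimodularity constraint $s^2+t^2=1$, and how to account for the doubling from real/imaginary parts — and then applying a sharp Bézout-type estimate rather than a crude one, since a careless bound would be much worse than quadratic in $k$. I expect the precise bookkeeping of these degrees, and verifying no spurious components inflate the degree, to be where the care is needed.
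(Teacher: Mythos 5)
Your overall architecture is the paper's: form the resultant of $P(t)$ and $P(ut)$ with $u$ unimodular, eliminate the circle variable, and get quasihomogeneity from the scaling $\la_j\mapsto r\la_j$, which sends $a_i\mapsto r^ia_i$. Step~3 is correct and is exactly the paper's argument. But there are two genuine gaps.

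First, your elimination scheme as written fails at $u=1$: the resultant $R(u,a_1,\ldots,a_k)=\mathrm{Res}_t\big(P(t),P(ut)\big)$ vanishes identically when $u=1$, so the real algebraic set $\{R=0,\ s^2+t^2=1\}$ contains the slice $u=1$ over all of $Pol_k$ and its projection is the whole space, not $\widetilde\Xi_k$. The paper removes this by showing $R$ is divisible by $(u-1)^k$ and working with the quotient $\widetilde R=R/(u-1)^k$, which is coprime with $u-1$; without this step both the description of $\widetilde\Xi_k$ and any degree bound extracted from the elimination collapse. (A smaller issue of the same kind: the locus $a_k=0$ also satisfies the resultant condition for every $u$, which is why the paper speaks of the equation of the \emph{analytic continuation} of $\widetilde\Xi_k$.)

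Second, you explicitly do not prove the degree bound $(4k-1)(4k-2)$, which is the main quantitative content of the proposition; you only conjecture where the two factors might come from, and your guess points in a slightly wrong direction. Your plan is to eliminate the two variables $s,t$ against $s^2+t^2=1$, a two-variable elimination for which the standard product-of-degrees estimates would not visibly yield $(4k-1)(4k-2)$. The paper instead parametrizes the unit circle rationally by $u=\frac{(1-i\theta)^2}{1+\theta^2}$ with a single real parameter $\theta$; after clearing the denominator $(1+\theta^2)^{2k}$, the real and imaginary parts of the numerator are polynomials of degrees $4k$ and $4k-1$ in $\theta$ and of degree at most $2k-1$ in the real coordinates $\Re a_i,\Im a_i$ (since $\widetilde R$ has degree $2k$ in $u$ and at most $2k-1$ in the $a_i$). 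A single resultant in $\theta$ of these two polynomials, using that their leading terms are proportional, gives degree at most $(2(4k)-2)(2k-1)=(4k-1)(4k-2)$. So the missing ingredients are precisely the division by $(u-1)^k$, the one-variable rational parametrization of the circle, and the degree bookkeeping for $\widetilde R$ that feeds the final resultant estimate.
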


\begin{proof}
In order to calculate the degree of $\widetilde \Xi_{k}$ let us describe
        an algorithm giving the equation for the analytic 
        continuation of $\widetilde \Xi_{k}$. This algorithm
may be presented as the  superposition of
    two resultants. Let $u=x+iy\in \bC$ and consider first the resultant of
    $P(t)=t^k+a_{1}t^{k-1}+\ldots+a_{k}$ and
    $P(ut)=(ut)^k+a_{1}(ut)^{k-1}+\ldots+a_{k}$, which we denote by
$R(u,a_{1},\ldots,a_{k})$. (Recall that all
    $a_{i}$'s are complex.)  Clearly,
    $R(u,a_{1},\ldots,a_{k})$ is a polynomial in the variable $u$. For any
    fixed value of $u$ the resultant $R(u,a_{1},\ldots,a_{k})$ vanishes if and
only if $P(t)$ and $P(ut)$ considered as polynomials in $t$ have a common
zero. One can easily see that $R(u,a_{1},\ldots,a_{k})$ is
    divisible by $(u-1)^k$ and that the quotient
$\widetilde R(u,a_{1},\ldots,a_{k})=R(u,a_{1},\ldots,a_{k})/(u-1)^k$ is coprime
    with $u-1$. We want to find an equation for the set of all monic complex
polynomials $P(t)$ such that
$P(ut)$ and $P(t)$ have a common zero for some $u\neq 1$ with
    $\vert u\vert=1$. The standard  rational  parametrization of the unit
    circle is given by $u=\frac{(1-i\theta)^2}{1+\theta^2}$,
    where $\theta\in \bR$.
    The result of the substitution
    $u=x+iy=\frac{(1-i\theta)^2}{1+\theta^2}$ in $\widetilde 
R(u,a_{1},\ldots,a_{k})$
    gives  a complex-valued rational function of the real variable
    $\theta$ whose  denominator equals to $(1+\theta^2)^{2k}$.
    By taking  the resultant of the real and imaginary
    parts of the numerator of the latter rational function one gets
    the required algebraic equation  for the analytic continuation of
$\widetilde \Xi_{k}$.
    This recipe allows us  to calculate (an upper bound for) the degree 
of $\widetilde \Xi_{k}$.
    It is easy to see that $\widetilde R(u,a_{1},\ldots,a_{k})$ is a
    polynomial of degree $2k$ in $u$ and of degree at most $2k-1$ in
    the variables $a_{1},\ldots,a_{k}$. After making the substitution
    $u=x+iy=\frac{(1-i\theta)^2}{1+\theta^2}$ and taking the real and
    imaginary parts of the above rational function one gets two
    polynomials of degrees at most $2k-1$ in the variables
    $\Re{a_{1}},\ldots,\Re{a_{k}},\Im{a_{1}},\ldots,\Im{a_{k}}$ and of 
degrees $4k$ and $4k-1$, respectively, in $\theta$. These
    polynomials have proportional leading terms and, therefore,  their resultant is
     a polynomial in the variables 
    $\Re{a_{1}},\ldots,\Re{a_{k}},\Im{a_{1}},\ldots,\Im{a_{k}}$ of
    degree at most $(2(4k)-2)(2k-1)=(4k-1)(4k-2)$.

The quasihomogeneity of $\widetilde \Xi_{k}$ with quasihomogeneous weights
    as specified in the statement of the proposition follows from the fact that
$\widetilde \Xi_{k}$ is
    preserved if one multiplies all roots of a polynomial by a
    non-negative real number. \qed
\end{proof}

\begin{remark}
The proof of Proposition~\ref{propo4} contains an algorithm giving the
explicit equation for the
analytic  continuation of $\widetilde \Xi_{k}$. The authors wrote a
Mathematica$^{\textrm{TM}}$ code doing this for small values of $k$. However, 
the resulting
expression contains several hundred terms even for $k=3$ and does not seem to
be of much use. For $k=3$ the estimate $(4k-1)(4k-2)$ for the degree
is sharp. It is very plausible that this estimate is actually sharp for 
arbitrary $k$ although in the general case it seems very
difficult to check the necessary nondegeneracy conditions while performing 
the superposition of the two resultants described in the above algorithm.
\end{remark}

\begin{proposition}\label{propo5}
$\widetilde \Xi_{k}$ is a singular real hypersurface with a
        (singular) boundary coinciding with the usual discriminant
	      $\mathcal D_{k}\subset Pol_{k}$, where   $\mathcal D_{k}$ is the set of all
polynomials in $Pol_{k}$ with multiple roots. Moreover, $\widetilde \Xi_{k}$
has no local singularities (i.e., singularities on a given
        branch) outside $\mathcal D_{k}$. The list of singularities of
$\,\widetilde \Xi_{k}$ is finite for any given $k\in \bN$, i.e. its  
singularities have no moduli.
\end{proposition}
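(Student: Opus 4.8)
\textbf{Proof proposal for Proposition~\ref{propo5}.} The plan is to analyze $\widetilde\Xi_k$ as the image of an explicit semialgebraic construction and to identify its singular locus via the geometry of the defining equations. First I would set up the ``incidence variety'' picture: let $\widetilde Z_k\subset Pol_k\times S^1$ be the set of pairs $(P,u)$ with $|u|=1$, $u\neq 1$, such that $P(t)$ and $P(ut)$ have a common root $t$; equivalently $P$ has two roots $\la,\mu$ with $\mu=u\la$, hence $|\la|=|\mu|$. Then $\widetilde\Xi_k$ is the closure of the projection of $\widetilde Z_k$ to $Pol_k$, together with the ``limit'' contributions coming from $u\to 1$, which is exactly where a pair of equimodular roots collides — i.e.\ where $P$ acquires a multiple root. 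This already signals that $\D_k$ will appear as the boundary, and it explains why the boundary is ``singular'': $\D_k$ is itself singular (cuspidal edges, etc.).

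Next I would prove the boundary statement. A point $P$ is an \emph{interior} point of $\widetilde\Xi_k$ (as a hypersurface-with-boundary) when the witnessing pair of equimodular roots $\la\neq\mu$ is \emph{simple} among the max-modulus data and $u=\mu/\la$ is a genuine point of $S^1\setminus\{1\}$; near such $P$ one can use the two real conditions $|\la(P)|=|\mu(P)|$ (where $\la,\mu$ are the locally analytic root functions, well-defined by the implicit function theorem since the relevant roots are simple) to see that $\widetilde\Xi_k$ is locally a smooth real hypersurface — in particular no local singularities off $\D_k$. Conversely, as $u\to1$ the two roots merge, $P$ limits into $\D_k$, and the locally-analytic description of the roots breaks down (Puiseux behavior), producing the boundary. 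I would make this precise by writing $u=e^{i\phi}$ and showing that $\phi$ extends to a continuous function on the relevant branch of $\widetilde\Xi_k$ vanishing exactly on $\widetilde\Xi_k\cap\D_k$, so that $\widetilde\Xi_k$ is a manifold-with-boundary near generic points of $\D_k$ and the boundary is $\D_k$ (with its own singularities inherited). That $\D_k$ is genuinely the full boundary, not a proper subset, follows because every polynomial with a double root is a limit of polynomials with two distinct roots of equal modulus (perturb the double root $\la$ to $\la e^{\pm i\epsilon}$).

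For the finiteness-of-singularity-types claim I would argue by quasihomogeneity plus semialgebraicity. By Proposition~\ref{propo4}, $\widetilde\Xi_k$ is a semialgebraic set of bounded degree $(4k-1)(4k-2)$ that is quasihomogeneous with positive weights $\wt(\Re a_i)=\wt(\Im a_i)=i$. A semialgebraic hypersurface of bounded degree has only finitely many local topological types of singularities (e.g.\ by Hardt triviality / the existence of a semialgebraic Whitney stratification whose combinatorics is bounded in terms of the degree), and — crucially — positive-weight quasihomogeneity implies that the whole germ at $0$ determines the germ at every point of the ``cone lines'', so there are no continuous families (moduli) of singularity germs: the quasihomogeneous $\C^*$-action (here the $\bR_{>0}$-scaling of the roots) contracts $Pol_k$ to the origin while preserving $\widetilde\Xi_k$, so the stratification can be taken $\bR_{>0}$-invariant and thus has finitely many strata, each contributing a single singularity type. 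I would phrase this as: the scaling action makes $\widetilde\Xi_k$ a ``conical'' semialgebraic set, hence it is locally trivial along each stratum of a finite stratification.

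The main obstacle I expect is the \emph{local} analysis along $\D_k$ — precisely describing how $\widetilde\Xi_k$ degenerates as a pair of equimodular roots collides, since near a point of $\D_k$ the two roots are no longer given by analytic functions of the coefficients but by a square-root (Puiseux) parametrization, and one must check that the equimodularity condition $|\la|=|\mu|$ pulls back to a well-behaved real-analytic half-space boundary rather than something worse. Handling higher-multiplicity points of $\D_k$ (where several roots collide, or two separate equimodular pairs collide simultaneously) is where the genuine singularities of $\widetilde\Xi_k$ live, and controlling all of these uniformly — rather than case-by-case for small $k$ — is the delicate part; the bounded-degree semialgebraic input from Proposition~\ref{propo4} is what ultimately tames it, but translating ``bounded degree'' into ``finite list of singularities with no moduli'' rigorously requires invoking the appropriate finiteness theorem for semialgebraic families and checking its hypotheses in this quasihomogeneous setting.
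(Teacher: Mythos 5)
Your outline is essentially sound, but it takes a genuinely different route from the paper, and the comparison is instructive. You work entirely in coefficient space: an incidence variety in $Pol_k\times S^1$, locally analytic root functions where the equimodular roots are simple, and a Puiseux analysis where they collide. The paper instead pulls everything back to the root space $\bC^k$ via the Vieta map $Vi:\bC^k\to Pol_k$: the preimage of $\widetilde\Xi_k$ is the explicit union of real quadratic cones $\mathfrak{C}_{i,j}=\{|x_i|=|x_j|\}$, the preimage of $\mathcal D_k$ is the Coxeter arrangement $\{x_i=x_j\}$, and $Vi$ is a local diffeomorphism off the latter. Local smoothness off $\mathcal D_k$ is then immediate (each cone is smooth where $x_i,x_j$ are not both zero), and the boundary statement follows because $\mathfrak{C}_{i,j}\supset L_{i,j}$ and $Vi|_{\mathfrak{C}_{i,j}}$ has a fold along $L_{i,j}$ --- which is exactly the $\phi\mapsto-\phi$ identification you describe with $u=e^{i\phi}$, but seen in coordinates where no Puiseux expansion is ever needed. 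This matters: the step you flag as ``the main obstacle'' (controlling the degeneration along $\mathcal D_k$, including higher-multiplicity collisions, uniformly in $k$) is precisely what the Vieta-map picture eliminates, since in root coordinates the set is a finite union of explicit quadrics and hyperplanes and the local type at any $P$ is read off from discrete data (the partition of the roots of $P$ by absolute value together with multiplicities). That combinatorial encoding is also how the paper gets ``no moduli''; your alternative via bounded degree, quasihomogeneity and Hardt/Whitney finiteness is legitimate but yields only finiteness of local \emph{topological} types and is less explicit. One small slip: $|\la(P)|=|\mu(P)|$ is a single real condition (one real equation cutting out a real hypersurface), not two; you still need to check its gradient is nonzero, which holds because two distinct equimodular roots cannot both vanish.
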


\begin{proof}
Indeed, to show that $\mathcal D_{k}\subset
        Pol_{k}$ is the boundary of $\widetilde \Xi_{k}$ consider the
        standard Vieta map  $Vi:\bC^k \to Pol_{k}$ sending a $k$-tuple
        of (labeled) roots to the coefficients of the monic
        polynomial with these roots, i.e, to the elementary symmetric
        functions of these roots with alternating signs. It is known that the
Vieta map induces a local diffeomorphism
        on the complement $\bC^k\setminus \mathcal T_{k}$, where
        $\mathcal T_{k}$ is the standard Coxeter hyperplane arrangement
        consisting of $\binom{k}{2}$ hyperplanes $L_{i,j}$ given by
        $L_{i,j}: x_{i}=x_{j}$, $1\le i<j\le k$. An easy observation is that
$\mathcal T_{k}$
        coincides with the preimage $Vi^{-1}(\mathcal D_{k})$ of
        the discriminant $\mathcal D_{k}$.
        Consider now  the arrangement of quadratic cones
        $\mathfrak C_{k}=\cup_{i<j}\mathfrak{C}_{i,j}$ in $\bC^k$,
        where $\mathfrak{C}_{i,j}$ is given by the
        equation $\vert x_{i}\vert=\vert x_{j}\vert$.
        Obviously, $\mathfrak C_{k}$ coincides with the preimage
        $Vi^{-1}(\widetilde \Xi_{k})$.
        From the defining equation it is clear that  each
        $\mathfrak{C}_{i,j}$ is smooth outside the origin. Therefore,
$\widetilde \Xi_{k}$ is locally smooth (that is, it consists of smooth local
        branches) outside $\mathcal D_{k}$. Note also
        that for $1\le i<j\le k$ the quadratic cone $\mathfrak{C}_{i,j}$
contains the complex hyperplane
        $L_{i,j}$ and that the restriction $Vi\big|_{\mathfrak{C}_{i,j}}$ has a
fold near a generic point of $L_{i,j}$. Therefore,  $\mathcal D_{k}$
        is the boundary of $\widetilde \Xi_{k}$. The absence of the  moduli
        for the
        singularities of $\widetilde \Xi_{k}$  can be
        derived from that for the singularities of $\mathcal D_{k}$.
        The type of a (multi)singularity of
        $\widetilde \Xi_{k}$ near a
        polynomial $P(z)\in \widetilde \Xi_{k}$ is encoded
        in the following combinatorial information about the roots of
        $P(z)$. 
        Determine first the multiplicity (possibly vanishing) of the root of
        $P(z)$ at the origin and the set of
        all distinct positive absolute values for all the roots of $P(z)$.
        For each such positive absolute
value determine the number and multiplicity of   distinct roots having 
        this absolute value. If all the roots are simple
        then (as above) $\widetilde \Xi_{k}$ locally consists of
        (in general, nontransversal) smooth branches. Otherwise $P(z)$
        lies in $\mathcal D_{k}$, which is the boundary of $\widetilde
        \Xi_{k}$, and local branches of $\widetilde
        \Xi_{k}$ near $P(z)$ might have boundary singularities. \qed 
\end{proof}

Let $\Omega$ be an open subset of $\bR^2$ and assume that the coefficients of
the recurrence relation~\eqref{eq:General} are
  sufficiently generic. The arguments in the proof of Proposition~\ref{propo5}
applied to the induced maxmod-discriminant $\Xi_{\widetilde \phi}$ of the associated
symbol equation~\eqref{eq:Symb} imply that the only possible singularities on
the  curve $\Xi_{\widetilde \phi}$ are the endpoints and ``Y-type''
singularities, i.e., triple (local) rays emanating from a
given point, see Figures~\ref{fig1}--\ref{fig5}. Note however that the
transversal
intersection of smooth branches (that is, the interval and the circle) 
on Figure~\ref{fig4} is
actually unstable since it disappears under a small perturbation of the
coefficients of the recurrence relation.

Given a topological space $X$ let $\widehat X$ be its one-point
compactification.

\begin{proposition}\label{pr:hom}
The following properties hold for any $k\in \bN:$
\begin{enumerate}
\item[(i)] The hypersurfaces $\widetilde \Xi_{k}$ and $\Xi_{k}$ are
contractible in $Pol_{k}$, and, therefore,   their usual (co)homology groups are
trivial.
\item[(ii)] The one-point compactification $\widehat\Xi_{k}$ is
Alexander dual in the sphere $S^{2k}$ to a circle $S^1$. Therefore,
$\widehat\Xi_{k}$ is homotopically equivalent to a sphere $S^{2k-2}$, so
that $H_{i}\big(\widehat\Xi_{k},\mathbb Z\big)\cong \mathbb Z$ if
$i\in\{0,2k-2\}$ and $H_{i}\big(\widehat\Xi_{k},\mathbb Z\big)=0$ otherwise.
\item[(iii)] The one-point compactification $\widehat{\widetilde\Xi}_{k}$ is
Alexander dual in the sphere $S^{2k}$ to a $(k-1)$-dimensional
torus $\mathcal T^{k-1}$. Hence
$\widetilde{H}_{i}\big(\widehat{\widetilde\Xi}_{k},\mathbb Z\big)
\cong \widetilde{H}^{2k-i-1}\big(\mathcal T^{k-1},\mathbb Z\big)$ for any
$i$. Here $\widetilde {H}_{i}(X)$ (respectively, $\widetilde {H}^{i}(X)$)
stands for reduced mod point homology (respectively, cohomology) of the
topological space $X$, see e.g.~\cite{Va}.
\end{enumerate}
\end{proposition}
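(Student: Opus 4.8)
\textbf{Proof proposal for Proposition~\ref{pr:hom}.}

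The plan is to deduce all three parts from the Vieta map and Alexander duality in $S^{2k}$, where we regard $Pol_k\cong\bC^k\cong\bR^{2k}$ and $S^{2k}=\widehat{Pol_k}$.

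First I would prove (i). Both $\widetilde\Xi_k$ and $\Xi_k$ are invariant under the scaling action $P(t)\mapsto s^{-k}P(st)$ for $s>0$, which multiplies all roots by $s$; this is exactly the quasihomogeneity recorded in Proposition~\ref{propo4}. Letting $s\to 0$ contracts any monic polynomial to $t^k$, and $t^k$ lies in both $\widetilde\Xi_k$ and $\Xi_k$ (all roots equal, hence equal moduli, hence maxmod-nongeneric). So the straight-line-in-log-scale homotopy $H(P,s)$ retracts each set to the point $\{t^k\}$, giving contractibility; triviality of reduced (co)homology follows. For this I must check the homotopy stays inside the set — immediate since the scaling preserves the defining conditions (coincidence of two moduli; non-uniqueness or non-simplicity of the maxmod root) for every $s>0$, and the endpoint $t^k$ is in the set.

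Next, parts (ii) and (iii) are Alexander duality computations: for a compact, reasonably nice $X\subset S^{2k}$ one has $\widetilde H_i(\widehat X;\bZ)\cong\widetilde H^{2k-i-1}(S^{2k}\setminus X;\bZ)$, and $\widehat X$ is (up to homotopy) the one-point compactification of $X$ because $X$ is closed in $Pol_k$. So the real task is to identify the homotopy types of the complements $Pol_k\setminus\Xi_k$ and $Pol_k\setminus\widetilde\Xi_k$. For (iii): $Pol_k\setminus\widetilde\Xi_k$ consists of monic polynomials all of whose roots have pairwise distinct moduli. Ordering the $k$ roots by increasing modulus $0<r_1<r_2<\dots<r_k$ gives a well-defined unordered-to-ordered splitting (no monodromy, since the moduli are distinct and the root of smallest modulus etc. are canonically labelled), so $Pol_k\setminus\widetilde\Xi_k$ is homeomorphic to the space of $k$-tuples $(z_1,\dots,z_k)$ with $0<|z_1|<\dots<|z_k|$. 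The map $(z_1,\dots,z_k)\mapsto(z_1/|z_1|,\dots,z_k/|z_k|)\in(S^1)^k$ together with the contractible open simplex $\{0<r_1<\dots<r_k\}$ exhibits this as homotopy equivalent to the torus $\mathcal T^k$ — but I should be careful: the overall labelling uses the ordering by modulus, so there is no residual symmetric-group quotient, and the radial coordinates $(r_1,\dots,r_k)$ form a convex (hence contractible) region, leaving the $k$ angular $S^1$ factors. Hmm — the statement claims a $(k-1)$-torus, not a $k$-torus; the discrepancy must come from the fact that we want $\widehat{\widetilde\Xi}_k$ in $S^{2k}$ and the complement of $\widetilde\Xi_k$ in $S^{2k}$ (not in $Pol_k$) adds the point at infinity, whose neighborhood deformation-retracts onto a large sphere and kills one circle factor; I would make this precise by a Mayer--Vietoris or by directly checking that $S^{2k}\setminus\widetilde\Xi_k$ deformation retracts onto $\mathcal T^{k-1}$ after compactifying the radial simplex. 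Once the complement is identified with $\mathcal T^{k-1}$, Alexander duality gives $\widetilde H_i(\widehat{\widetilde\Xi}_k;\bZ)\cong\widetilde H^{2k-i-1}(\mathcal T^{k-1};\bZ)$, which is the stated formula.

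For (ii), the complement $Pol_k\setminus\Xi_k$ consists of maxmod-generic polynomials: those with a unique simple root $\la_{max}$ of strictly largest modulus. The remaining $k-1$ roots are an arbitrary monic degree-$(k-1)$ polynomial whose roots all have modulus $<|\la_{max}|$; after dividing by $|\la_{max}|$ and writing $\la_{max}=r\,e^{i\theta}$, the data is $\theta\in S^1$, $r>0$ (contractible), and a point in the space of monic degree-$(k-1)$ polynomials with all roots in the open unit disc, which is a convex — hence contractible — open subset of $\bC^{k-1}$ (convexity: a convex combination of two such polynomials has all roots in the disc by e.g. the Grace--Walsh--Szegő or Obreschkoff-type argument, or more elementarily because "all roots in the open unit disc" is equivalent to a Schur--Cohn system of strict inequalities that define a convex set — I'd cite this). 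Therefore $Pol_k\setminus\Xi_k\simeq S^1$, and again passing to $S^{2k}$ the extra point at infinity does not change this up to homotopy (the large sphere $S^{2k-1}$ is highly connected). Alexander duality then gives $\widehat\Xi_k\simeq_{\text{homology}} S^{2k-2}$, i.e. $H_i(\widehat\Xi_k;\bZ)=\bZ$ for $i\in\{0,2k-2\}$ and $0$ otherwise.

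The main obstacle I anticipate is the convexity/contractibility claim for the space of monic polynomials of degree $k-1$ with all roots in the open unit disc (used in (ii)) — it is true but needs a genuine argument, e.g. via the Schur--Cohn criterion realizing this space as the interior of the intersection of the half-spaces cut out by the sign conditions on the Schur--Cohn determinants, or via an explicit star-shaped structure with respect to the polynomial $t^{k-1}$. A secondary subtlety is bookkeeping the "$+$ point at infinity" correctly so that the torus factor drops from $\mathcal T^k$ to $\mathcal T^{k-1}$ in (iii) and so that the $S^1$ in (ii) survives; I would handle both uniformly by computing $\widetilde H^*(S^{2k}\setminus X)$ rather than $\widetilde H^*(Pol_k\setminus X)$ from the start, using that $S^{2k}\setminus X \simeq (Pol_k\setminus X)$ if and only if a punctured neighborhood of $\infty$ retracts into $Pol_k\setminus X$, which one checks from the cone structure at infinity.
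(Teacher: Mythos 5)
Your part (i) is correct and is exactly the paper's argument (contract via the root‑scaling that realizes the quasihomogeneity). Part (ii) reaches the right complement $\Upsilon_k=Pol_k\setminus\Xi_k\simeq S^1$, but your primary justification for contractibility of the space of monic degree-$(k-1)$ polynomials with all roots in the open unit disc is wrong: that set is \emph{not} convex. For example, $(t-0.9)^2$ and $(t-0.9i)^2$ both lie in it, but their midpoint $t^2-(0.9+0.9i)t+\tfrac{(0.9)^2+(0.9i)^2}{2}$ has roots $0$ and $0.9+0.9i$, and $|0.9+0.9i|=0.9\sqrt2>1$. The set is nevertheless contractible via the root-scaling $P(t)\mapsto s^{k-1}P(t/s)$, $s\in[0,1]$ (the same homotopy as in (i)); this is what the paper implicitly relies on, and your hedge toward a ``star-shaped'' argument points in that direction, so (ii) is repairable.

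The genuine gap is in (iii). You identify $Pol_k\setminus\widetilde\Xi_k$ with $\{(z_1,\dots,z_k):0<|z_1|<\dots<|z_k|\}\simeq\mathcal T^k$, notice the mismatch with the claimed $\mathcal T^{k-1}$, and try to explain it away with the point at infinity. Both halves of this are incorrect. First, the complement also contains every polynomial with a \emph{simple root at the origin} and the remaining moduli distinct: such a polynomial has no two roots of equal modulus, hence is not in $\widetilde\Xi_k$. So the innermost root ranges over the full open disc $\{|z_1|<|z_2|\}$ (origin included), which is contractible, and only $k-1$ angular circles survive; this is precisely how the paper's Lemma~\ref{lm:hom}(i) gets $\mathcal T^{k-1}$, by fibering over $\{0<r_2<\dots<r_k\}$ with fibre $\mathcal T^{k-1}\times D_{r_2}$. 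Second, the point at infinity cannot rescue a wrong count: since $\widetilde\Xi_k$ is closed and unbounded in $Pol_k$, its closure in $S^{2k}$ \emph{is} $\widehat{\widetilde\Xi}_k=\widetilde\Xi_k\cup\{\infty\}$, so the set complementary to it in $S^{2k}$ is exactly $Pol_k\setminus\widetilde\Xi_k$ --- infinity sits on the compact side of the duality, not in the complement, and there is no ``large sphere near $\infty$'' available to kill a circle factor. The same remark applies to your parenthetical in (ii), though there it happens to be harmless. With the corrected identification of the complement, the Alexander duality bookkeeping you set up does give the stated conclusions.
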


\begin{proof}
The contractibility of $\widetilde \Xi_{k}$ and $\Xi_{k}$
	   follows directly from their quasihomogeneity. In order to show (ii)
	   and (iii) we use the standard Alexander duality in $Pol_{k}$.
Let $\Upsilon_{k}= Pol_{k}\setminus \Xi_{k}$ and $\widetilde
	   \Upsilon_{k}= Pol_{k}\setminus \widetilde
	   \Xi_{k}$ denote the complements in $Pol_{k}$ of $\Xi_{k}$ and
$\widetilde \Xi_{k}$, respectively. Then
$$\widetilde H^{2k-i}(\Upsilon_{k},\mathbb Z)\cong \widetilde
	   H_{i}\big(\widehat {\Xi}_{k},\mathbb Z\big)\,
	  \text{ and }\,
	   \widetilde H^{2k-i}\big(\widetilde\Upsilon_{k},\mathbb Z\big)
\cong \widetilde H_{i}\big(\widehat{\widetilde\Xi}_{k},\mathbb Z\big).$$
Obviously, the space $\widetilde\Upsilon_{1}\simeq\Upsilon_{1}\simeq\bC$ is
contractible. The next lemma describes the topology of $\Upsilon_{k}$
and $\widetilde\Upsilon_{k}$ for $k>1$.

\begin{lemma}\label{lm:hom}
For any $k\in \bN$, $k\ge 2$, one has
\begin{enumerate}
\item[(i)] $\widetilde\Upsilon_{k}$ is an open $2k$-dimensional
manifold which is homotopically equivalent to the $(k-1)$-dimensional torus
$\mathcal T^{k-1}$.
\item[(ii)] $\Upsilon_{k}$ is an open $2k$-dimensional manifold
$\Upsilon_{k}$ which is homotopically equivalent to a circle $S^1$.
\end{enumerate}
\end{lemma}

\begin{proof}
(i) The space $\widetilde\Upsilon_{k}$ consists of
all $k$-tuples of complex numbers with distinct absolute
values. Let $X_{k}=\{r_{1}<r_{2}<\ldots<r_{k}\mid r_1\ge 0\}$ denote the
set of all possible $k$-tuples of distinct absolute values. Then
$\widetilde\Upsilon_{k}$ is ``fibered'' over $X_{k}$ with a
``fiber'' which is isomorphic to $\mathcal T^{k}$ if $r_{1}\neq 0$ and
isomorphic to $\mathcal T^{k-1}$ if $r_{1}=0$. In order to get
	       an actual fibration consider the set $\widehat
	       X_{k}=\{0<r_{2}<\ldots <r_{k}\}$ of the absolute values of the
	       roots starting from the second smallest.
Now $\widetilde\Upsilon_{k}$ is actually fibered over $\widehat X_{k}$ with
a fiber isomorphic to $\mathcal T^{k-1}\times D_{r_{2}}$, where
	       $D_{r_{2}}$ stands for the open disk of radius $r_{2}>0$
centered at the origin. The
observation that $\widehat X_{k}$ is contractible now implies that
$\widetilde\Upsilon_{k}$ is homotopically equivalent to $\mathcal T^{k-1}$.

(ii) The space $\Upsilon_{k}$ consists of
	       all $k$-tuples of complex numbers such there exists a unique
	       number with largest absolute value in the considered
	       $k$-tuple. Let $0<r_{max}$ denote this largest absolute
	       value. Then $\Upsilon_{k}$ is fibered over $\mathbb
	       R^+\simeq \{r_{max}\}$ with a fiber given by the product
$S^1\times Pol_{k-1}(r_{max})$, where $ Pol_{k-1}(r_{max})$ stands for
	       the set of all polynomials of degree $k-1$ whose roots
lie in the open disk of radius $r_{max}$ centered at the origin. Since both
$\mathbb R^+$ and $Pol_{k-1}(r_{max})$ are contractible it follows that
the space $\Upsilon_{k}$ is homotopically equivalent to $S^1$. \qed 
\end{proof}

Lemma~\ref{lm:hom} and the fact that $S^1$ is always unknotted in $Pol_{k}$
for $k\ge 2$ complete the proof of Proposition~\ref{pr:hom}. \qed
\end{proof}


\begin{thebibliography}{99}




\bibitem{BeG}
C.~Berenstein, R.~Gay, Complex variables. An introduction. Grad. Texts in
Math.,  {\bf 125}, Springer-Verlag, New York, 1991.

\bibitem{BSh} 
J.~Borcea, R.~B\o gvad, B.~Shapiro, {\em On rational approximation of algebraic functions}, Adv. Math.
{\bf 204(2)} (2006), 448--480. 

\bibitem{Br}
L.~Bryzgalova, {\em The maximum functions of a family of functions that depend on parameters}, Funct. Anal. Appl. {\bf 12} (1978), 66--67.

\bibitem{CCvA} E.~Coussement, J.~Coussement, W.~Van Assche, {\em Asymptotic zero distribution for a class of multiple orthogonal polynomials.}ÊTrans. Amer. Math. Soc. 360 (2008), no. 10, 5571--5588.

 \bibitem{Fr1}  S. Friedland,
 Invariant measures of groups of homeomorphisms and Auslander's conjecture, {\it J. Ergod. Th. \& Dynam. Sys.} {\bf 15} (1995), 1075--1089.

 \bibitem{Fr2} S. Friedland, Convergence of products of matrices in projective spaces,
 \emph{Linear Alg. Appl.} {\bf 413} (2006), 247--263. 
 
 \bibitem{GH}  P. Griffiths and J. Harris, {\it Principles of Algebraic
 Geometry}, Wiley Interscience, 1978.

\bibitem{GG}
M.~Golubitsky, V.~Guillemin, Stable mappings and their singularities. Grad.
Texts in Math., {\bf 14}, Springer-Verlag, New York-Heidelberg, 1973.


\bibitem{H}
L.~H\"ormander, The analysis of linear partial differential
   operators. I. Distribution theory and Fourier analysis.
   Reprint of the second (1990) edition. Classics in Mathematics.
   Springer-Verlag, Berlin, 2003.

\bibitem{IM}
M.~E.~Ismail, D.~R.~Masson, {\em Generalized orthogonality and continued
fractions}, J. Approx. Theory {\bf 83} (1995), 1--40.

\bibitem {Ko1} R.-J.~Kooman, {\em An asymptotic formula for solutions of linear second-order difference equations with regularly behaving coefficients.} J. Difference Equ. Appl. {\bf 13(11)} (2007), 1037--1049. 

\bibitem{Ko2} R.-J.~Kooman,  {\em Asymptotic behaviour of solutions of linear recurrences and sequences of Mšbius-transformations.} J. Approx. Theory {\bf 93(1)} (1998),  1--58. 

\bibitem{Ko} V.~P.~Kostov, {\em On the stratification and singularities of
Stokes' hypersurface of one- and two-parameter families of polynomials}, in
``Theory of singularities and its applications'',  251--271, Adv. Soviet
Math. {\bf 1}, Amer. Math. Soc., Providence, RI, 1990.


\bibitem{MaNe} A.~M\'at\'e, P.~Nevai, {\em A generalization of Poincar\'e's theorem for recurrence equations.} J. Approx. Theory {\bf 63(1)} (1990),  92--97.

\bibitem{Ma}Matov, V. I. Functions of the extremum of finite families of convex
homogeneous functions. (Russian)  Funktsional. Anal. i Prilozhen.  {\bf 21(1)}
(1987),  51--62.

\bibitem{MaMu} H.~Matsunaga,  S.~Murakami, 
Asymptotic behavior of solutions of functional difference equations. 
J. Math. Anal. Appl. 305 (2005), no. 2, 391--410. 


\bibitem{Pe}ÊO.~Perron, {\em \"Uber Summengleichungen und Poincar\'esche Differenzengleichungen}, Math. Ann. {\bf 84} (1921),  1--15.

\bibitem {Po}
H.~Poincare, {\em  Sur les Equations Lineaires aux Differentielles Ordinaires et aux Differences Finies.}Ê (French) Amer. J. Math. {\bf 7(3)} (1885),  203--258.


\bibitem{Pi} M.~Pituk,  More on Poincar\'eÕs and PerronÕs theorems for difference equations.  J. Difference Equ. Appl. {\bf 8} (2002), no. 3, 201--216.




\bibitem{St}
R.~Stanley, Enumerative Combinatorics, Vol. 1, Wadsworth Brooks/Cole
Math. Ser., The Wadsworth \& Brooks/Cole Advanced Books \& Software,
Monterey, CA, 1986.

\bibitem {Sta} 
H.~Stahl, {\em Spurious poles in P\'ade approximation}, J. Comp. Appl. 
Math. {\bf 99} (1998), 511--527.   


\bibitem{Ts} A.~K.~Tsikh,  Multidimensional residues and their applications. Translated from the 1988 Russian original by E. J. F. Primrose. Translations of Mathematical Monographs, 103. American Mathematical Society, Providence, RI, 1992. x+188 pp.

\bibitem{Va}
V.~Vassiliev, Introduction to topology. Stud. Math. Libr., Vol. 14, Amer.
Math. Soc., Providence, RI, 2001.

\bibitem{Vo} H.~A. van der Vorst, Computational Methods for Large Eigenvalue Problems,  Handbook of numerical analysis, Vol. VIII, 3Ð179, Handb. Numer. Anal., VIII, North-Holland, Amsterdam, 2002.


\bibitem{Zh}
A.~Zhedanov, {\em Biorthogonal rational functions and the generalized eigenvalue problem}, J. Approx. Theory {\bf 101} (1999), 303--329.



\end{thebibliography}
\end{document}